\def\RR{{\bf R}}
\def\barRR{\overline{\bf R}}
\def\ZZ{{\bf Z}}
\def\mGamma{\mathit{\Gamma}}
\numberwithin{equation}{section}
\newtheorem{Thm}{Theorem}[section]
\newtheorem{Prop}[Thm]{Proposition}
\newtheorem{Lem}[Thm]{Lemma}
\theoremstyle{definition}
\newtheorem{Clm}[Thm]{Claim}
\newtheorem{Def}[Thm]{Definition}
\newtheorem{Rem}[Thm]{Remark}
\newtheorem{Ex}[Thm]{Example}
\newenvironment{myitem}{
\refstepcounter{equation}
\\[1.0em] 
$(\thesection.\arabic{equation})$ \quad \quad
\begin{minipage}[t]{12.5cm}}
{\end{minipage}\\[1.0em]\noindent}
\newenvironment{myitem1}{
\refstepcounter{equation}
\\[1.0em]
$(\thesection.\arabic{equation})$  
\begin{minipage}[t]{13.5cm} \begin{itemize}}
{\end{itemize}\end{minipage}\\[1.0em]}
\newcommand{\opt}{\mathop{\rm opt} }
\newcommand{\dom}{\mathop{\rm dom} }
\title{L-extendable functions and a proximity scaling algorithm for minimum cost  multiflow problem}
\author{Hiroshi HIRAI \\
Department of Mathematical Informatics, \\
Graduate School of Information Science and Technology,   \\
University of Tokyo, Tokyo, 113-8656, Japan.\\
\texttt{\normalsize hirai@mist.i.u-tokyo.ac.jp}}
\begin{document}
%\input{cover.tex}
%\METRmakecover
\maketitle
\begin{abstract}
In this paper, we develop a theory of new classes of discrete convex functions, 
called L-extendable functions and alternating L-convex functions, defined on the product of trees. 
We establish basic properties for optimization: 
a local-to-global optimality criterion, the steepest descend algorithm by successive $k$-submodular function minimizations,  the persistency property, and the proximity theorem.
Our theory is motivated by minimum cost free multiflow problem.
To this problem, Goldberg and Karzanov gave two combinatorial weakly 
polynomial time algorithms based on capacity and cost scalings, 
without explicit running time.
As an application of our theory, 
we present a new simple polynomial proximity scaling algorithm 
to solve minimum cost free multiflow problem 
in $O( n  \log (n AC)  {\rm MF}(kn, km))$ time, 
where $n$ is the number of nodes, $m$ is the number of edges, $k$ is the number of terminals, 
$A$ is the maximum of edge-costs, $C$ is the total sum of edge-capacities, and 
${\rm MF}(n',m')$ denotes the time complexity to find a maximum flow in 
a network of $n'$ nodes and $m'$ edges.
Our algorithm is designed to solve,  in the same time complexity,  
a more general class of multiflow problems, 
minimum cost node-demand multiflow problem, and 
is the first combinatorial polynomial time algorithm to this class of problems. 
We also give an application to network design problem.
\end{abstract}
%\thispagestyle{empty}
%\newpage
%\setcounter{page}{1}
\section{Introduction}
An {\em L$^\natural$-convex function} (Favati-Tardella~\cite{FT90}, Murota~\cite{MurotaMPA}, Fujishige-Murota~\cite{FM00}) is 
a function $g$ on integer lattice $\ZZ^n$ satisfying so-called 
{\em discrete midpoint convexity inequality}:
\begin{equation}\label{eqn:midpoint_org}
g(x) + g(y) \geq g(\lfloor (x + y)/2\rfloor) + g(\lceil (x + y)/2\rceil)
\quad (x,y \in \ZZ^n),
\end{equation}
where $\lfloor \cdot \rfloor$ (resp. $\lceil \cdot \rceil$) 
is an operation on $\RR^n$ 
that rounds down (resp. up) the decimal fraction of each component. 
L$^\natural$-convex functions may be viewed as a $\ZZ^n$-generalization of
{\em submodular functions}, and
constitute a fundamental class of discrete convex functions 
in {\em discrete convex analysis} (Murota~\cite{MurotaBook}).
A representative example of L$^\natural$-convex function
 is a function $g$ represented as the following form:
\begin{equation}\label{eqn:typical}
g(x) = \sum_{i} g_i (x_i)  + \sum_{i,j} h_{ij}(x_i - x_j) \quad (x = (x_1,x_2,\ldots,x_n) \in \ZZ^n), 
\end{equation}
where $g_i$ and $h_{ij}$ are one-dimensional convex functions.
The minimization of such a function has both theoretical and practical interests; 
it is the dual of a minimum cost network flow problem, and 
has important applications in computer vision~\cite{KS09}.
Thus theory of L$^\natural$-convex functions provides
a unified treatment for optimizing these important classes of functions.

Let us mention some of particular features of L$^\natural$-convex functions $g$.
(1)~An optimality criterion of a local-to-global type~\cite[{Theorem 7.14}]{MurotaBook}:
For each point $x \in \ZZ^n$, 
a local minimization problem around $x$ is defined, and 
if $x$ is local optimal, then $x$ is global optimal.
Moreover this local minimization problem is a {\em submodular function minimization}, 
and is solvable in polynomial time~\cite{GLS, IFF, Schrijver}.
In particular,   if $x$ is not local optimal, 
then we can find another point $y$ with $g(y) < g(x)$; 
we naturally obtain $y$ with smallest $g(y)$.
The resulting descent algorithm, 
called the {\em steepest descent algorithm}, 
correctly outputs a global minimizer of $g$~\cite[Section 10.3.1]{MurotaBook}. 
The number of the descent steps is the $l_{\infty}$-distance
between the initial point and global minimizers~\cite{KS09, MurotaShioura14}.
(2) Proximity theorem 
(Iwata-Shigeno~\cite{IS02}, see \cite[Theorem 7.18]{MurotaBook}): 
For a minimizer $x$ over the set $(2\ZZ)^n$ of all even integral vectors, 
there is a global minimizer $y$ in the $l_{\infty}$-ball around $x$ with radius $n$.
This intriguing property is 
the basis of the proximity scaling algorithm for 
L$^\natural$-convex functions~\cite[Section 10.3.2]{MurotaBook}. 

Recently the L-convexity is considered for functions on 
general graph structures other than~$\ZZ^n$.
Observe that $\ZZ$ is naturally identified with the vertex set 
of a directed path (of infinite length), and
a function on $\ZZ^n$ is regarded as a function on the $n$-product of these paths.
Observe that operations $\lceil, \rceil$ and $\lfloor, \rfloor$ 
are definable in a graph-theoretical way.
Hence L$^\natural$-convex functions are well-defined functions on 
the Cartesian product of directed paths. 
Based on this observation, 
Kolmogorov~\cite{Kolmogorov11MFCS} 
considered an analogue of L$^{\natural}$-convex functions defined on
the product of rooted trees,  
called {\em tree-submodular functions}. 
Hirai~\cite{HH13SODA, HH_HJ13} considered 
an analogue of L$^{\natural}$-convex functions on 
a more general structure, a {\em modular complex}, 
which is a structure obtained by gluing of modular lattices.
His motivation comes from the tractability classification of minimum 0-extension problems
and a combinatorial duality theory of multicommodity flows.

In this paper, we continue this line of research.
We introduce the notion of {\em L-extendability} 
for functions on (the vertex set of) the $n$-fold Cartesian product $T^n$ of trees $T$.
This notion is inspired by the idea of 
{\em submodular relaxation}~\cite{GK13, IWY14, Kolmogorov12DAM} 
and related half-integral relaxations of NP-hard problems, 
such as vertex cover and multiway cut.
We first introduce a variation of a tree-submodular function,
called an {\em alternating L-convex function}. 
Alternating L-convex functions are also defined by a variation 
of the discrete midpoint convexity inequality (\ref{eqn:midpoint}), 
and coincide with Fujishige's {\em UJ-convex functions}~\cite{Fujishige14} 
if $T$ is a path and is identified with $\ZZ$.
As an analogue of half-integral integer lattice $(\ZZ/2)^n$, 
we consider the product $(T^*)^{n}$ of 
the {\em edge-subdivision} $T^*$ of $T$.
Then an {\em L-extendable function} is defined as 
a function $g$ on $T^n$
such that there is an alternating L-convex function $\bar g$
on $(T^*)^n$ such that its restriction to $T^n$ is equal to $g$, 
where $\bar g$ is called an {\em L-convex relaxation} of $g$.

The first half of our main contribution is 
to establish basic properties of alternating L-convex functions 
and L-extendable functions.
We show that alternating L-convex functions admit 
an optimality criterion of a local-to-global type.
Here the local problem is 
the problem of minimizing a {\em $k$-submodular function}, 
a generalization of submodular and bisubmodular functions 
introduced by Huber and Kolmogorov~\cite{HK12}.
This optimality criterion is an immediate consequence of the definition, and
may not be precisely new; it is expected from \cite{HH13SODA, Kolmogorov11MFCS}.
We further prove the $l_{\infty}$-geodesic property for 
the steepest descent algorithm for alternating L-convex functions:
The number of the iterations is equal to 
the $l_{\infty}$-distance from the initial point to minimizers.
We establish the proximity theorem 
for L-extendable functions:
Regard $T$ as a bipartite graph with two color classes $B,W$.
For a minimizer $x$ over $B^n$, 
there is a global minimizer of $g$ within the $l_{\infty}$-ball around $x$ with radius $n$.
We prove the persistency property for L-convex relaxations: 
a minimizer of an L-extendable function is obtained 
by {\em rounding} any minimizer of its L-convex relaxation. 
This property is known for the cases of 
bisubmodular and $k$-submodular relaxations~\cite{GK13, IWY14, Kolmogorov12DAM}. 
We introduce a useful special class of L-extendable functions, 
called {\em 2-separable convex functions}, 
as an analogue of a class of functions with form (\ref{eqn:midpoint}).
We give explicit L-convex relaxations for which the steepest descent algorithm
is implementable by a maximum flow algorithm.
In fact, the local problem is a minimization of a special $k$-submodular function, 
which is a sum of (binary) {\em basic $k$-submodular functions},  
introduced by Iwata, Wahlstr\"om, and Yoshida~\cite{IWY14}.
They showed that 
this class of $k$-submodular functions 
can be minimized by maximum flow computation.
Therefore the L-convex relaxation of a $2$-separable convex function 
is efficiently minimized.
For some cases, an optimal solution of this relaxation can easily be rounded to 
a $2$-approximate solution of the original 2-separable convex function. 
This approximation algorithm can be viewed as a generalization of 
the classical $2$-approximation algorithm for multiway cut~\cite{DJPSY94}.

These results are motivated by the design of a new simple polynomial scaling algorithm 
for {\em minimum cost free multiflow problem}, which is
the second half of our main contribution.
For an undirected (integer-)capacitated network with terminal set $S$,
a {\em multiflow} is a pair of 
a set of paths connecting terminals in $S$ and 
its flow-value function satisfying capacity constraints.
A {\em maximum free multiflow} is a multiflow of a maximum total flow-value.
Suppose that each edge has a nonnegative cost. 
The minimum cost free multiflow problem asks 
to find a maximum free multiflow with the minimum total cost.
Karzanov~\cite{Kar79} proved that there always exists a half-integral 
minimum cost maximum free multiflow, and presented a pseudo-polynomial 
time algorithm to find it.
Later he~\cite{Kar94} gave a strongly polynomial time algorithm 
by using a generic polynomial time LP solver 
(the ellipsoid method or the interior point method).
Currently no {\em purely combinatorial} strongly polynomial time algorithm is known.
Goldberg and Karzanov~\cite{GK97} presented two purely combinatorial 
weakly polynomial time algorithms: one of them is based on capacity scaling 
and the other one is based on cost scaling.
However the description and analysis of 
their algorithms (in each scaling phase) are not easy; 
they did not give an explicit polynomial running time.

As an application of the theory of L-extendable functions, 
we present a new simple purely 
combinatorial polynomial time scaling algorithm.
We hope that our algorithm will be a step toward 
the design of a purely combinatorial strongly polynomial time algorithm
for minimum cost multiflow problem.
We formulate a dual of our problem to 
the minimization of a 2-separable L-convex function on the product $T^n$ 
of a subdivided-star $T$, where a {\em subdivided star} 
is a tree obtained from a star by successive edge-subdivisions.
Then we can apply a domain scaling technique.
The scaled problem in each phase 
is again a minimization of a 2-separable L-convex function, and is solved by the steepest descent algorithm implemented by max-flow computations. 
The number of iterations is estimated by the proximity theorem 
the $l_{\infty}$-geodesic property, and the persistency property.
In the last phase, we obtain an optimal dual solution, 
and from this we can construct a desired minimum cost free multiflow.
Our algorithm may be viewed as a multiflow version of
a proximity scaling (or domain scaling) algorithm
for the convex dual of minimum cost flow problem~\cite{AHO04, KS09}, 
and is the first combinatorial algorithm to this problem 
having an explicit polynomial running time.
The total time is $O( n \log (n AC)  {\rm MF}(k n, km))$, where
$n$ is the number of nodes, $m$ is the number of edges, $k$ is the number of terminals,
$A$ is the maximum of an edge-cost, $C$ is the total sum of edge-capacities, 
and ${\rm MF}(n',m')$ is the time complexity of finding a maximum flow
in a network of $n'$ nodes and $m'$ edges.
Our algorithm is designed to solve, in the same time complexity,
 a more general class of multiflow problems, 
{\em minimum cost node-demand multiflow problems}, 
and is the first combinatorial polynomial time algorithm for this class of the problems. 
This multiflow problem arises as 
an LP-relaxation of a class of network design problems, 
called {\em terminal backup problems}~\cite{AK11, BKM13, XAC08INFOCOM}.
Recently Fukunaga~\cite{Fukunaga14}~gave a 4/3-approximation algorithm 
for capacitated terminal backup problem, based on rounding an LP solution 
(obtained by a generic LP-solver). 
Our algorithm gives a practical and combinatorial implementation of his algorithm.

We present results for L-extendable functions 
in Section~\ref{sec:L-extendable} and
results for minimum cost multiflow problem 
in Section~\ref{sec:multiflow}.

\paragraph{Notation.} 
Let $\RR$, $\RR_+$, $\ZZ$, and $\ZZ_+$ 
denote the sets of reals, nonnegative reals, integers, and nonnegative integers, respectively.
Let $\overline{\RR} := \RR \cup \{\infty\}$, 
where $\infty$ is an infinity element and is treated as: 
$x < \infty$ $(x \in \RR)$ and $\infty + x = \infty$ $(x \in \overline{\RR})$.

For a function $f: E \to \barRR$ on a set $E$, 
let $\dom f$ denote the set of elements $x \in E$ with $f(x) \neq \infty$.
For a subset $X \subseteq E$, let $f(X)$ denote $\sum_{x \in X} f(x)$.

For an undirected (directed) graph $G = (V,E)$, 
an edge between nodes $i$ and $j$ (from $i$ to $j$) is denoted by $ij$.
For a node subset $X$, let $\delta X$ denote 
the set of edges $ij$ with $i \in X$ and $j \not \in X$.
For nodes $s,t$, 
an  {\em $(s,t)$-cut} is a node subset $X$ with $s \in X \not \ni t$.
For a node subset $A$, an $(s,A)$-cut is a node subset $X$ with $s \in X \subseteq V \setminus A$.

\section{L-extendable functions}\label{sec:L-extendable}

In this section, we introduce alternating L-convex functions 
and L-extendable functions on the product of trees.
In Section~\ref{subsec:k-submodular}, 
we give preliminary arguments on $k$-submodular functions. 
In Section~\ref{subsec:alternating}, 
we introduce alternating L-convex functions, 
and establish the L-optimality criterion (Theorem~\ref{thm:L-optimality}) and 
the $l_\infty$-geodesic property (Theorem~\ref{thm:bound}) 
of the steepest descent algorithm.
In Section~\ref{subsec:L-extendable}, 
we introduce L-extendable functions, 
and establish the proximity theorem (Theorem~\ref{thm:proximity}) 
and the persistency property (Theorem~\ref{thm:persistency}).
In Section~\ref{subsec:2-separable}, 
we introduce 2-separable convex functions.
We show that the steepest descent algorithm for their L-convex relaxations 
is implementable by minimum cut computations 
(Theorems~\ref{thm:2separable} and \ref{thm:approx}).
Less obvious theorems are proved in Section~\ref{subsec:proof}.

\subsection{Preliminaries on $\pmb k$-submodular function}\label{subsec:k-submodular}
For a nonnegative integer $k$, 
let $S_k$ be a $(k+1)$-element set with specified element $0$.
Define a partial order $\preceq$ on $S_k$ 
by $0 \prec u$ for $u \in S_{k} \setminus \{0\}$ with no other relations.
Then $S_k$ is a meet-semilattice 
of minimum element $0$; in particular meet $\wedge$ exists.
For $u,v \in S_{k}$, 
define $u \sqcup v$ by $u \sqcup v := u \vee v$ if $u,v$ are comparable, 
and $u \sqcup v := 0$ otherwise.
For an $n$-tuple $\pmb{k} = (k_1, k_2,\ldots, k_n)$ of nonnegative integers,  
let $S_{\pmb k}$ denote the direct product $S_{k_1} \times S_{k_2} \times \cdots \times S_{k_n}$ of posets $S_{k_i}$ $(i=1,2,\ldots,n)$.
For $x = (x_1,x_2,\ldots,x_n), y = (y_1,y_2,\ldots,y_n) \in S_{\pmb k}$, 
let $x \wedge y := (x_1 \wedge y_1, x_2 \wedge y_2, \ldots, x_n \wedge y_n)$ 
and $x \sqcup y := (x_1 \sqcup y_1, x_2 \sqcup y_2, \ldots, x_n \sqcup y_n)$. 

\begin{Def}[\cite{HK12}]
$f: S_{\pmb  k} \to \barRR$ is {\em $\pmb k$-submodular} 
if it satisfies
\[
f(x) + f(y) \geq f(x \wedge y) + f(x \sqcup y) \quad (x,y \in S_{\pmb k}).
\]
\end{Def}
The notion of $\pmb k$-submodularity is introduced by Huber-Kolmogorov~\cite{HK12}. 
If $\pmb{k} = (k,k, \ldots,k)$,  
then $\pmb{k}$-submodular functions are particularly called $k$-submodular.
Then $1$-submodular functions are submodular functions, and $2$-submodular functions are bisubmodular functions.
Although both classes of functions can be minimized in polynomial time 
(under the oracle model)~\cite{FI05, GLS, IFF, MF10, Qi88, Schrijver}, 
it is not known 
whether general $\pmb k$-submodular functions 
can be minimized in polynomial time.
Recently, Thapper and \v{Z}ivn\'y~\cite{TZ12FOCS} 
discovered a powerful criterion for solvability of {\em valued CSP}, that is,  
a minimization of a function represented as a sum of functions 
with constant arity ($=$ the number of variables). 
As a consequence of their criterion, 
if $\pmb k$-submodular function $f$ is represented and given as
\[
f(x) = \sum_{i}  f_{i} (x_{i_1}, x_{i_2},\ldots,x_{i_m}) \quad (x = (x_1,x_2,\ldots,x_n) \in S_{\pmb k}),
\]
where each $f_{i}$ is $\pmb k$-submodular and the number $m$ of variables is constant, then 
$f$ can be minimized by solving a certain polynomial size linear program (BLP); 
see also~\cite{KTZ13}. 

We will deal with a further special class of 
$\pmb  k$-submodular functions, considered by~\cite{IWY14}, 
in which this class of $\pmb  k$-submodular functions can be efficiently minimized 
by a maximum flow algorithm.
Let $k,k'$ be nonnegative integers. 
For $a \in S_k$, 
let $\epsilon_a$ and $\theta_a$ be one-dimensional $k$-submodular functions on $S_k$ 
defined by
\begin{equation*}
\epsilon_a (u) :=
\left\{ 
\begin{array}{ll}
1 & {\rm if}\ u = a \neq 0, \\
0 & {\rm otherwise},
\end{array} \right. \\
\quad 
\theta_a(u) := \left\{ 
\begin{array}{ll}
-1 & {\rm if}\ u = a \neq 0, \\
0 & {\rm if}\  u= 0,\\
1 & {\rm otherwise}.
\end{array} \right.
\quad (u \in S_{k}).
\end{equation*}
It is not difficult to see that 
any one-dimensional $k$-submodular function $f$
is a nonnegative sum of $\epsilon_{a}$ and 
$\theta_a$ plus a constant:
\begin{equation}\label{eqn:unary}
f = f(0) + (f(0) - f(a)) \theta_a + \sum_{b \in S_k \setminus \{ 0, a\}} (f(b) - 2 f(0) + f(a))\epsilon_b,
\end{equation}
where $a$ is a minimizer of $f$ over $S_k$.
For $(a,a') \in S_{k} \times S_{k'}$, 
let $\mu_{a,a'}$ be  a function  on $S_k \times S_{k'}$ defined by
\begin{equation*}
\mu_{a,a'}(u,v) := \left\{
\begin{array}{ll}
0   & {\rm if} \ \mbox{$u = a \neq 0$ or $v = a' \neq 0$ or $u = v = 0$}, \\
1  & {\rm if}\ \mbox{$v = 0 \neq u \neq a$ or 
 $u= 0 \neq v \neq a'$},\\
2   & {\rm otherwise},
\end{array}\right. \quad ((u,v) \in S_k \times S_{k'}).
\end{equation*}

For a poset isomorphism $\sigma: S_k  \to S_{k'}$ 
(a bijection from $S_{k}$ to $S_{k'}$ with $\sigma(0) = 0$), 
let $\delta_{\sigma}$ be a function on $S_k \times S_{k'}$ defined by
\begin{equation*}
\delta_{\sigma}(u,v) := \left\{
\begin{array}{ll}
0  &{\rm if} \ v = \sigma (u), \\
1  & {\rm if}\ |\{u,v\} \cap \{0\}| = 1,   \\
2   & {\rm otherwise},
\end{array}\right. \quad ((u,v) \in S_k \times S_{k'}).
\end{equation*}
If $\sigma$ is the identity map ${\rm id}$, then $\delta_{\rm id}$ 
is denoted by $\delta$.

Observe that both $\mu_{a,a'}$ and $\delta_{\sigma}$ are $\pmb k$-submodular.
A (binary) {\em basic $\pmb k$-submodular function} $f$ on $S_{\pmb k}$ 
for $\pmb{k}= (k_1,k_2,\ldots, k_n)$
is a function represented as
\begin{description}
\item[Type I:] $f(x) =  f' (x_i)$  for some $i$ and one-dimensional $k$-submodular function $f' :S_{k_i} \to \overline\RR$,
\item[Type II:] $f(x) = \delta_{\sigma}(x_i,x_j)$
for distinct $i,j$ and an isomorphism $\sigma: S_{k_i} \to S_{k_j}$, or
\item[Type III:] $f(x) = \mu_{a,a'}(x_i, x_j)$ for distinct $i,j$ and $(a,a') \in S_{k_i} \times S_{k_j}$.
\end{description}
Iwata, Wahlstr\"om, and Yoshida~\cite{IWY14} showed
that a sum of basic $k$-submodular function 
can be efficiently minimized by any maximum flow algorithm.
\begin{Thm}[{\cite[Section 6]{IWY14}}]\label{thm:IWY14}
A nonnegative sum of $m$ basic $\pmb{k}$-submodular functions for $\pmb{k} = (k_1,k_2,\ldots,k_n)$ can be minimized 
in $O ({\rm MF}(k n, km))$ time, 
where $k := \max k_i$.
\end{Thm}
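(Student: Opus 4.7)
The plan is to reduce the minimization to a single minimum $s$-$t$ cut computation in an auxiliary network of $O(kn)$ nodes and $O(km)$ edges. Each variable $x_i\in S_{k_i}$ is represented by a gadget of $k_i$ internal nodes lying between a source $s$ and a sink $t$, one node per nonzero element of $S_{k_i}$. An $s$-$t$ cut partitions these gadget nodes, and such a partition is to be decoded as an assignment of a single value to $x_i$ (with $x_i=0$ corresponding to the ``all-$s$-side'' configuration, say); partitions that do not decode to a single element of $S_{k_i}$ are ruled out by auxiliary edges of very large capacity.

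Next I would realize each basic term as a small pattern of capacitated edges. For a Type~I term I first apply the decomposition~(\ref{eqn:unary}) of the unary $k$-submodular function into a constant plus nonnegative multiples of $\theta_a$ and $\epsilon_b$; each of $\theta_a$ and $\epsilon_b$ translates into $O(k)$ edges between $s$, $t$, and the $k_i$ gadget nodes of variable $i$. For a Type~II term $\delta_\sigma(x_i,x_j)$ I link the gadget node of value $a$ in variable $i$ to that of $\sigma(a)$ in variable $j$ and arrange the edges so that the ``cost~$1$'' and ``cost~$2$'' levels of $\delta_\sigma$ appear exactly as the cut pays when the decoded assignments disagree in the prescribed way. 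A Type~III term $\mu_{a,a'}$ is encoded analogously, but with the zero-cost configuration pinned at the distinguished pair $(a,a')$ rather than at the diagonal. Each basic function contributes $O(k)$ edges, giving $O(km)$ edges overall.

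Correctness then splits into two parts: (i)~every valid assignment $x\in S_{\pmb k}$ induces a cut of capacity $f(x)+C$ for a fixed constant $C$, a direct summand-by-summand check using the explicit edge capacities; and (ii)~every finite-capacity cut decodes to some valid assignment, so that the minimum cut cannot be smaller than $\min f + C$. Part~(ii) is the main obstacle: one must verify that the gadget, the forbidding edges, and the penalty patterns of Types~II and III together prevent ``mixed'' cut configurations from strictly undercutting valid ones. The natural tool is an uncrossing argument driven by the defining inequality $f(x)+f(y)\geq f(x\wedge y)+f(x\sqcup y)$ applied summand by summand, showing that any invalid cut can be replaced, without increasing capacity, by a cut that decodes to the meet or $\sqcup$ of two candidate assignments.

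Finally, applying any max-flow algorithm to this network of $O(kn)$ nodes and $O(km)$ edges recovers a minimum cut, and hence a minimizer of the given sum, in $O({\rm MF}(kn,km))$ time. I expect the term-by-term verification in~(i) to be mechanical but tedious; the conceptual heart of the argument lies in designing the forbidding edges and carrying out the uncrossing step of~(ii) uniformly across the three basic types.
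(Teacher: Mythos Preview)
Your high-level plan---one gadget node per nonzero label, $O(k)$ edges per basic term, then a single min-cut---matches the paper's, but the mechanism you propose for part~(ii) does not work as stated and differs from what the paper actually does.

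The problematic step is ``partitions that do not decode to a single element of $S_{k_i}$ are ruled out by auxiliary edges of very large capacity.'' A validity constraint of the form ``at most one of the nodes $v_i^1,\ldots,v_i^{k_i}$ lies on the source side'' is not a cut constraint: there is no way to place directed edges among these nodes (or between them and $s,t$) so that every configuration with two or more of them on the $s$-side has infinite cut capacity while every configuration with exactly one of them on the $s$-side has finite cut capacity. Any edge you add to penalise the former will also be cut by some of the latter. Your fallback uncrossing argument does not rescue this: the inequality $f(x)+f(y)\ge f(x\wedge y)+f(x\sqcup y)$ speaks about $f$-values, but the identity $f(x)=c(\delta\phi(x))+K$ that links $f$ to cut capacities only holds for \emph{legal} cuts, so you cannot use it to bound the capacity of an illegal cut from below.

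The paper's route around this is not to forbid illegal cuts at all. Instead, for every $(s,t)$-cut $X$ define its \emph{legalisation} $\check X$ by deleting $X\cap U_i$ whenever $|X\cap U_i|\ge 2$. The gadgets for $\epsilon_a$, $\theta_a$, $\delta_\sigma$, $\mu_{a,a'}$ are chosen so that each of them individually satisfies $c(\delta\check X)\le c(\delta X)$; this is a direct edge-by-edge check, with no appeal to $\pmb k$-submodularity. Since $\check X\subseteq X$, the unique inclusion-minimal minimum $(s,t)$-cut is then automatically legal, and one reads off the minimiser from it. So the ``conceptual heart'' you identified is resolved not by uncrossing but by verifying this monotonicity-under-legalisation property for each of the four basic gadgets.
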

Their algorithm is sketched as follows.
Suppose that $S_{k_i} =\{0,1,2,\ldots,k_i\}$ for $1 \leq i \leq n$.
We consider a directed  network ${\cal N}$
with vertex set $U$, edge set $A$, and edge capacity $c: A \to \RR_+$, 
where $U$ consists of source $s$, sink $t$, and 
$v_i^{1}, v_i^{2},\ldots,v_{i}^{k_i}$ $(1 \leq i \leq n)$.
Let $U_i := \{v_i^{1},v_i^{2},\ldots,v_{i}^{k_i}\}$.
%
%The cut capacity $c(\delta X)$ is the sum of $c(e)$ for all edges $e$ leaving $X$.
%
A {\em legal cut} is an $(s,t)$-cut $X$ such that $|X \cap U_i| \leq 1$ for $1 \leq i \leq n$.
There is a natural bijection $\phi$ from $S_{\pmb k}$ to the set of legal cuts, where $\phi$ is given by
\[
\phi(x) := \{s\} \cup \{ v_i^{x_i} \mid 1 \leq i \leq n, x_i \neq 0\} \quad (x = (x_1,x_2,\ldots,x_n) \in S_{\pmb k}).
\]
For an $(s,t)$-cut $X$, let $\check{X}$ denote the legal cut obtained from $X$ 
by deleting $U_i \cap X$ with $|U_i \cap X| \geq 2$ for $i=1,2,\ldots,n$.
We say that network ${\cal N}$ {\em represents} function $f: S_{\pmb k} \to \overline\RR$
if 
\begin{itemize}
\item[(1)] for some constant $K$, it holds
$f(x) = c(\delta \phi(x)) + K$ for every $x \in S_{\pmb k}$, and
\item[(2)] $c(\delta \check{X}) \leq c(\delta X)$ for every $(s,t)$-cut $X$.
\end{itemize}
Suppose that ${\cal N}$ represents $f$.
By (1) and (2), 
the minimum of $f + K$ is equal to the minimum $(s,t)$-cut capacity, and 
some legal cut is a minimum $(s,t)$-cut.
By $\check{X} \subseteq X$ and (2), 
any inclusion-minimal minimum $(s,t)$-cut is necessarily a legal cut.
Recall the fundamental fact in network flow theory
that there is a unique inclusion-minimal mincut $X^*$, 
which is equal to the set of vertices reachable from $s$ in the residual network 
for any maximum $(s,t)$-flow.
Then $x^* := \phi^{-1}(X)$ is a minimizer of $f$.
Namely $f$ is minimized by a single max-flow computation on ${\cal N}$.

As shown in \cite{IWY14},
any nonnegative sum of basic $\pmb k$-submodular functions
is represented by some network ${\cal N}$.
Notice that if $f$ is represented by ${\cal N} = (U,A,c)$, then for constants $\alpha > 0$ and $\beta$,
$\alpha f + \beta$ is represented by ${\cal N}' = (U, A, \alpha c)$, and that
if $f$ and $f'$ are represented 
by networks ${\cal N}$ and ${\cal N'}$ (on the same vertex set $U$), respectively, 
then $f + f'$ is represented by the union of ${\cal  N}$ and ${\cal N'}$.
Also notice that a unary $\pmb k$-submodular function $f$ is decomposed 
to $\theta_a$ and $\epsilon_b$ as (\ref{eqn:unary}).
If $f(0) = \infty$, then there is at most one $u \in S_k$ with $u \in \dom f$, 
and the coordinate $x_i$ of term $f(x_i)$ may be fixed to $u$.
So we can assume that $f(0) \in \dom f$.
Therefore it suffices to construct a network for the following cases of $f$: 
(i$_0$) $f(x) = \infty$ if $x_i = u \neq 0$ and $0$ otherwise, 
(i) 
$f(x) = \epsilon_a(x_i)$ $(a \neq 0)$, (ii) $f(x) = \theta_a (x_i)$, 
(iii) $f(x) = \delta_\sigma (x_i, x_j)$, and (iv) $f(x) = \mu_{a,a'}(x_i, x_j)$.

Case (i$_0$): Consider the network consisting of a single edge $v_i^u t$ 
with infinite capacity.

Case (i): Consider the network consisting of a single edge $v_i^{a}t$ 
with unit capacity.

Case (ii): If $a = 0$, then $f$ is the sum of $\epsilon_{a'}$, and reduces to case (i).
Suppose that $a \neq 0$.
Consider the network 
consisting of edges $s v_i^{a}$ and  $v_i^{j}t$ for $j \in \{1,2,\ldots,k_i\} \setminus \{a\}$.

Case (iii): 
Consider the network consisting of
edges joining $v_i^{u}$ and $v_{j}^{\sigma(u)}$ 
in both directions for $u = 1,2,\ldots,k_i (= k_j)$.

Case (iv): If $a = a' = 0$, then $f$ is 
the sum of two unary $k$-submodular functions, and reduces to case (i). 
So suppose $a \neq 0$.
Consider the network consisting of edges 
$v_j^u v_i^{a}$ for $u \in  \{1,2,\ldots,k_j\} \setminus \{a'\}$ 
and $v_i^u v_j^{a'}$ for $u \in  \{1,2,\ldots,k_i\} \setminus \{a\}$ if $a' \neq 0$.

In all four cases, it is easy to check that the network represents $f$, i.e., 
it satisfies (1) and (2).
In particular, each basic $\pmb k$-submodular function is represented by $O(k)$ edges, where $k = \max_i k_i$.
Thus a nonnegative sum of $m$ basic $\pmb k$-submodular functions is 
represented by a network with $O(kn)$ vertices and $O(km)$ edges.
Hence it can be minimized by $O({\rm MF}(kn,km))$ time.

\subsection{Alternating L-convex functions}\label{subsec:alternating}
Here we introduce the notion of 
an alternating L-convex function on the product of trees.
Alternating L-convex functions 
may be viewed as a natural variant of {\it strongly tree-submodular functions} 
due to Kolmogorov~\cite{Kolmogorov11MFCS}; 
see Remark~\ref{rem:midpoint} for a detailed relation.  
In addition, when $\ZZ^n$ is identified with (the vertex set) 
of the $n$-fold product of a path with infinite length, 
alternating L-convex functions are equal to {\em UJ-convex functions} 
considered by Fujishige~\cite{Fujishige14}.
Also alternating L-convex functions constitutes a useful special class of  
{\em L-convex functions on modular complexes}~\cite{HH13SODA, HH_HJ13}, 
in which a modular complex are taken to be the product of zigzag oriented trees.

Let $T$ be a tree. We will use the following convention:
\begin{quote}
{\em The vertex set of a tree $T$ is also denoted by $T$.}
\end{quote}
The exact meaning will always be clear in this context. 
Regard $T$ as a bipartite graph.
Let $B$ and $W$ be 
the color classes of $T$, 
where a vertex in $B$ is called {\em black} 
and a vertex in $W$ is called {\em white}.
Define a partial order $\preceq$ on $T$ by:
$u \prec v$ if $u$ and $v$ are adjacent with $(u,v) \in W \times B$.
Then the resulting poset has no chain of length $2$;
$B$ and $W$ are the sets of maximal and minimal elements, respectively.
Let $d$ denote the path metric of $T$, i.e., $d(u,v)$ 
is the number of edges in the unique path between $u$ and $v$.
For vertices $u, v \in T$, there uniquely exists 
a pair $(a, b)$ of vertices such that $d(u,v) = d(u,a) + d(a, b) + d(b,v)$, 
$d(u,a) = d(b, v)$, and $d(a,b) \leq 1$.
In particular, $a = b$ or $a$ and $b$ are adjacent.
Define $u \bullet v$ and $u \circ v$ so that
$\{u \bullet v,  u \circ v\} = \{a,b\}$ and $u \circ v \preceq u \bullet v$.
Namely if $a= b$ then $u \bullet v = u \circ v = a = b$. 
Otherwise  $u \bullet v$ and $u \circ v$ are the black and white vertices in $\{a,b\}$, respectively.

Let $n$ be a positive integer. 
We consider the $n$-fold Cartesian product $T^n$ of $T$. 
We will use the $l_{\infty}$-metric on $T^n$, which is also denoted by $d$:
\[
d (x,y) := \max_{1 \leq i \leq n} d(x_i,y_i) \quad (x,y \in T^n).
\]
\begin{Def}
A function $g: T^n \to \barRR$ is 
called {\em alternating L-convex}, or simply, {\em L-convex}
if it satisfies
\begin{equation}\label{eqn:midpoint}
g(x) + g(y) \geq g(x \bullet y) + g( x \circ y) \quad (x,y \in T^n).
\end{equation}
\end{Def}
The defining inequality (\ref{eqn:midpoint}) can be viewed as a variation 
of the discrete midpoint convexity~(\ref{eqn:midpoint_org}); see also Remark~\ref{rem:midpoint}.
As the direct product of posets $T$, 
we regard $T^n$ as a poset, where the partial order is also denoted by $\preceq$.
For $x = (x_1,x_2,\ldots,x_n) \in T^n$, the principal ideal 
${\cal I}(x) := \{ y \in T^n \mid y \preceq x\}$ 
and filter ${\cal F}(x) := \{ y \in T^n \mid y \succeq x\}$ of $x$ are given as follows:
\begin{eqnarray*}
{\cal I}(x) & = & {\cal I}(x_1) \times {\cal I}(x_2) \times \cdots \times {\cal I}(x_n), \\
{\cal F}(x) & = &   {\cal F}(x_1) \times {\cal F}(x_2) \times \cdots \times {\cal F}(x_n), \\
{\cal I}(x_i) & = &
\left\{ 
\begin{array}{ll}
\{x_i\} \cup \{ \mbox{all neighbors of $x_i$} \} & {\rm if}\ x_i \in B, \\
\{x_i\} & {\rm if}\ x_i \in W,
\end{array} \right. \\
{\cal F}(x_i) & = &
\left\{ 
\begin{array}{ll}
\{x_i\} \cup \{ \mbox{all neighbors of $x_i$} \} & {\rm if}\ x_i \in W, \\
\{x_i\} & {\rm if}\ x_i \in B.
\end{array} \right. 
\end{eqnarray*}
Regard  ${\cal I}(x_i)$ as poset $S_{k_i}$ with $k_i := |{\cal I}(x_i)| - 1$ and the minimum element $x_i$, 
and also regard ${\cal F}(x_i)$ as poset $S_{k'_i}$ with $k'_i := |{\cal F}(x_i)|-1$ and the minimum element $x_i$.
Then ${\cal I}(x) \simeq S_{\pmb k}$ for $\pmb{k} = (k_1,k_2,\ldots,k_n)$ 
and ${\cal F}(x) \simeq S_{\pmb k}$ for $\pmb{k} = (k'_1,k'_2,\ldots,k'_n)$.
By this correspondence, 
it is easy to see that $\bullet = \sqcup$ and $\circ = \wedge$ on ${\cal F}(x)$, 
and that $\circ = \sqcup$ and $\bullet = \wedge$ on ${\cal I}(x)$.
Therefore we have:
\begin{Lem}\label{lem:locally_k_submo}
For each $x \in T^n$, 
an L-convex function
$g$ is $\pmb k$-submodular on ${\cal I}(x)$ and on ${\cal F}(x)$.
\end{Lem}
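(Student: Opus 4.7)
The plan is to deduce $\pmb k$-submodularity from the midpoint convexity inequality (\ref{eqn:midpoint}) by checking the identifications of operations $(\bullet,\circ)\leftrightarrow(\wedge,\sqcup)$ that are already asserted in the paragraph preceding the lemma. Once this is in place, $\pmb k$-submodularity on ${\cal I}(x)$ and on ${\cal F}(x)$ is just the L-convex inequality read through the corresponding isomorphisms.

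First I would observe that $\bullet$ and $\circ$ on $T^n$ are coordinate-wise operations (as is implicit in their use in (\ref{eqn:midpoint})), and that $\wedge$ and $\sqcup$ on $S_{\pmb k}$ are coordinate-wise by construction. It therefore suffices to verify, for each fixed index $i$, that $\bullet=\wedge$ and $\circ=\sqcup$ on ${\cal I}(x_i)$, and that $\bullet=\sqcup$ and $\circ=\wedge$ on ${\cal F}(x_i)$. The only non-trivial case is when $x_i$ is in the black class for ${\cal I}(x_i)$ (resp.\ white class for ${\cal F}(x_i)$), since otherwise the set collapses to $\{x_i\}$.

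Second, I would verify the single-coordinate identification on ${\cal I}(x_i)$ for $x_i\in B$ by a short case split on $y,z\in {\cal I}(x_i)$. Under the identification $x_i\leftrightarrow 0$ in $S_{k_i}$: (a) if $y=z$, all four operations return $y$; (b) if $y=x_i$ and $z$ is a white neighbor, then $d(y,z)=1$ forces the distinguished pair $(a,b)$ in the definition of $\bullet,\circ$ to be $(x_i,z)$, giving $y\bullet z=x_i$ (the black endpoint) and $y\circ z=z$, matching $y\wedge z=0$ and $y\sqcup z=z$ in $S_{k_i}$; (c) if $y,z$ are distinct white neighbors of $x_i$, then $d(y,z)=2$ and the unique midpoint in the tree is $x_i$, so $y\bullet z=y\circ z=x_i$, matching $y\wedge z=y\sqcup z=0$ since $y,z$ are incomparable in $S_{k_i}$. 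The argument for ${\cal F}(x_i)$ with $x_i\in W$ is completely symmetric and yields $\bullet=\sqcup$, $\circ=\wedge$.

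Combining coordinate-wise, for any $y,z\in {\cal I}(x)$ the inequality (\ref{eqn:midpoint}) becomes $g(y)+g(z)\geq g(y\wedge z)+g(y\sqcup z)$ in $S_{\pmb k}$, which is the defining inequality of $\pmb k$-submodularity; on ${\cal F}(x)$ the same inequality is obtained with the roles of $\wedge$ and $\sqcup$ swapped on the right-hand side, so it is again $\pmb k$-submodularity. There is no real obstacle; the only point requiring care is that on ${\cal I}(x_i)$ with $x_i\in B$, the $T$-order makes $x_i$ maximal whereas in $S_{k_i}$ it is identified with the minimum $0$, so one must not confuse the two orientations when matching operations.
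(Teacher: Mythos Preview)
Your proposal is correct and follows exactly the approach indicated in the paper. The paper itself gives no formal proof: it simply records the identifications $\bullet=\sqcup$, $\circ=\wedge$ on ${\cal F}(x)$ and $\bullet=\wedge$, $\circ=\sqcup$ on ${\cal I}(x)$ in the paragraph preceding the lemma and then states the lemma as an immediate consequence; you have supplied the coordinate-wise case check that justifies those identifications.
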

In particular, in the case where $T$ is a star with $k$ leaves and center $x_0 \in W$, 
under the correspondence $T^n = {\cal F}(x_0)^n \simeq {S_k}^n$, 
L-convex functions and $k$-submodular functions are the same.

The following is an analogue of the L-optimality criterion of 
L$^{\natural}$-convex functions~\cite[Theorem 7.5]{MurotaBook} 
(and strongly-tree submodular function~\cite{Kolmogorov11MFCS}):
\begin{Thm}[L-optimality]\label{thm:L-optimality} 
Let $g$ be an L-convex function on $T^n$.
If $x \in \dom g$ is not a minimizer of $g$, then there exists $x' \in {\cal I}(x) \cup {\cal F}(x)$ 
with $g(x') < g(x)$.
\end{Thm}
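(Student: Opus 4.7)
The plan is to prove, by induction on the $l_\infty$-distance $d(x,y)$, the following slightly stronger statement: if $y \in T^n$ satisfies $g(y) < g(x)$, then some $x' \in {\cal I}(x) \cup {\cal F}(x)$ satisfies $g(x') < g(x)$. Such a $y$ exists since $x$ is not a minimizer of $g$.

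For the inductive step $d(x,y) \geq 2$, I would apply the midpoint inequality (\ref{eqn:midpoint}): since $g(x \bullet y) + g(x \circ y) \leq g(x) + g(y) < 2 g(x)$, at least one of $x \bullet y, x \circ y$, call it $y'$, satisfies $g(y') < g(x)$. Because $\bullet$ and $\circ$ are defined as the middle pair of the $x_i$--$y_i$ geodesic in the tree $T$, each coordinate of $y'$ lies within distance $\lceil d(x_i, y_i)/2 \rceil$ of $x_i$, so $d(x, y') \leq \lceil d(x,y)/2 \rceil < d(x,y)$, and the induction hypothesis applies to $y'$.

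The base case is $d(x, y) \leq 1$ (with $y \neq x$), which is where the bipartite poset structure on $T$ intervenes. I would verify coordinate by coordinate that $x \circ y \in {\cal I}(x)$ and $x \bullet y \in {\cal F}(x)$: when $d(x_i, y_i) = 0$ both midpoint coordinates equal $x_i$; when $d(x_i, y_i) = 1$ the pair $\{x_i \bullet y_i,\, x_i \circ y_i\}$ equals $\{x_i, y_i\}$, and since $x_i$ and $y_i$ are adjacent in the bipartite tree $T$ they have opposite colours, whence the convention that $\bullet$ picks the black and $\circ$ picks the white vertex forces $x_i \circ y_i \preceq x_i \preceq x_i \bullet y_i$ regardless of the colour of $x_i$. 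One more application of (\ref{eqn:midpoint}) gives $g(x \bullet y) + g(x \circ y) < 2 g(x)$; since some coordinate has $d(x_i, y_i) = 1$, at least one of the midpoints differs from $x$, and the inequality then forces at least one of them to have $g$-value strictly less than $g(x)$, supplying the required $x'$.

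The main obstacle is the base-case identification of $x \bullet y$ and $x \circ y$ as elements of ${\cal I}(x)$ and ${\cal F}(x)$ through the bipartite colour analysis of single-edge moves; once that is in hand, everything else is a direct combination of induction and the midpoint inequality.
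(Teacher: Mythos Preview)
Your proof is correct and essentially the same as the paper's: the paper phrases the argument as ``take $y$ with $g(y)<g(x)$ of minimum $l_\infty$-distance,'' which is equivalent to your induction on $d(x,y)$, and then uses exactly your midpoint-inequality step together with the observation that $x\circ y\in\mathcal{I}(x)$ and $x\bullet y\in\mathcal{F}(x)$ when $d(x,y)\leq 1$. Your base-case colour analysis simply spells out what the paper states in one line.
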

\begin{proof}
There is $y$ with $g(y) < g(x)$.
Take such $y$ with $d(x, y)$ minimum.
By inequality (\ref{eqn:midpoint}), 
we have $2 g(x) > g(x) + g(y) \geq g(x \bullet y) + g(x \circ y)$.
Thus $g(x \bullet y) < g(x)$ or $g(x \circ y) < g(x)$ holds.
By the minimality, 
it must hold $d(x, y) \leq 1$ (since
$d(y, x \bullet y) \leq \lceil d(x,y)/2 \rceil$ 
and $d(y, x \circ y) \leq \lceil d(x,y)/2 \rceil$). 
Then $x \bullet y \in {\cal F}(x)$, $x \circ y \in {\cal I}(x)$, and the claim holds.
\end{proof}

This directly implies the following descent algorithm, 
which is an analogue of the steepest descent algorithm of 
L$^\natural$-convex functions~\cite[Section 10.3.1]{MurotaBook}.
\begin{description}
\item[Steepest descent algorithm:]
\item[Input:] An L-convex function $g$ and 
a vertex $x \in \dom g$.
\item[Step 1:] Let $y$ be a minimizer of $g$ 
over ${\cal I}(x)\cup {\cal F}(x)$.
\item[Step 2:] If $g(x) = g(y)$, then stop. Otherwise $x := y$, and go to step 1.
\end{description}
By Theorem~\ref{thm:L-optimality}, 
if the algorithm terminates in step 2, then $x$ is a minimizer of $g$.
Also the step 1 is conducted by minimizing $g$ over ${\cal I}(x)$ and $g$ over ${\cal F}(x)$.
%\begin{eqnarray*}
%&& \mbox{Min.  $g(y)$  s.t.   $y \in {\cal I}(x)$,} \\
%&& \mbox{Min.  $g(y)$  s.t.   $y \in {\cal F}(x)$.}
%\end{eqnarray*}
%
By Lemma~\ref{lem:locally_k_submo}, $g$ is $\pmb k$-submodular on ${\cal I}(x)$ and on ${\cal F}(x)$, 
and step~1 can be conducted by $\pmb k$-submodular function minimization.
Therefore, if $g$ is represented as 
a sum of $\pmb k$-submodular functions of bounded arity, 
then each iteration is conducted in polynomial time.
To obtain a complexity bound,
we need to estimate the total number of iterations.
In the case of L$^{\natural}$-convex functions,
the number of iterations is bounded by the constant of 
the $l_{\infty}$-diameter of the effective domain~\cite{KS09}.
A recent analysis~\cite{MurotaShioura14} showed that
the number of iterations is {\em exactly} equal to
the minimum of a certain directed analogue of $l_{\infty}$-distance 
from the initial point $x$ to the set of minimizers. 

We will establish an analogous result for our L-convex function.
Let ${\rm opt} (g)$ denote the set of all minimizers of $g$.
Obviously the total number of the iterations  
is at least the minimum $l_{\infty}$-distance $d({\rm opt}(g),x) 
:= \min \{ d(y,x) \mid y \in {\rm opt}(g) \}$
from $x$ to ${\rm opt} (g)$.
This lower bound is almost tight, as follows.
\begin{Thm}\label{thm:bound}
For an L-convex function $g$ on $T^n$ and a vertex $x \in \dom g$,
the total number $m$ of the iterations of the steepest descent algorithm 
applied to $(g, x)$ is at most $d({\rm opt}(g), x) + 2$.
If $g(x) = \min_{y \in {\cal F}(x)} g(y)$ or $g(x) = \min_{y \in {\cal I}(x)} g(y)$, 
then $m = d({\rm opt}(g), x)$.
\end{Thm}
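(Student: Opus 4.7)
The plan is to first establish the sharper second statement by induction on $D := d(\opt(g), x)$, and then deduce the general $D + 2$ bound by a reduction argument. A crucial observation underpinning both parts is the following monotonicity: if the first step of the algorithm lands at $x_1 \in \cal F(x_0)$ (a ``pure up'' step, which must occur if $x_1 \neq x_0$ is not in $\cal I(x_0)$, since $\cal F(x_0) \cap \cal I(x_0) = \{x_0\}$), then $\cal F(x_1) \subseteq \cal F(x_0)$ gives $\min_{\cal F(x_1)} g \geq \min_{\cal F(x_0)} g = g(x_1)$, so $x_1$ is itself a local minimum over $\cal F(x_1)$. The symmetric statement holds if $x_1 \in \cal I(x_0)$.

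To prove the second statement, say under the hypothesis $g(x_0) = \min_{\cal I(x_0)} g$, I would induct on $D$. The base case $D = 0$ is immediate, since then $x_0 \in \opt(g)$ and the algorithm stops with $m = 0$. For $D \geq 1$, Theorem~\ref{thm:L-optimality} forces improvement in $\cal F(x_0) \cup \cal I(x_0)$; since the $\cal I$-side gives no improvement by hypothesis, the step must go into $\cal F(x_0)$, and by the monotonicity above, the new iterate $x_1$ satisfies the symmetric hypothesis (with $\cal F$ in place of $\cal I$). The induction then reduces the problem to the key claim $d(\opt(g), x_1) = D - 1$, whose lower bound is the triangle inequality. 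For the upper bound I would fix $y \in \opt(g)$ with $d(x_0, y) = D$ and apply the L-convexity inequality $g(x_1) + g(y) \geq g(x_1 \bullet y) + g(x_1 \circ y)$ together with $g(x_1) = \min_{\cal F(x_0)} g$; in the univariate case $n = 1$ this dichotomizes cleanly: either $x_1$ is the neighbor of $x_0$ on the unique $x_0$-to-$y$ path (and then $d(x_1, y) = D - 1$), or else an appeal to L-convexity together with the fact that the midpoint $x_1 \bullet y$ lies in $\cal F(x_0)$ forces $g(x_1) \leq g(y) = \min g$, so $x_1$ itself is a global minimizer.

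The first statement then follows quickly. If $x_0$ already satisfies the local-minimum hypothesis of the second statement, the claim is immediate. Otherwise the first step yields $x_1 \neq x_0$ lying purely in $\cal F(x_0)$ or $\cal I(x_0)$, and by monotonicity $x_1$ does satisfy the hypothesis of the second statement; applying that statement to $x_1$ and using $d(\opt(g), x_1) \leq D + 1$ (triangle inequality) gives $m \leq 1 + (D + 1) = D + 2$.

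The main obstacle is the coordinate-wise extension of the univariate key claim: in $T^n$, whether $(x_1 \bullet y)_i$ and $(x_1 \circ y)_i$ lie in $\cal F((x_0)_i)$ depends on the bipartite coloring of $(x_0)_i$ and on the parity of $d((x_0)_i, y_i)$, so the clean ``midpoint-in-$\cal F(x_0)$'' argument from the univariate case does not transfer directly. I anticipate this will require choosing $y$ among the minimizers at distance $D$ in a canonical way (using the closure of $\opt(g)$ under $\bullet$ and $\circ$) together with a careful coordinate-wise comparison, leveraging the $\pmb{k}$-submodular structure of $g$ on $\cal F(x_0)$ from Lemma~\ref{lem:locally_k_submo} to control which of $x_1 \bullet y$ or $x_1 \circ y$ lies in $\cal F(x_0)$, and hence has $g$-value at least $g(x_1)$.
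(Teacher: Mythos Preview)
Your overall architecture is sound and matches the paper: prove the second (sharper) statement first, use the zigzag monotonicity $\mathcal{F}(x_1)\subseteq\mathcal{F}(x_0)$ to propagate the local-minimum hypothesis, and deduce the $+2$ bound from the sharper one via the triangle inequality. That part is fine.

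The genuine gap is the central claim $d(\opt(g),x_1)=D-1$. Your univariate sketch does not establish it as written: for $D\geq 4$ the point $x_1\bullet y$ can lie in $\mathcal{F}(x_0)$ (giving $g(x_1)\leq g(x_1\bullet y)$) while $x_1\circ y$ lies two or more steps away from $x_0$, so the single midpoint inequality $g(x_1)+g(y)\geq g(x_1\bullet y)+g(x_1\circ y)$ only yields $g(y)\geq g(x_1\circ y)$, which is not the desired contradiction. (In one dimension the claim is still true, but you need tree-convexity along the whole path $x_1$--$x_0$--$\cdots$--$y$, not a single midpoint step.) For $n\geq 2$ you yourself flag this as the ``main obstacle'' and leave it open; the difficulty is real, because $\bullet$ and $\circ$ mix color classes coordinatewise and neither $x_1\bullet y$ nor $x_1\circ y$ need lie in $\mathcal{F}(x_0)$ or $\mathcal{I}(x_0)$.

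The paper does not prove your inductive claim directly. Instead it proves the stronger statement that \emph{every} improving direction $z$ at iteration $k$ satisfies $d(x_0,z)=k+1$ (its Lemma~2.10). The key tool is not the symmetric midpoint inequality but a refined ${\pmb k}$-submodular inequality on $\mathcal{F}(x^k)$ (the paper's Lemma~2.9),
\[
f(z)+f(x^{k-1})\ \geq\ f(z\wedge x^{k-1})+\tfrac12 f\bigl(z\sqcup(z\sqcup x^{k-1})\bigr)+\tfrac12 f\bigl((z\sqcup x^{k-1})\sqcup x^{k-1}\bigr),
\]
together with the asymmetric operation $(z,w)\mapsto z\sqcup(z\sqcup w)$, which has the crucial properties $z\sqcup(z\sqcup w)\succeq z$ and ``inherits nonzero coordinates from $w$.'' These let one compare $z$ against $x^{k-1}$ and $x^{k-2}$ simultaneously and extract a single coordinate where the distance must jump by one. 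Once this lemma is in hand, the paper finishes with a restriction trick: replace $g$ by $\tilde g$ that is $+\infty$ outside the $l_\infty$-ball of radius $m^*:=d(\opt(g),x)$; $\tilde g$ is still L-convex, the first $m^*$ steps are unchanged, and by the lemma the algorithm can never leave that ball, so it must terminate at step $m^*$.

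So the missing ingredient in your plan is precisely this refined inequality and the $z\sqcup(z\sqcup w)$ operation; the symmetric $\bullet,\circ$ operations you reach for are not sharp enough to localize the comparison inside $\mathcal{F}(x_0)$.
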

In the case where $x \in B^n$ or $W^n$, 
the condition $g(x) = \min_{y \in {\cal F}(x)} g(y)$ or $g(x) = \min_{y \in {\cal I}(x)} g(y)$
is automatically satisfied.
In fact, \cite{HH_HJ13} announced (a slightly weaker version of) 
this result for general L-convex functions 
on modular complexes. However, the proof needs a deep geometric investigation 
on modular complex, and will be given in a future paper~\cite{HHprepar}.
We give a self-contained proof of 
Theorem~\ref{thm:bound} in Section~\ref{subsec:bound}. 

\subsection{L-extendable functions}\label{subsec:L-extendable} 
Next we introduce the notion of the L-extendability.
This notion was inspired by the idea of {\em ($k$-)submodular relaxation} 
used in~\cite{GK13,  IWY14, Kolmogorov12DAM}.
\begin{Def}
A function $h: B^n \to \barRR$ is called {\em L-extendable} if 
there exists an  L-convex function $g$ on $T^n$ such 
that the restriction of $g$ to $B^n$ is equal to $h$.
\end{Def}
We also define the L-extendability of a function on $T^n$ 
via the edge-subdivision.
The {\em edge-subdivision} $T^*$ of $T$ 
is obtained by adding a new vertex $w_{uv}$ 
for each edge $e = uv$, and 
replacing $e$ in $T$ by two edges $uw_{uv}, w_{uv} v$.
The new vertex $w_{uv}$ is called the {\em midpoint} of $uv$.
The original $T$ is a subset of $T^*$ and is 
one of the color classes of $T^*$.
In $T^*$, vertices in $T$ are supposed to be black and 
midpoints are supposed to be white.
\begin{Def}
A function $h: T^n \to \barRR$ is called {\em midpoint L-extendable}, 
or simply, {\em L-extendable} 
if there exists an  L-convex function $g$ on $(T^*)^n$ such 
that the restriction of $g$ to $T^n$ is equal to $h$.
\end{Def}
We call $g$ an {\em L-convex relaxation} of $h$.
If $\min_{x \in T^n} g(x) = \min_{x \in (T^*)^n} h(x)$, 
then $g$ is called an {\em exact} L-convex relaxation.
In fact, any L-convex function admits an exact L-convex relaxation, 
and is (midpoint) L-extendable~\cite{HHprepar}; 
see Remark~\ref{rem:midpoint} for related arguments.
We will see that vertex-cover problem and multiway cut problem 
admits $k$-submodular relaxation (L-convex relaxation in our sense).
This means that it is NP-hard to minimize L-extendable functions in general.
However L-extendable functions have several useful properties.
The main results in this section are following three properties of L-extendable functions.
These properties will play crucial roles in the proximity scaling algorithm 
for minimum cost multiflow problem in Section~\ref{sec:multiflow}. 
Proofs of the three theorems are given in Section~\ref{subsec:proof}.

The first property is an optimality criterion analogous to Theorem~\ref{thm:L-optimality}:
\begin{Thm}[Optimality]\label{thm:optimality_h}
Let $h: B^n \to \barRR$ be an L-extendable function.
For $x \in \dom h$, 
if $x$ is not a minimizer of $h$, 
then there exists $y \in B^n$ such that $d(x,y) \leq 2$ 
and $h(y) < h(x)$.
\end{Thm}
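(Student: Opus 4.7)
The plan is to adapt the proof of Theorem~\ref{thm:L-optimality} to the restricted codomain $B^n$. Let $g$ be an L-convex relaxation of $h$, so $g|_{B^n} = h$. First I observe that $x$ is not a minimizer of $g$: otherwise, $h(x) = g(x) \leq g(z) = h(z)$ for every $z \in B^n$, contradicting the hypothesis on $x$. Following the template of Theorem~\ref{thm:L-optimality}, pick $y \in B^n$ with $h(y) < h(x)$ and $d(x, y)$ minimum; it suffices to show $d(x, y) \leq 2$. Note that $d(x, y)$ is automatically even since $x, y \in B^n$.

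Suppose for contradiction $d(x, y) \geq 4$. Applying the midpoint inequality~(\ref{eqn:midpoint}) to $x$ and $y$: because every coordinate distance is even, $x \bullet y = x \circ y = m$, where $m_i$ is the unique coordinate-wise midpoint in $T$. This yields $g(m) \leq (g(x) + g(y))/2 < g(x)$ and $d(x, m) = d(x, y)/2 < d(x, y)$. If $d(x_i, y_i) \equiv 0 \pmod{4}$ for every $i$, then $m \in B^n$ and $y' := m$ directly contradicts the minimality of $y$.

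Otherwise, the coordinates with $d(x_i, y_i) \equiv 2 \pmod{4}$ have $m_i \in W$. To finish the argument, I aim to find $y' \in B^n \cap {\cal F}(m)$ with $g(y') < g(x)$; such a $y'$ lies at $l_{\infty}$-distance at most $d(x, m) + 1 = d(x, y)/2 + 1 < d(x, y)$ from $x$, contradicting the minimality. To produce $y'$, I exploit the $\pmb k$-submodularity of $g$ on ${\cal F}(m)$ given by Lemma~\ref{lem:locally_k_submo}: under the identification of ${\cal F}(m)$ with $S_{\pmb k}$, the subset $B^n \cap {\cal F}(m)$ corresponds precisely to the vectors with every coordinate nonzero. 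Natural candidates $m^-, m^+ \in B^n \cap {\cal F}(m)$ are obtained by shifting each white $m_i$ one step along the path toward $x_i$ or $y_i$, respectively; they satisfy $m^- \bullet m^+ = m^- \circ m^+ = m$, so $g(m^-) + g(m^+) \geq 2 g(m)$.

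The main obstacle is converting these ingredients into the desired $y'$. The inequality for $m^{\pm}$ lower-bounds, rather than upper-bounds, $\min(g(m^-), g(m^+))$, so I will combine it telescopically with midpoint inequalities for pairs such as $(x, m^{\pm})$ and $(y, m^{\mp})$, generalizing the one-dimensional convex-sequence argument in which the $g$-values along an even-length path from $x$ to $y$ form a discretely convex sequence and hence must decrease already at the first step. The subtle point is the nontrivial interaction between coordinates of distinct $d(x_i, y_i)$ under the multidimensional midpoint operation, requiring careful coordinate-wise bookkeeping and, if necessary, an appeal to $\pmb k$-submodular minimization on ${\cal F}(m)$ to select an appropriate rounding of $m$.
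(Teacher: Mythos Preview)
Your proposal has a genuine gap precisely where you flag it. After reaching the midpoint $m$ with $g(m) < g(x)$, you need a point $y' \in B^n \cap {\cal F}(m)$ with $g(y') < g(x)$, but the inequality you derive, $g(m^-) + g(m^+) \geq 2g(m)$, bounds $\min(g(m^-),g(m^+))$ from \emph{below}, not from above; it is compatible with both $g(m^-)$ and $g(m^+)$ exceeding $g(x)$. The ``telescoping'' plan you sketch is not a proof: applying midpoint convexity to pairs such as $(x,m^{\pm})$ produces new mixed-parity points rather than the desired cancellation, and ``an appeal to $\pmb k$-submodular minimization on ${\cal F}(m)$'' does not by itself furnish the required inequality. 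As stated, the argument stops at exactly the nontrivial step.

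The paper's proof is entirely different and avoids the global midpoint altogether. It works locally: choose $z \in {\cal I}(x)$ minimizing $g$ over ${\cal I}(x)$ (note ${\cal F}(x)=\{x\}$ since $x\in B^n$). If $z$ is a global minimizer of $g$, the persistency theorem (Theorem~\ref{thm:persistency}) supplies $y \in B^n \cap {\cal F}(z)$. Otherwise, since ${\cal I}(z) \subseteq {\cal I}(x)$, the next descent step lands at some $w \in {\cal F}(z)$ with $g(w) < g(z)$. Now apply Lemma~\ref{lem:k-submo'} on ${\cal F}(z)$ to the pair $(x,w)$: since $x$ is maximal in ${\cal F}(z)$ one has $x \sqcup (x \sqcup w) = x$, and since $x \wedge w \in {\cal I}(x)$ one has $g(w) < g(z) \leq g(x \wedge w)$; the inequality then forces $g(x) > g(w \sqcup (w \sqcup x))$. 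By property~(\ref{eqn:sqcup})(2) the point $w \sqcup (w \sqcup x)$ has no zero component, hence lies in $B^n \cap {\cal F}(z)$, and $d(x,\cdot) \leq 2$. The key ingredient you are missing is Lemma~\ref{lem:k-submo'}, which manufactures a maximal element (hence a $B^n$-point) with strictly smaller value from the two-step descent data; your midpoint framework does not reach a situation where that lemma applies.
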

The second property is so-called the {\em persistency}.
This notion was introduced by Kolmogorov~\cite{Kolmogorov12DAM} 
for bisubmodular relaxation,  
and was extended to $k$-submodular relaxation~\cite{GK13, IWY14}.
The persistency property says that  from a minimizer $x$ of a relaxation $g$, 
we obtain a minimizer $y$ of $h$ by {\em rounding} each white component of $x$ 
to an adjacent (black) vertex.
\begin{Thm}[{Persistency}]\label{thm:persistency}
Let $h:B^n \to \barRR$ be an L-extendable function  
and $g: T^n \to \barRR$ its L-convex relaxation.
For a minimizer $x$ of $g$,
then there is a minimizer $y$ of $h$ with $y \in {\cal F}(x) \cap B^n$.
\end{Thm}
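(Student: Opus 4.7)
The plan is to start from an arbitrary minimizer $y$ of $h$ and iterate $y^{(0)} := y$, $y^{(k+1)} := x \bullet y^{(k)}$, eventually obtaining a point in $\mathcal{F}(x) \cap B^n$ with $g$-value at most $g(y)$. Two facts would be established.

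\textbf{(a) The $g$-values are non-increasing:} $g(y^{(k+1)}) \leq g(y^{(k)})$. Applying the L-convex inequality
\[
g(x) + g(y^{(k)}) \geq g(x \bullet y^{(k)}) + g(x \circ y^{(k)})
\]
together with $g(x) \leq g(x \circ y^{(k)})$ (since $x$ is a global minimizer of $g$) yields $g(x \bullet y^{(k)}) \leq g(y^{(k)})$ immediately.

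\textbf{(b) The iterates stabilize at a point $y^{(K)} \in \mathcal{F}(x) \cap B^n$.} I would analyze coordinatewise the map $v \mapsto x_i \bullet v$. When $x_i \in W$ and $v \in B$, the distance $d(x_i, v)$ is odd, so $a \neq b$ in the definition of $\bullet$, and $x_i \bullet v$ is necessarily black; moreover $d(x_i, x_i \bullet v) \leq \lceil d(x_i, v)/2 \rceil$, so after $O(\log d(x_i, y_i))$ iterations this coordinate stabilizes at the (unique) black neighbor of $x_i$ lying on the original path from $x_i$ to $y_i$. When $x_i \in B$, the iterates may alternate between $B$ and $W$ according to the residue of $d(x_i, y_i^{(k)}) \pmod 4$, but the distance strictly decreases whenever it is positive, and the sequence converges to $x_i$. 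In either case the stable value lies in $\mathcal{F}(x_i) \cap B$, and choosing $K$ larger than the stabilization time of every coordinate gives $y^{(K)} \in \mathcal{F}(x) \cap B^n$.

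Combining (a) and (b), $g(y^{(K)}) \leq g(y^{(0)}) = g(y) = \min h$; but $y^{(K)} \in B^n$ forces $g(y^{(K)}) = h(y^{(K)}) \geq \min h$, so equality holds and $y^{(K)}$ is the desired minimizer of $h$ in $\mathcal{F}(x) \cap B^n$.

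The main obstacle is the parity analysis in (b) for coordinates with $x_i \in B$: here the iteration can step outside $B^n$, and one must verify through a careful case split on $d(x_i, y_i^{(k)}) \pmod 4$ that the subsequent iteration brings the coordinate back into $B$ with strictly smaller distance to $x_i$, so that despite the excursions into $W$ the overall sequence converges to $x_i$ in a bounded number of steps. Once this coordinatewise contraction is established, the rest of the argument follows cleanly from the single application of the L-convex inequality used in (a).
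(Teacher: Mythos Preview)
Your argument is correct and takes a genuinely different route from the paper's proof. The paper chooses a minimizer $y$ of $h$ with $d(x,y)$ minimal, invokes Proposition~2.7 to obtain a steepest-descent path $y=y^0,y^1,\ldots,y^m=x$, and then applies the refined $\pmb k$-submodular inequality of Lemma~2.11 on $\mathcal{F}(y^1)$ to produce a new minimizer $w'\in B^n$ strictly closer to $x$, reaching a contradiction. This route leans on machinery developed elsewhere in the paper (Theorem~2.4, Theorem~2.9 via Proposition~2.7, and Lemma~2.11).

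Your approach is more self-contained: a single application of the defining midpoint inequality, together with $g(x)\le g(x\circ y^{(k)})$, already gives monotonicity of $g$ along the orbit $y^{(k+1)}=x\bullet y^{(k)}$, and the rest is a purely combinatorial, coordinatewise convergence analysis on the tree. The case analysis you outline is accurate: for $x_i\in W$ the iterates remain black at odd distance and contract to the unique black neighbour of $x_i$ on the path to $y_i$; for $x_i\in B$ the distance strictly drops at every step (even when the iterate temporarily lands in $W$), so the coordinate converges to $x_i$ itself. Since each coordinate, once at its stable value, is fixed by $u\mapsto x_i\bullet u$, a common $K$ exists. One small bonus of your method: because $\mathcal{F}(x)\cap B^n$ is finite, your argument can be upgraded (by starting from a near-minimizer rather than a minimizer) to prove that $h$ actually \emph{attains} its infimum, something the paper's proof simply assumes. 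The trade-off is that your convergence analysis is a bit ad hoc, whereas the paper's argument, once the ambient lemmas are in place, is a short two-line contradiction.
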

The third one is a proximity theorem. 
The proximity theorem of 
L$^{\natural}$-convex function $g:\ZZ^n \to \barRR$ says 
that for any minimizer $x$ of $g$ over $(2\ZZ)^n$, 
there is a minimizer $y$ of $g$ with $\|x - y\|_{\infty} \leq n$~
(Iwata-Shigeno~\cite{IS02}; 
see \cite[Theorem 7.6]{MurotaBook} and \cite[Theorem 20.10]{FujiBook}).
We establish an analogous result for L-extendable functions.
\begin{Thm}[{Proximity}]\label{thm:proximity}
Let $h: T^n \to \barRR$ be a midpoint L-extendable function, 
and let $x$ be a minimizer of $h$ over $B^n$.
Then there exists a minimizer $y$ of $h$ with $d(x, y) \leq 2 n$.
In addition, if $h$ admits an exact L-convex relaxation, 
then there exists a minimizer $y$ of $h$ with $d(x, y) \leq n$
\end{Thm}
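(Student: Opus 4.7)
The proof strategy combines the midpoint convexity of an L-convex relaxation of $h$ with the persistency theorem (Theorem~\ref{thm:persistency}). Let $g:(T^*)^n\to\barRR$ be an L-convex relaxation of $h$ (exact when available), and let $d^*$ denote the $l_\infty$-metric on $(T^*)^n$ induced by the path metric of $T^*$. Since every edge of $T$ is subdivided once in $T^*$, we have $d^*(u,v)=2d(u,v)$ for all $u,v\in T^n\subseteq(T^*)^n$; so it suffices to produce a minimizer $y\in\opt(h)\subseteq T^n$ with $d^*(x,y)\le 4n$ in the general case and $\le 2n$ in the exact case.

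The persistency theorem, applied to $h$ viewed as L-extendable on the black class $T^n$ of $(T^*)^n$ with relaxation $g$, yields: for every $x^*\in\opt(g)$ there exists $y\in\opt(h)\cap T^n$ with $y\in{\cal F}(x^*)$ (the filter computed in $(T^*)^n$), hence $d^*(x^*,y)\le 1$. Thus the proximity claim reduces to bounding $d^*(x,\opt(g))\le 4n-1$ in the general case and $\le 2n-1$ in the exact case; the latter in fact becomes $d^*(x,\opt(h))\le 2n$, since $\opt(h)\subseteq \opt(g)$ in the exact case and the persistency rounding step may be skipped.

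To bound $d^*(x,\opt(g))$, I would pick $x^*\in\opt(g)$ closest to $x$ and suppose $d^*(x,x^*)$ exceeds the claimed bound. Midpoint convexity of $g$ gives
\[
g(x)+g(x^*)\;\ge\;g(x\bullet x^*)+g(x\circ x^*),
\]
with $\bullet,\circ$ computed in $T^*$; both midpoints lie strictly closer to $x$ than $x^*$ in $d^*$ when $d^*(x,x^*)\ge 2$, so by minimality of $x^*$ neither can itself be in $\opt(g)$, forcing $g(x)>g(x^*)$. Since $x\in B^n\subseteq T^n$ is all-black in $T^*$, we have ${\cal F}(x)=\{x\}$ in $(T^*)^n$, so Theorem~\ref{thm:L-optimality} produces a descent direction from $x$ inside ${\cal I}(x)$, and Theorem~\ref{thm:bound} then says the steepest descent terminates after exactly $d^*(x,\opt(g))$ iterations, each a local $\pmb k$-submodular minimization on ${\cal I}\cup{\cal F}$ of the current point by Lemma~\ref{lem:locally_k_submo} and moving at most~$1$ in $d^*$.

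The main obstacle is translating the hypothesis that $x$ minimizes $h$ over $B^n$ into the coordinate-wise bound $d^*(x,\opt(g))\le 2n-1$ (resp.\ $4n-1$); iterated midpointing alone only halves the distance per step and does not terminate at a minimizer within bounded distance. The $B^n$-minimality must be invoked to close the gap. I expect the proof to proceed by a potential-function-plus-coordinate-exhaustion argument: each of the $n$ coordinates in which a nearest minimizer of $g$ differs from $x$ contributes at most $2$ (exact) or $4$ (general) units to $d^*(x,\opt(g))$, since whenever a descent direction attempts to return to $B^n$ the $B^n$-minimality blocks further motion; summing over the $n$ coordinates yields the linear bound. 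The factor-of-$2$ gap between the two cases is exactly the extra $d^*$-unit per coordinate paid when rounding a minimizer of $g$ (which may have white, i.e.\ $T^*$-midpoint, coordinates) up to a vertex of $T^n$ via persistency.
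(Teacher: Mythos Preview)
Your reduction via persistency is sound in outline, and you correctly identify that the whole proof hinges on converting the hypothesis ``$x$ minimizes $h$ over $B^n$'' into a coordinate-wise bound on the distance to a nearest minimizer. But you never actually carry this out: the final paragraph is speculation, not an argument. The phrase ``whenever a descent direction attempts to return to $B^n$ the $B^n$-minimality blocks further motion'' has no precise content---the steepest descent of $g$ on $(T^*)^n$ does not interact with the $B^n$-minimality of $h$ in any obvious way, and nothing you have written forces the descent to terminate in $O(n)$ steps.

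The paper does not go through $\opt(g)$ or steepest descent at all. It takes a minimizer $y$ of $h$ (not of $g$) closest to $x$, orders the coordinates so that $d_1 \ge d_2 \ge \cdots \ge d_n$ where $d_i = d(x_i,y_i)$, and writes $y = x + \sum_i (d_i - d_{i+1})\pi_i$ with $\pi_i = e_1 + \cdots + e_i$. The heart of the proof is a lemma (Lemma~\ref{lem:pi_i}) showing that whenever $d_i - d_{i+1} \ge 2$ one has the strict chain $h(x) > g(x+\pi_i/2) > h(x+\pi_i)$; this is obtained by repeatedly applying the midpoint inequality for $g$ at carefully chosen pairs of half-integer points (the identities (\ref{eqn:pi1})--(\ref{eqn:pi4})) together with an induction on $d(x,y)$. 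Iterating gives $h(x) > h(x+\pi_i) > h(x+2\pi_i) > \cdots$, and since $x + 2\pi_i \in B^n$ the $B^n$-minimality of $x$ forces $d_i - d_{i+1} \le 2$; summing over $i$ gives $d(x,y) \le 2n$. In the exact case the same lemma holds already for $d_i - d_{i+1} \ge 1$, yielding $d(x,y) \le n$. This direction-by-direction strict descent via the $\pi_i$ vectors is the missing mechanism in your proposal, and it does not follow from Theorems~\ref{thm:L-optimality} or~\ref{thm:bound} alone.
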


\begin{Ex}[Vertex cover]
As noted in \cite{Kolmogorov12DAM}, vertex cover problem is 
a representative example
admitting a bisubmodular relaxation (L-convex relaxation in our sense).
Let $G = (V,E)$ be an undirected graph with (nonnegative) cost $a$ on $V = \{1,2,\ldots,n\}$.
A {\em vertex cover} is a set $X$ of vertices meeting every edge.
The vertex cover problem asks to find a vertex cover $X$ of minimum cost $a(X)$.
The well-known IP formulation of this problem is:
Minimize $\sum_{i \in V} a(i) x_i$ over $x \in \{0,1\}^{n}$ satisfying 
$x_i + x_j \geq 1$ for $ij \in E$.
Define $I_{\geq 1}: \RR \to \overline\RR$ by $I_{\geq 1}(z) = 0$ if $z \geq 1$ 
and $\infty$ otherwise, and define $\omega: \{0,1\}^n \to \overline{\RR}$ by
$\omega(x) := \sum_{i \in V} a(i) x_i + \sum_{ij \in E} I_{\geq 1}(x_i + x_j)$.
Then the vertex cover problem is the minimization of $\omega$.
This function $\omega$ is midpoint L-extendable 
(if $\{0,1\}$ is identified with the vertex set of the graph of single edge).
Indeed, the natural extension $\bar \omega: \{0,1/2,1\}^n \to \overline{\RR}$
is a bisubmodular relaxation
(if $\{0,1/2,1\}$ is identified with $S_2$ with $0 \succ 1/2 \prec 1$).

The {\em submodular vertex cover problem}~\cite{IN09} 
is to minimize submodular function 
$f: \{0,1\}^n \to \overline{\RR}$ over $x \in \{0,1\}^n$ 
satisfying $x_i + x_j \geq 1$ for $ij \in E$. 
Namely this is the minimization of $\omega$ defined by 
$\omega(x) = f(x) + \sum_{ij \in E} I_{\geq 1} (x_i + x_j)$.
Again $\omega$ is midpoint L-extendable.
Indeed 
a function $x \mapsto (f(\lceil x \rceil) + f(\lfloor x \rfloor))/2 + \sum_{ij \in E} I_{\geq 1} (x_i + x_j)$
is a bisubmodular relaxation of $\omega$.
\end{Ex}

\begin{Rem}\label{rem:midpoint}
We can consider several variants of 
the discrete midpoint convexity inequality and associated discrete convex functions.
Suppose that each edge of $T$ has an orientation.
Let ${\rm mid}: T \times T \to T^*$ be defined by: 
${\rm mid}(p,q)$ is the unique vertex $u \in T^*$
with $d(p,u) = d(u,q)$ and $d(p,u) + d(u,q) = d(p,q)$.
For $u \in T^* \setminus T$, 
let $\overline u$ and $\underline u$ denote
the vertices of $T$ such that 
$\overline u \underline u$ is an edge with midpoint $u$, 
and is oriented from $\overline u$ to $\underline u$.
For $u \in T$, let $\overline u = \underline u := u$.
Extend these operations to operations on $T^n$ in componentwise, 
as above.
Consider function $g$ satisfying
\begin{equation}\label{eqn:midpoint'}
g(x) + g(y) \geq g( \overline{{\rm mid} (x, y)}) + g( \underline{{\rm mid} (x,y)}) \quad (x,y \in T^n).
\end{equation}
In the case where $T$ is a path on $\ZZ$ obtained 
by joining $i$ and $i+1$ and by orienting $i \to i+1$ $(i \in \ZZ)$,  
the above inequality (\ref{eqn:midpoint'}) 
is equal to (\ref{eqn:midpoint_org}), and $g$ is L$^\natural$-convex.
In the case where there is a unique sink in $T$, i.e., $T$ is a rooted tree, 
the operations $\overline{\rm mid}$ and $\underline{\rm mid}$ are equal, respectively, 
to $\sqcup$ and $\sqcap$ in the sense of~\cite{Kolmogorov11MFCS}, 
and $g$ is strongly tree submodular.
Also notice that alternating L-convex functions correspond to the zigzag orientation.

So different orientations of $T$ define different classes of 
discrete convex functions.
Theory of L-extendable functions 
captures all these discrete convex functions by the following fact:
\begin{myitem}
If $g: T^n \to \overline\RR$ 
satisfies (\ref{eqn:midpoint'}), then $g$ is midpoint L-extendable,  
where its exact L-convex relaxation $\bar g: (T^*)^n \to \overline\RR$ is given by
\[
\bar g(u) :=  ( g(\overline u ) + g(\underline u ))/2 \quad (u \in (T^*)^n).
\] \label{eqn:fact} \vspace{-0.5cm}
\end{myitem}
\noindent 
We will give the proof of this fact in \cite{HHprepar} (since it is bit tedious).
In particular the minimization of $g$ over $T^n$ can be solved by 
the minimization of $\bar g$ over $(T^*)^n$.
Instead of $g$, we can apply our results to $\bar g$
(and obtain results for original $g$). 
This is another reason why 
we consider alternating L-convex functions and L-extendable functions.
\end{Rem}

\subsection{$2$-separable convex functions}\label{subsec:2-separable}
In this section, we introduce a special class of L-extendable functions, 
called {\em 2-separable convex functions}.
This class is an analogue of
a class of functions $f$ on $\ZZ^n$ represented as the following form:
 \begin{equation}\label{eqn:2separable_ZZ}
 \sum_{i} f_i (x_i) + \sum_{i,j} g_{ij} ( x_i -  x_j ) + \sum_{i,j} h_{ij} ( x_i + x_j ) \quad (x \in \ZZ^n),
 \end{equation}
where $f_i, g_{ij}$, and $h_{ij}$ are $1$-dimensional convex functions on $\ZZ$.
Hochbaum~\cite{Hochbaum02} 
considers minimization of functions with this form,  
and provides a unified framework to 
NP-hard optimization problems 
admitting half-integral relaxation and 2-approximation algorithm.
Recall (\ref{eqn:typical}) that
a function without terms $h_{ij}(x_i+ x_j)$ 
is a representative example of L$^\natural$-convex functions.
It is known that the half-integral relaxation 
of (\ref{eqn:2separable_ZZ})
can be efficiently minimized by a maximum flow algorithm~\cite{Hochbaum02}; 
also see~\cite{AHO04, KS09}. 

In this section, we show analogous results:
a 2-separable convex function admits an L-convex relaxation each of 
whose local $\pmb k$-submodular function is 
a sum of basic $\pmb k$-submodular functions.
Hence this L-convex relaxation can be efficiently minimized 
by successive applications of max-flow min-cut computations. 
Moreover, for some special cases,
a solution of the L-convex relaxation can be rounded 
to a 2-approximation solution of the original 2-separable convex function.
  
We start with the (one-dimensional) convexity on a tree.
A function $h$ on $\ZZ$ is said to be {\em nondecreasing} if
$
\Delta h(t) := h(t) - h(t-1) \geq 0$ for $t \in \ZZ$, an is said to be {\em convex} if
$
\Delta^2h(t) := h(t+1) - 2h(t) + h(t-1) \geq 0$ for $t \in \ZZ$, and 
is said to be {\em even} if 
$(h(t - 1) + h(t + 1))/2 = h(t)$ for every odd integer $t$.
We can naturally define convex functions on a tree $T$.
It should be noted that this notion of convexity  was considered in 
the classical literature of facility location analysis~\cite{DFL76, Kolen, TFL83}.
For $u,v \in T$ and an integer $t$ with $0 \leq t \leq d(u,v)$, 
let $[u,v]_t$ denote the unique vertex $s$ satisfying $d(u,v) = d(u,s) + d(s,v)$ and 
$t = d(u,s)$.
A function $h$ on $T$ is said to be {\em convex}
if for any vertices $u,v$ in $T$, 
a function on $\ZZ$, defined by $t \mapsto h([u,v]_t)$ for $0 \leq t \leq d(u,v)$ 
(and $t \mapsto +\infty$ otherwise),  is convex. %It is not difficult to see:
\begin{Lem}\label{lem:tree_convexity}
For a function on $T$, the convexity, L-convexity, and L-extendability are equivalent.
For convex functions $f,g$ on $T$ and $\alpha, \beta \in \RR_+$,   
$\alpha f+ \beta g$ is convex,
and $\max (f, g)$, defined by $u \mapsto \max \{f(u), g(u)\}$, is convex.
\end{Lem}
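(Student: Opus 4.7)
The plan is to establish the cycle of implications convex $\Rightarrow$ L-convex $\Rightarrow$ L-extendable $\Rightarrow$ convex, using as the key underlying tool the chord property of one-dimensional discrete convex functions: a function $\tilde h: \ZZ \to \barRR$ with $\Delta^2 \tilde h \geq 0$ satisfies $\tilde h(s) + \tilde h(r) \geq \tilde h(s') + \tilde h(r')$ whenever $s \leq s' \leq r' \leq r$ and $s + r = s' + r'$, a standard consequence of non-decreasing first differences.

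For convex $\Rightarrow$ L-convex, I would parametrize the unique geodesic between $x, y \in T$ as $[x,y]_0, \ldots, [x,y]_{d(x,y)}$, observe that the pair $(a,b)$ in the definition of $x \bullet y, x \circ y$ sits on this geodesic at positions $s' = d(x,a)$ and $r' = d(x,a) + d(a,b)$ with $s' + r' = d(x,y)$, and apply the chord property to $t \mapsto h([x,y]_t)$ with $s = 0, r = d(x,y)$ to obtain $h(x) + h(y) \geq h(a) + h(b)$. For L-convex $\Rightarrow$ L-extendable, I would set $g(u) := h(u)$ on $T$ and $g(w_{uv}) := (h(u) + h(v))/2$ on the midpoint of each edge, and verify the three-point convexity $g(p) + g(r) \geq 2 g(q)$ on every path $p, q, r$ of length $2$ in $T^*$. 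Since $T$ and the midpoints form the two color classes of $T^*$, only two configurations can occur: (i) $p, r \in T$ and $q = w_{pr}$, giving equality by definition; (ii) $p = w_{ab}$, $q = b \in T$, $r = w_{bd}$ with $a, b, d$ consecutive in $T$, which reduces to $h(a) + h(d) \geq 2 h(b)$, exactly the L-convexity of $h$ applied to the pair $a, d$ whose midpoint in $T$ is $b$. Thus $g$ is convex, hence L-convex, on $T^*$, and $h$ is L-extendable. For L-extendable $\Rightarrow$ convex, given an L-convex $g$ on $T^*$ extending $h$, I would apply the L-convex inequality in $T^*$ to three consecutive vertices $p, q, r$ of a path in $T$: they sit at $T^*$-distances $0, 2, 4$ with $q$ the unique $T^*$-midpoint of $p$ and $r$, yielding $h(p) + h(r) = g(p) + g(r) \geq 2g(q) = 2h(q)$, i.e., three-point convexity and hence convexity of $h$ on $T$. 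The closure assertions are immediate because $\Delta^2 \geq 0$ on $\ZZ$ is preserved under nonnegative linear combinations and pointwise maxima, and convexity on $T$ reduces to convexity on $\ZZ$ along each geodesic.

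The main obstacle I anticipate is the convexity check for the extension $g$ in the middle step: one must enumerate the possible local configurations of three-vertex paths in $T^*$ (ruling out any mixed $T$-midpoint-midpoint-type path by the bipartite structure) and, in the midpoint-vertex-midpoint case, invoke the L-convexity hypothesis on $T$ at precisely the right triple $(a, b, d)$ in order to close the cycle of implications.
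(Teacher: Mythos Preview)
Your proposal is correct and follows essentially the same approach as the paper: both use the discrete midpoint/chord inequality for convex $\Leftrightarrow$ L-convex and the midpoint-averaging extension $g(w_{uv}) := (h(u)+h(v))/2$ for L-extendability. The only difference is presentational: the paper opens with ``it suffices to consider the case where $T$ is a path'' and then works on $\ZZ$, whereas you work directly on the tree and make the path-local nature of each check explicit; your enumeration of the two three-point configurations in $T^*$ is exactly what justifies the paper's unargued reduction step.
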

Let $h$ be a function on $\ZZ$. 
For vertices $z,w \in T$, 
we consider three functions $h_T, h_{T;z}, h_{T;z,w}$ defined by
\begin{eqnarray*}
h_T (u,v) & := & h(d(u,v))   \quad (u,v \in T), \\
h_{T;z} (u) & := & h(d(u,z))  \quad (u \in T), \\
h_{T;z,w} (u,v) & := & h(d(u,z) + d(v,w))  \quad (u,v \in T).
\end{eqnarray*}
We will see below that $h_{T;z}$ is L-convex, 
and the other two functions are (midpoint) L-extendable; 
notice that they are not L-convex in general.
\begin{Thm}\label{thm:2separable} 
Let $h$ be a non-decreasing convex function on $\ZZ$, 
and let $z,w$ be vertices of $T$.
\begin{itemize}
\item[{\rm (1)}] 
$h_{T;z}$ is convex on $T$.
\item[{\rm (2)}] Suppose that $h$ is even.
Then $h_{T}$ is L-convex. Moreover, for $(u,v) \in T \times T$, 
function $h_T$ on ${\cal F}(u) \times {\cal F}(v)$ is 
a sum of basic $\pmb k$-submodular functions. 
Namely,  
for $(s,t) \in {\cal F}(u) \times {\cal F}(v)$, 
it holds
\begin{equation*}
h_T (s,t) - h (D) =
\left\{ \begin{array}{ll} 
\Delta h(1) \delta (s,t) & {\rm if}\ u = v \in W, \\
\Delta h(D) \theta_{a}(s) & {\rm if}\ u \in W, v \in B, \\
\Delta  h(D) \theta_{b}(t) & {\rm if}\ u \in B, v \in W, \\
 \Delta  h(
 D) (\theta_{a}(s) + \theta_{b}(t))  + 
\Delta^2  h(D) \mu_{a,b}(s,t)  & {\rm if}\ u,v \in W: u \neq v,\\
 0 & {\rm if}\ u,v \in B.
\end{array}\right. 
\end{equation*}
where $D := d(u,v)$, and
$a$ and $b$ are vertices in ${\cal F}(u)$ and in ${\cal F}(v)$ nearest to $v$ and $u$, respectively. 

\item[{\rm (3)}] Suppose that $h$ is even, and
$z,w$ belong to the same color class.
Then $h_{T;z,w}$ is L-convex. %
Moreover, for $(u,v) \in T \times T$, function $h_{T;z,w}$ 
on ${\cal F}(u) \times {\cal F}(v)$
is a sum of basic $\pmb k$-submodular functions.  Namely, for $(s,t) \in {\cal F}(u) \times {\cal F}(v)$, it holds
\begin{equation*}
h_{T;z,w} (s,t) - h(D) 
 = 
\left\{ \begin{array}{ll}
\Delta  h (D) \theta_a(s)  & {\rm if}\  u \in W, v \in B, \\
\Delta  h (D) \theta_b(t)  & {\rm if}\  u \in B, v \in W, \\
\Delta  h(D) (\theta_a(s) + \theta_b(t))  + 
\Delta^2  h(D) \mu_{a,b}(s,t)
& {\rm if}\ u,v \in W, \\
0  & {\rm if}\ u,v \in B,
\end{array}\right. 
\end{equation*}
where $D:= d(u,z) + d(v,w)$, 
and $a$ and $b$ are vertices in ${\cal F}(u)$ and  ${\cal F}(v)$ nearest to $z$ and $w$, respectively.
\end{itemize}
\end{Thm}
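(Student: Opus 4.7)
The plan is to handle each part in turn, establishing local decompositions first and then deriving global L-convexity. Part (1) is the easiest: for any geodesic $[u,v]$ in $T$ and any $z \in T$, there is a unique median vertex $m$ of $\{u,v,z\}$ on $[u,v]$, and $d([u,v]_t, z) = |t - d(u,m)| + d(m,z)$ for $0 \leq t \leq d(u,v)$. This is a V-shaped, integer-valued convex function of $t$; composition with the nondecreasing convex $h$ preserves convexity, so $h_{T;z}$ is tree-convex, and L-convexity follows from Lemma~\ref{lem:tree_convexity}.

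For the local decompositions in (2) and (3), fix $(u,v) \in T \times T$ and set $D := d(u,v)$ for (2) or $D := d(u,z) + d(v,w)$ for (3). Let $a \in {\cal F}(u)$, $b \in {\cal F}(v)$ be the neighbors on the geodesics $[u,v]$ and $[v,u]$ in (2), respectively $[u,z]$ and $[v,w]$ in (3). For $(s,t) \in {\cal F}(u) \times {\cal F}(v)$, a unit-step analysis of the tree metric gives: moving $s$ from $u$ to $a$ shifts the relevant distance by $-1$, moving $s$ to any other neighbor shifts it by $+1$, and the same for $t$ with $b$. A case split on the colors of $u,v$ then encodes the four outcomes: when $u \in W, v \in B$ only $s$ varies, giving the $\theta_a$ term; when $u = v \in W$ in (2), $(s,t)$ moves within the star at $u$ with $d(s,t) \in \{0,2\}$, giving the $\delta$ term; when $u,v \in W, u \neq v$ in (2) or $u, v \in W$ in (3), the four move types yield $d$-values $D-2, D, D, D+2$, exactly encoded by $\theta_a(s) + \theta_b(t)$ plus the bi-term $\mu_{a,b}(s,t)$ correcting the $s \neq a, t \neq b$ case. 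The even-convexity identities $h(D \pm 1) = h(D) \pm \Delta h(D)$ and $h(D \pm 2) = h(D) \pm 2\Delta h(D) + \Delta^2 h(D)$, valid by evenness of $h$ (using in (3) that $z, w$ share a color so $D$ has constant parity on each color class of $(u,v)$), match the stated coefficients. Nonnegativity of $\Delta h(D), \Delta^2 h(D)$ makes the decomposition a nonnegative sum of basic $\pmb k$-submodular functions.

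For global L-convexity, the local decompositions show that $h_T$ and $h_{T;z,w}$ are $\pmb k$-submodular on every ${\cal F}(u) \times {\cal F}(v)$ and, symmetrically, every ${\cal I}(u) \times {\cal I}(v)$. Part (3) then reduces to the separable case: the V-shape of Part (1) gives $d(u_1, z) + d(u_2, z) \geq d(u_1 \bullet u_2, z) + d(u_1 \circ u_2, z)$ together with the analog for $v_1, v_2, w$, and combined with the concentration $|d(u_1 \bullet u_2, z) - d(u_1 \circ u_2, z)| \leq 1$ (and similarly for $v$) and parity consistency, the convexity and monotonicity of the even function $h$ deliver the midpoint inequality. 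For Part (2), I would prove the tree-geometric lemma
\[
d(u_1 \bullet u_2, v_1 \bullet v_2) + d(u_1 \circ u_2, v_1 \circ v_2) \leq d(u_1, v_1) + d(u_2, v_2),
\]
together with a majorization of the left-hand pair by the right-hand pair up to parity slack, by applying the tree four-point condition to the four points $u_1, u_2, v_1, v_2$ in conjunction with the midpoints of $[u_1, u_2]$ and $[v_1, v_2]$. The main obstacle is exactly this joint tree-geometric lemma; Part (3) sidesteps it by separability, and the local decomposition in Part (2) is essentially a direct computation, but the combined dependence of $d(u,v)$ on both variables forces a careful case analysis of the four-point configuration in $T$.
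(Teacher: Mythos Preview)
Your treatment of Part~(1) and of the local decompositions in (2) and (3) is correct and matches the paper's approach closely; the paper packages the unit-step case analysis into a separate lemma (Lemma~\ref{lem:123}) but the content is the same.

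The genuine gap is in the global L-convexity for Part~(2), and to a lesser extent Part~(3). You correctly identify that local $\pmb k$-submodularity on every ${\cal F}(u)\times{\cal F}(v)$ does \emph{not} imply the discrete midpoint inequality for arbitrary pairs, and you propose to fill this with a tree-geometric majorization lemma proved via the four-point condition---but you do not prove it and explicitly call it ``the main obstacle.'' The paper avoids this obstacle entirely by passing to the \emph{continuous} tree $\bar T$ and invoking an old result of Dearing--Francis--Lowe (Lemma~\ref{lem:DFL}): the metric $d$ is convex on $\bar T^2$, i.e.\ $d(u,v)+d(u',v')\ge 2\,d(u\circ_{1/2}u',\,v\circ_{1/2}v')$. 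Combined with the convexity and monotonicity of $h$, this immediately gives $h(d(u,v))+h(d(u',v'))\ge 2h(d(\bar u,\bar v))$ for the continuous midpoints $\bar u,\bar v$. The remaining work is then a short parity case analysis (the two Claims in the paper) showing that $2h(d(\bar u,\bar v))$ splits exactly as $h(d(u\bullet u',v\bullet v'))+h(d(u\circ u',v\circ v'))$, using evenness of $h$. The same device handles Part~(3), with DFL applied separately in each coordinate since $z\circ_{1/2}z=z$.

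Your purely discrete route via the four-point condition is not wrong in spirit, but it would require you to reprove, in discrete form and with the correct parity bookkeeping, essentially the content of the DFL lemma plus the paper's Claims. The paper's approach buys you that geometric inequality as a black box, reducing the proof to a finite case check on how the continuous midpoint rounds to $\bullet$ and $\circ$. If you want to complete your argument, the cleanest fix is to cite or reprove Lemma~\ref{lem:DFL} rather than attack the discrete majorization directly.
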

The local expressions of $h_{T}$ and $h_{T;z,w}$ 
on ${\cal I}(u) \times {\cal I}(v)$
are obtained by replacing roles of $B$ and $W$.

A function $\omega$ on $T^n$ is said to be {\em $2$-separable L-convex} 
if $\omega$ is a sum of functions given in Theorem~\ref{thm:2separable}:
\begin{equation}\label{eqn:omega}
\omega (x) := \sum_{i} f_i (x_i) + \sum_{i,j} g_{ij} (d(x_i, x_{j})) 
+ \sum_{i,j} h_{ij} (d(x_i, z_i) + d(x_j, w_j)) \quad (x \in T^n),
\end{equation}
where each $f_i$ is a convex function on $T$, each $g_{ij}$ and $h_{ij}$ are  nondecreasing even convex function on $\ZZ$, 
and $z_i$ and $w_j$ are vertices in the same color class.
A function $\omega$ on $B^n$ is said to be {\em $2$-separable convex} 
if $\omega$ is the form of (\ref{eqn:omega}) where 
each $g_{ij}$ and $h_{ij}$ are (not necessarily even) 
nondecreasing convex functions on $\ZZ$.
A $2$-separable convex function on $B^n$ is L-extendable, and
its L-convex relaxation $\bar \omega$ on $T^n$ is explicitly given by
\begin{equation}%\label{eqn:bar_omega}
\bar \omega (x) := \sum_{i} f_i (x_i) + \sum_{i,j} \bar g_{ij} (d(x_i, x_{j})) 
+ \sum_{i,j} \bar h_{ij} (d(x_i, z_i) + d(x_j, w_j)) \quad (x \in T^n),
\end{equation}
where $\bar g_{ij}$ and $\bar h_{ij}$ are even functions obtained from $g_{ij}$ and $h_{ij}$ by replacing $g_{ij}(z)$ and $h_{ij}(z)$ 
by $(g_{ij}(z - 1) + g_{ij}(z+1))/2$ and $(h_{ij}(z - 1) + h_{ij}(z+1))/2$, respectively, 
for each odd integer $z$. 
A function $\omega$ on $T^n$ is also said be $2$-separable convex 
if $\omega$ is the form of (\ref{eqn:omega}) where 
each $g_{ij}$ and $h_{ij}$ are (not necessarily even) 
nondecreasing convex functions on $\ZZ$.
Then $\omega$ is midpoint L-extendable.

By Theorem~\ref{thm:2separable}, 
the L-convex relaxation $\bar \omega$ is locally a sum of basic $k$-submodular functions. 
Hence $\bar \omega$ is efficiently minimized 
by successive applications of maximum flow (minimum cut) computations.
We will see in Section~\ref{sec:multiflow} that
2-separable convex functions arise 
from minimum cost multiflow problems.

A {\em convex multifacility location function} is a special 
2-separable convex function represented as
\begin{equation}
\omega (x) = \sum_{i,j} f_{ij} (d(x_i, z_j)) + \sum_{i,j} g_{ij} (d(x_i, x_{j})) \quad (x \in B^n),
\end{equation}
where $f_{ij}$ and $g_{ij}$ are nonnegative-valued 
nondecreasing convex functions, and $z_j$ are black vertices.
In this case, we take an L-convex relaxation
\begin{equation}\label{eqn:bar_omega}
\bar \omega (x) = \sum_{i,j} \bar f_{ij} (d(x_i, z_j)) + \sum_{i,j} \bar g_{ij} (d(x_i, x_{j})) \quad (x \in T^n).
\end{equation}
For the case where all terms are 
linear functions $b_{ij} d(x_i, z_j)$, $c_{ij} d(x_i, x_j)$ 
with nonnegative coefficients $b_{ij}$, $c_{ij}$, 
the problem of minimizing (\ref{eqn:bar_omega}) is known as 
a {\em multifacility location problem} on a tree; see \cite{Kolen, TFL83} 
and also its recent application to computer vision~\cite{FPTZ10, GK13}, where
a faster algorithm in \cite{GK13} is 
applicable to the case where only $g_{ij}$ are linear 
(since our notion of convexity
is the same as {\em $T$-convexity} in \cite{GK13} 
for the case of uniform edge-length).

There is a natural rounding scheme from $T^n$ to $B^n$. 
In some cases, we can construct a good approximate solution for $\omega$ 
from a minimizer of $\bar \omega$.
For $y \in B$ and $x \in T^n$, let $x_{\rightarrow y}$ denote the vertex $z \in B^n$
such that for each $i=1,2,\ldots,n$,
$z_i = x_i$ if $x_i \in B$, and
$z_i$ is the unique neighbor of $x_i$ close to $y$ (i.e., $d(x_i,y) = 1 + d(z_i,y)$) if $x_{i} \in W$.
\begin{Thm}\label{thm:approx}
Let $\omega: B^n \to  \RR$ be a $2$-separable convex function consisting of $m$ terms, 
and $\bar \omega: T^n \to \RR$ be its L-convex relaxation.
\begin{itemize}
\item[{\rm (1)}] For a given $x \in B^n$, there is 
an $O(d({\rm opt}(\bar \omega), x) {\rm MF}(k n, k m))$ time algorithm to find 
a global minimizer $x^*$ of $\bar \omega$ over $T^n$, 
where $k$ is the maximum degree of $T$.
\item[{\rm (2)}] Suppose that $\omega$ is a convex multifacility location function.
For any $y \in B$, the rounded solution $(x^*)_{\rightarrow y}$
is a $2$-approximate solution of $\omega$.
\end{itemize}
\end{Thm}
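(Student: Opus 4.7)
My plan is to run the steepest descent algorithm of Section~\ref{subsec:alternating} on input $(\bar\omega, x)$. Since $x \in B^n$ forces ${\cal F}(x_i) = \{x_i\}$ for every coordinate, the extra hypothesis $\bar\omega(x) = \min_{y \in {\cal F}(x)}\bar\omega(y)$ in Theorem~\ref{thm:bound} is automatically satisfied, so the total number of descent steps equals $d({\rm opt}(\bar\omega), x)$ exactly. At each step we minimize $\bar\omega$ over ${\cal I}(y) \cup {\cal F}(y)$, and Theorem~\ref{thm:2separable} together with the decomposition~(\ref{eqn:unary}) for the unary summands expresses this restriction (up to an additive constant) as a nonnegative sum of basic $\pmb k$-submodular functions whose count and total network size are $O(km)$ with $\max k_i \leq k$. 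Theorem~\ref{thm:IWY14} then performs each local minimization in $O({\rm MF}(kn, km))$ time, and multiplying by the iteration count yields the stated bound.

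\textbf{Part (2), setup and easy cases.} Let $x^*$ be the minimizer produced in (1). Since $\bar\omega$ agrees with $\omega$ on $B^n$, we have $\bar\omega(x^*) \leq \bar\omega(z) = \omega(z)$ for every $z \in B^n$, so it suffices to prove $\omega((x^*)_{\rightarrow y}) \leq 2 \bar\omega(x^*)$, which I will establish term by term. If every relevant coordinate of $x^*$ lies in $B$, the corresponding term is unchanged by rounding. If exactly one relevant coordinate lies in $W$ (the only possibility for an $f_{ij}(d(x_i, z_j))$ term, and one subcase for the pairwise $g_{ij}$ term), the relevant distance $D$ is odd, so by evenness $\bar f_{ij}(D) = (f_{ij}(D-1) + f_{ij}(D+1))/2$; the rounded distance is $D \pm 1$, and combining nondecreasingness with nonnegativity gives $f_{ij}(D \pm 1) \leq f_{ij}(D+1) \leq 2 \bar f_{ij}(D)$, and the same for $g_{ij}$.

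\textbf{Part (2), hard case and main obstacle.} The remaining case is a pairwise term $g_{ij}(d(x_i^*, x_j^*))$ in which both $x_i^*, x_j^* \in W$. Here $D := d(x_i^*, x_j^*)$ is even and $\bar g_{ij}(D) = g_{ij}(D)$, so I must in fact show that the rounded distance never exceeds $D$, so that nondecreasingness yields the bound with no factor of $2$. The main obstacle is the following tree-geometric lemma: for $u, v \in W$ and $y \in B$, let $m$ be the first common vertex on the paths from $u$ and from $v$ to $y$; then $d((u)_{\rightarrow y}, (v)_{\rightarrow y})$ equals $d(u, v)$ when $m \in \{u, v\}$ and $d(u, v) - 2$ otherwise. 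I will prove this by case analysis: when $m = u$, the neighbor $(v)_{\rightarrow y}$ is the first step of the $v \to y$ path (which passes through $u$), while $(u)_{\rightarrow y}$ is one step past $u$ along the same extended path, giving $d((u)_{\rightarrow y}, (v)_{\rightarrow y}) = d(u,v)$; when $m \notin \{u, v\}$, both rounded vertices sit strictly between their origins and $m$, so two edges disappear from the unique $u$–$v$ path. Summing all termwise inequalities then gives $\omega((x^*)_{\rightarrow y}) \leq 2 \bar\omega(x^*) \leq 2 \min_{z \in B^n} \omega(z)$, completing the proof.
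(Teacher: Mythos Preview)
Your proof is correct and follows essentially the same route as the paper: Part~(1) is the steepest descent algorithm with the iteration bound from Theorem~\ref{thm:bound} (you correctly observe that $x\in B^n$ trivializes the side hypothesis) together with Theorems~\ref{thm:2separable} and~\ref{thm:IWY14} for the per-step cost, and Part~(2) is the same term-by-term comparison using monotonicity and nonnegativity in the odd-distance cases and the median/tree-geometry argument in the both-white case. Your tree lemma is slightly sharper than what the paper states (exact values $D$ or $D-2$ rather than merely $\le D$), but this is harmless and the case analysis is the same.
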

\begin{Ex}[Multiway cut]\label{rem:multiway}
As discussed in \cite{IWY14}, multiway cut problem is 
a representative example of NP-hard problems admitting a $k$-submodular relaxation 
(an L-convex relaxation in our sense). 
Let $G = (V,E)$ be an undirected graph with a set $S  \subseteq V$ 
of terminals and an edge-capacity $c:E \to \RR_+$.
Let $V \setminus S = \{1,2,\ldots,n\}$.
A {\em multiway cut} ${\cal X}$ is a partition of $V$ such that 
each part contains exactly one terminal.
The capacity $c({\cal X})$ of ${\cal X}$ is the sum of $c(ij)$ over all edges $ij$ 
whose ends $i$ and $j$ belong to distinct parts in ${\cal X}$. 
The multiway cut problem in $G$ is the problem of finding 
a multiway cut with minimum capacity.

The multiway cut problem is formulated as minimization of 
a multifacility location function on a star.
Let $T$ be a star with leaf set $S$ and center vertex $0$. 
Suppose that $B = S$ and $W = \{0\}$.
Consider the following 2-separable convex function minimization: 
\begin{eqnarray}\label{eqn:multiway}
\mbox{Min.}  && \frac{1}{2} \sum_{1 \leq i \leq n} \sum_{s \in S: si \in E} 
c(si) d(s, x_i)   +  \frac{1}{2} \sum_{ij \in E: 1 \leq i,j \leq n} c(ij) d (x_i, x_j), \\
\mbox{s.t.} && (x_1,x_2,\ldots,x_n) \in B^n. \nonumber  
\end{eqnarray}
This is equivalent to the multiway cut problem on $G$.
To see this,  for a multiway cut ${\cal X}$,
define $x = (x_1,x_2,\ldots,x_n)$ by: $x_i := s$ if $i$ and $s$ 
belong to the same part of ${\cal X}$.
Then the objective of the resulting $x$ is equal to the cut capacity of ${\cal X}$.
Conversely, for a solution $x$ of (\ref{eqn:multiway}), 
define $X_s$ by the set of all vertices $i$ with $x_i = s$ for $s \in S$.
Then ${\cal X} := \{X_s\}_{s \in S}$ is a multiway cut 
whose capacity is equal to the objective value of $x$. 

An L-convex relaxation problem relaxes the constraint $x \in B^n$ into 
$x \in T^n = (B \cup \{ 0\})^n$.  
If $T^n$ is identified with ${S_k}^n$,  
then this is a $k$-submodular function minimization, where 
$d$ is equal to $\delta$.
An optimum $x^*$ of the relaxed problem can be efficiently obtained 
by $(s, S \setminus \{s\})$-mincut computations for $s \in S$; 
see Section~\ref{subsub:LC}.
Take some $s \in B$. Consider $y := (x^*)_{\rightarrow s}$.
This 2-approximation is 
essentially equal to the classical 2-approximation algorithm 
of multiway cut problem~\cite{DJPSY94}; see \cite[Algorithm 4.3]{Vazirani}.
\end{Ex}

\subsection{Proofs}\label{subsec:proof}
In this section, we give proofs of results.
We will often use the following variation of $\pmb k$-submodularity inequality.
\begin{Lem}\label{lem:k-submo'}
If $f$ is a $\pmb k$-submodular function on $S_{\pmb k}$, then
\begin{equation}\label{eqn:k-submo'}
f(x) + f(y) \geq f(x \wedge y) + \frac{1}{2} f(x \sqcup (x \sqcup y)) +  \frac{1}{2} f((x \sqcup y) \sqcup y)
\quad (x,y \in S_{\pmb k}).
\end{equation}
\end{Lem}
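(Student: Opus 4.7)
The plan is to combine four applications of $\pmb k$-submodularity. Let $z := x \sqcup y$, $p := x \wedge z$, and $q := y \wedge z$; note that the right-hand side of the target inequality equals $f(x \wedge y) + \frac{1}{2} f(x \sqcup z) + \frac{1}{2} f(y \sqcup z)$ since $\sqcup$ is commutative.

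The key step is to establish the two componentwise identities $p \wedge q = x \wedge y$ and $p \sqcup q = z$. By the definition of $\sqcup$ on $S_k$, the component $p_i = x_i \wedge (x_i \sqcup y_i)$ equals $x_i$ when $y_i \preceq x_i$ (that is, when $y_i = 0$ or $y_i = x_i$) and equals $0$ otherwise, with a symmetric formula for $q_i$. A short case analysis on whether $(x_i, y_i)$ has both components zero, exactly one zero, two equal nonzero components, or two incomparable nonzero components then confirms both identities.

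Granting the identities, I would sum the following four instances of $\pmb k$-submodularity, applied to the pairs $(x,y)$, $(x,z)$, $(y,z)$, and $(p,q)$:
\begin{align*}
f(x) + f(y) & \geq f(x \wedge y) + f(z), \\
f(x) + f(z) & \geq f(p) + f(x \sqcup z), \\
f(y) + f(z) & \geq f(q) + f(y \sqcup z), \\
f(p) + f(q) & \geq f(x \wedge y) + f(z).
\end{align*}
After canceling $f(p) + f(q)$ and $2 f(z)$ from both sides, this collapses to $2 f(x) + 2 f(y) \geq 2 f(x \wedge y) + f(x \sqcup z) + f(y \sqcup z)$, which is the claimed inequality after dividing by $2$.

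The only real obstacle is the verification of $p \wedge q = x \wedge y$ and $p \sqcup q = z$. These are not instances of standard lattice absorption, because $\sqcup$ is not a genuine join on $S_k$ (it collapses incomparable pairs to $0$), so they must be checked by the exhaustive case analysis described above. Once that short check is done, the remainder of the proof is the single linear combination of four $\pmb k$-submodular inequalities.
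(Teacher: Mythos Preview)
Your proof is correct and follows essentially the same strategy as the paper: both arguments verify the componentwise identities $p \wedge q = x \wedge y$ and $p \sqcup q = x \sqcup y$ (for $p = x \wedge (x \sqcup y)$, $q = y \wedge (x \sqcup y)$) and then combine four instances of $\pmb k$-submodularity. Your choice of the pair $(x,y)$ in place of the paper's pair $(x \sqcup (x \sqcup y),\, (x \sqcup y) \sqcup y)$ is in fact slightly more economical, since it spares you the extra identity $(x \sqcup z) \sqcup (y \sqcup z) = (x \sqcup z) \wedge (y \sqcup z) = z$ that the paper needs for its fourth inequality.
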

\begin{proof}
Observe that $u \wedge (u \sqcup v)$ is $0$ if $0 \neq u \neq v \neq 0$
 and is $u$ otherwise $(u,v \in S_k)$.
From this we have
\begin{eqnarray*}
((x \sqcup y) \wedge x) \sqcup (( x \sqcup y) \wedge y) & = & x \sqcup y,  \\
((x \sqcup y) \wedge x) \wedge (( x \sqcup y) \wedge y) & = & x \wedge y.
\end{eqnarray*}
Similarly $u \sqcup (u \sqcup v)$ is $v$ if $u = 0$ and is $u$ otherwise. From this we have
\[
(x \sqcup (x \sqcup y)) \sqcup  ((x \sqcup y) \sqcup y) = 
(x \sqcup (x \sqcup y)) \wedge ((x \sqcup y) \sqcup y) = x \sqcup y.
\]
Thus we have
\begin{eqnarray*}
&& f((x \sqcup y) \wedge x) + f(( x \sqcup y) \wedge y) \geq f(x \wedge y) + f(x \sqcup y), \\
&& f(x) + f(x \sqcup y)  \geq f(x \wedge (x \sqcup y)) + f(x \sqcup (x \sqcup y) ), \\
&& f(x \sqcup y) + f(y) \geq f((x \sqcup y) \wedge y ) + f((x \sqcup y) \sqcup y), \\
&& f(x \sqcup (x \sqcup y)) + 
f((x \sqcup y) \sqcup y) \geq 2 f(x \sqcup y).
\end{eqnarray*}
Adding the first three inequalities and one half of the forth inequality, 
we obtain (\ref{eqn:k-submo'}).
\end{proof}
The binary operation $(x,y) \mapsto x \sqcup (x \sqcup y)$ 
plays important roles in the subsequent arguments.
We note the following properties:
\begin{myitem1}
\item[(1)] $x \sqcup (x \sqcup y) \succeq x$. 
\item[(2)] If $y$ has no zero component, then so does $x \sqcup (x \sqcup y)$.
\item[(3)] $(x \sqcup (x \sqcup y)) \sqcup y = x \sqcup y$ and 
$(x \sqcup (x \sqcup y)) \sqcup (x \sqcup y) = x \sqcup (x \sqcup y)$. \label{eqn:sqcup}
\end{myitem1}\noindent
These properties immediately follow from the behavior of each component:
\[
u \sqcup (u \sqcup v) = \left\{ 
\begin{array}{ll}
v & {\rm if}\ u = 0, \\
u & {\rm otherwise}, 
\end{array}\right. \quad (u,v \in S_{k}).
\]

\subsubsection{Proof of Theorem~\ref{thm:bound}}\label{subsec:bound}
Let $g$ be an L-convex function on $T^n$, 
and let $x$ be a vertex in $\dom g$. We first show the latter part. 
Suppose (w.l.o.g.) that
\begin{equation}\label{eqn:g(x)=min_g(y)}
 g(x) = \min_{y \in {\cal F}(x)} g(y).
\end{equation}
Let $x = x^0,x^1,x^2,\ldots,x^m$ be a sequence of vertices 
in generated by the steepest descent algorithm applied to $(g,x)$.
Then it holds
\begin{equation}\label{eqn:zigzag}
x = x^0 \succ x^1 \prec x^2 \succ x^3 \prec x^4 \succ \cdots.
\end{equation}
Indeed, $x^0 \succ x^1$ follows from (\ref{eqn:g(x)=min_g(y)}). 
Also $x^{i} \prec x^{i+1} \prec x^{i+2}$ (or $x^{i} \succ x^{i+1} \succ x^{i+2}$) never occurs. 
Otherwise $x^{i+2} \in {\cal F}(x^i)$ and $g(x^{i+2}) < g(x^{i+1}) < g(x^i)$; 
this is impossible from the definition of the steepest descent algorithm. 
\begin{Lem}\label{lem:geodesic}
For $z \in {\cal I}(x^k) \cup {\cal F}(x^k)$ with $g(z) < g(x^k)$, we have
\[
d(x,z) = k+1 \quad (k=1,2,\ldots).
\]
\end{Lem}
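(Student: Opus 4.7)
The plan is to prove Lemma~\ref{lem:geodesic} by induction on $k$. The upper bound $d(x,z) \leq k+1$ is immediate from the triangle inequality, since each iteration of the steepest descent algorithm shifts by $l_\infty$-distance at most $1$. The content lies in the lower bound $d(x,z) \geq k+1$, which I establish by contradiction: assume $d(x,z) \leq k$ and derive a violation of the inductive hypothesis at level $k-1$.

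The base case $k=1$ is direct: if $d(x,z) \leq 1$, then $z \in {\cal I}(x) \cup {\cal F}(x)$, and $g(z) < g(x^1)$ contradicts the minimality of $x^1$ selected in Step~1 over this set. For the inductive step, applying the inductive hypothesis to the transition $x^{j-1} \to x^j$ yields $d(x,x^j) = j$ for $1 \leq j \leq k$. By the zigzag (\ref{eqn:zigzag}) we may assume without loss of generality that $x^{k-1} \succ x^k$ (the symmetric case is analogous). The subcase $z \in {\cal I}(x^k)$ is excluded immediately: it would place $z \in {\cal I}(x^{k-1})$ with $g(z) < g(x^k)$, contradicting the choice of $x^k$; hence $z \in {\cal F}(x^k)$. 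I would then apply the midpoint inequality (\ref{eqn:midpoint}) to $(x^{k-1}, z)$, both of which lie in ${\cal F}(x^k) \simeq S_{\pmb k}$. A coordinatewise analysis of $\bullet, \circ$ on $T$ shows $(x^{k-1} \circ z)_i \in \{(x^k)_i, (x^{k-1})_i\}$ for every $i$, so $x^{k-1} \circ z \in {\cal I}(x^{k-1})$ at distance $\leq 1$ from $x^{k-1}$; together with $g(z) < g(x^{k-1})$ the midpoint inequality supplies a strict improvement at $x^{k-1} \bullet z$ or at $x^{k-1} \circ z$.

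The main obstacle is to pin down the distance $d(x, z') \leq k-1$ for the improving witness $z'$. I would carry out a coordinatewise metric analysis: any coordinate $i_0$ realising $d(x_{i_0}, (x^k)_{i_0}) = k$ is forced, using $d(x, x^{k-1}) = k-1$ and the bipartite structure of $T$, into the configuration $(x^{k-1})_{i_0} \succ (x^k)_{i_0}$ with $d(x_{i_0}, (x^{k-1})_{i_0}) = k-1$, and the value of $z_{i_0}$ in ${\cal F}((x^k)_{i_0})$ then determines the midpoint distances in that coordinate. When the strictly improving midpoint happens to be $x^{k-1} \bullet z$---whose components can lie both above and below those of $x^{k-1}$, so it need not belong to ${\cal I}(x^{k-1}) \cup {\cal F}(x^{k-1})$---I would fall back on the three-term variant (Lemma~\ref{lem:k-submo'}) on ${\cal F}(x^k)$ applied to $(x^{k-1}, z)$, producing $A = x^{k-1} \wedge z \in {\cal I}(x^{k-1})$, $B = x^{k-1} \sqcup (x^{k-1} \sqcup z) \in {\cal F}(x^{k-1})$ (both within distance $1$ of $x^{k-1}$), and $C = (x^{k-1} \sqcup z) \sqcup z$; the bounds $g(A), g(B) \geq g(x^k)$ coming from the choice of $x^k$, combined with $g(z) < g(x^k)$, would then isolate a strict improvement at $A$ or $B$ to which the coordinatewise distance analysis above transfers to yield the required contradiction with the inductive hypothesis at level $k-1$.
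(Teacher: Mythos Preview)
Your outline has the right skeleton (induction, exclude $z\in{\cal I}(x^k)$, pass to ${\cal F}(x^k)$, invoke Lemma~\ref{lem:k-submo'}), but the final step breaks down. From
\[
g(x^{k-1})+g(z)\ \ge\ g(A)+\tfrac12 g(B)+\tfrac12 g(C),
\qquad A=x^{k-1}\wedge z,\ B=x^{k-1}\sqcup(x^{k-1}\sqcup z),\ C=(x^{k-1}\sqcup z)\sqcup z,
\]
the bounds $g(A),g(B)\ge g(x^k)$ together with $g(z)<g(x^k)$ do \emph{not} force $g(A)<g(x^{k-1})$ or $g(B)<g(x^{k-1})$: taking $g(A)=g(B)=g(x^{k-1})$ is perfectly consistent (then only $g(C)$ is pushed below $g(x^{k-1})$, and $C\notin{\cal I}(x^{k-1})\cup{\cal F}(x^{k-1})$). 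So your claimed ``strict improvement at $A$ or $B$'' need not hold, and the contradiction with the inductive hypothesis does not follow. Your contradiction assumption $d(x,z)\le k$ does not rescue this either: even under it, coordinates $i$ with $(x^{k-1})_i=(x^k)_i\in W$ can give $d(x_i,B_i)=d(x_i,z_i)=k$ and coordinates with $z_{i}=(x^k)_i$ give $d(x_i,A_i)=k$, so neither $A$ nor $B$ is forced to distance $\le k-1$.

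The paper fixes exactly this by looking back one more step in the zigzag: since $B\succeq x^{k-1}\succeq x^{k-2}$ (or $B\in{\cal F}(x)$ when $k=1$), the optimality of $x^{k-1}$ over ${\cal F}(x^{k-2})$ (resp.\ assumption~(\ref{eqn:g(x)=min_g(y)})) gives the sharper bound $g(B)\ge g(x^{k-1})$, which together with $g(A)\ge g(x^k)>g(z)$ yields $g(C)<g(x^{k-1})$. Since $C$ is not in the right neighbourhood, the paper applies Lemma~\ref{lem:k-submo'} a \emph{second} time to the pair $(C,x^{k-1})$ (using~(\ref{eqn:sqcup})(3)) to produce $z':=x^{k-1}\wedge C\in{\cal I}(x^{k-1})$ with $g(z')<g(x^{k-1})$. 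Only then is the inductive hypothesis invoked to give $d(x,z')=k$, and a coordinate chase (tracing $z'_j=(x^k)_j$ back to $z_j\neq(x^k)_j,(x^{k-1})_j$) yields $d(x,z)=k+1$ directly---no contradiction framing is needed. The two ingredients you are missing are thus (i) the stronger bound $g(B)\ge g(x^{k-1})$ coming from $x^{k-2}$, and (ii) the second application of Lemma~\ref{lem:k-submo'} to convert $C$ into a point of ${\cal I}(x^{k-1})$.
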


\begin{proof}
By (\ref{eqn:zigzag}), it holds that $x^{k-1}, x^{k+1} \in {\cal F}(x^{k})$ if $k$ is odd 
and $x^{k-1}, x^{k+1} \in {\cal I}(x^{k})$ if $k$ is even. 
We use the induction on $k$.
Then we can assume that $d(x^{k-1}, x) = k-1$ and 
$d(x^{k}, x) = k$.
Suppose for the moment that $k$ is odd.
We are going to show that $d(z, x) = k+1$.
By Lemma~\ref{lem:locally_k_submo}, $g$ is $\pmb k$-submodular on ${\cal F}(x^k)$. 
Notice that both $x^{k-1}$ and $z$ belong to ${\cal F}(x^k)$.
By Lemma~\ref{lem:k-submo'}, we have
\begin{equation}
g(z) + g(x^{k-1}) \geq g(z \wedge x^{k-1}) + \frac{1}{2} g(z \sqcup (z \sqcup x^{k-1})) 
+  \frac{1}{2} g((z \sqcup x^{k-1}) \sqcup x^{k-1}). 
\end{equation}

Since $x^k$ is a minimizer of $g$ over ${\cal I}(x^{k-1})$, 
we have
\begin{equation*}
g(z \wedge x^{k-1}) \geq g(x^{k}) > g(z).
\end{equation*} 
Hence we necessarily have
\begin{equation}
2 g(x^{k-1}) > g(z \sqcup (z \sqcup x^{k-1})) +  g((z \sqcup x^{k-1}) \sqcup x^{k-1}).
\end{equation}
This implies
$g(x^{k-1}) > g(z \sqcup (z \sqcup x^{k-1}))$ 
or $g(x^{k-1}) > g((z \sqcup x^{k-1}) \sqcup x^{k-1})$.
The second case is impossible.
This follows from:  $x \preceq  (z \sqcup x) \sqcup x$ (see (\ref{eqn:sqcup})~(1)) and (\ref{eqn:g(x)=min_g(y)}) for $k=1$, and
$x^{k-2} \preceq x^{k-1} \preceq  (z \sqcup x^{k-1}) \sqcup x^{k-1}$ and
$g(x^{k-1}) = \min_{y \in {\cal F}(x^{k-2})} g(y)$ for $k > 1$.
Thus we have
\begin{equation}\label{eqn:=>>}
g((z \sqcup x^{k-1}) \sqcup x^{k-1}) \geq g(x^{k-1}) > g(z \sqcup (z \sqcup x^{k-1})).
\end{equation}
Let $z' := x^{k-1} \wedge (z \sqcup (z \sqcup x^{k-1}))$. 
Then $x^{k} \preceq z' \preceq x^{k-1}$.
By Lemma~\ref{lem:k-submo'}, we have
\begin{equation*}
g(z \sqcup (z \sqcup x^{k-1})) + g(x^{k-1}) \geq g(z') + 
\frac{1}{2} g(z \sqcup (z \sqcup x^{k-1})) + \frac{1}{2} g( (z \sqcup x^{k-1}) \sqcup x^{k-1}),
\end{equation*}
where we use (\ref{eqn:sqcup})~(3) to obtain the second and third terms in the right hand side.
By (\ref{eqn:=>>})
we have
$
g(x^{k-1}) > g(z').
$
Notice $z' \in {\cal I}(x^{k-1})$.
By induction, we have
\begin{equation}\label{eqn:|p-q'|=i}
d(x,z') = k.
\end{equation}
Hence we can take an index $j$ 
with $d(x_j,z'_j) = k$. 
By $x^k_j \preceq z'_j \preceq x^{k-1}_j$ 
(in ${\cal F}(x^{k}_j)$), 
we have $x^k_j = z'_j \prec x^{k-1}_j$ and $d(x_j,x^{k-1}_j) = k-1$. 
Since $x^{k}_j = ((x^{k-1}_j \sqcup z_j) \sqcup z_j) \wedge x^{k-1}_j$, 
we have $x^{k-1}_j \neq z_j \neq x^{k}_j$; otherwise
$x^{k-1}_j = z_j$ or $x^{k}_j = z_j$ implies a contradiction $x^{k-1}_j = x^{k}_j$.
Thus $x^{k-1}_j$ and $z_j$ are distinct neighbors of $x^k_j$ in $T$
with $d(x_j, x^{k-1}_j) = k-1$ and  $d(x_j, x^{k}_j) = k$.
Since $T$ is a tree, we have $d(x_j, z_j) = k + 1$, and $d(x, z) = k + 1$.
The argument for even $k (\geq 2)$ is same; 
reverse the partial order $\preceq$.
\end{proof}

Let ${\rm opt}(g)$ be the set of minimizers of $g$, 
and let $m^* := \min_{z \in {\rm opt}(g)} d(x,z)$.
Since $x^m$ is optimal, we have $m^* \leq m$.
Our goal is to show $m = m^*$.
Let $\tilde g$ be a function defined 
by $\tilde g(y) := g(y)$ if $d(x,y) \leq m^*$ 
and $\tilde g(y) := \infty$ otherwise.
Then $\tilde g$ is also L-convex 
since $d(x, y) \leq m^*$ and $d(x, y') \leq m^*$ imply
$d(x, y \bullet y') \leq m^*$ and $d(x, y \circ y') \leq m^*$. 
The subsequence $x = x^1,x^2,\ldots,x^{m^*}$ is also 
obtained by applying the steepest descent algorithm to 
$\tilde g$ from $x$.
By Lemma~\ref{lem:geodesic}, no vertex $z$ with 
$d(x,z) > m^*$ is produced.
Hence $x^{m^*}$ is 
necessarily a minimizer of $\tilde g$, 
and is also a minimizer of $g$. Thus $m = m^*$. 
This completes the latter part of the proof of Theorem~\ref{thm:bound}.
The former part is now immediately obtained.
Suppose that $x$ does not satisfy (\ref{eqn:g(x)=min_g(y)}).
But $x^1$ always satisfies $g(x^1) = \min_{y \in {\cal F}(x^1)} g(y)$ or $g(x^1) = \min_{y \in {\cal I}(x^1)} g(y)$.
The sequence $(x^1,x^2,\ldots,x^m)$ is also obtained by the steepest descent algorithm.
By the latter claim and the triangle inequality, 
we have $m - 1 = d(x^1, \opt(g)) \leq d(x, \opt(g)) + 1$.

\begin{Prop}\label{prop:sequence}
Let $g$ be an L-convex function on $T^n$.
For $y \in {\rm opt}(g)$ and $x \in \dom g$ with $g(x) = \min_{y \in {\cal F}(x)} g(y)$, 
there is a sequence $x = x^0,x^1,x^2,\ldots, x^m = y$ such that
\begin{itemize}
\item[{\rm (1)}] $m = d(x,y)$,
\item[{\rm (2)}] $g(x^{i}) > g(x^{i+1})$ for 
$i < d({\rm opt}(g),x)$ and $g(x^{i}) = g(x^{i+1})$ for $i \geq d({\rm opt}(g),x)$, and
\item[{\rm (3)}] $g(x^{i+1}) = \min \{ g(z) \mid z \in {\cal I}(x^{i})\}$ for even $i$ and 
$g(x^{i+1}) = \min \{ g(z) \mid z \in {\cal F}(x^{i})\}$ for odd~$i$.
\end{itemize}
\end{Prop}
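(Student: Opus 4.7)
The plan is to argue by induction on $m = d(x,y)$, refining the steepest-descent construction used in the proof of Theorem~\ref{thm:bound} (especially Lemma~\ref{lem:geodesic}) by adding a tie-breaking rule that steers the descent toward the fixed target $y$ rather than toward an arbitrary nearest minimizer. The base case $m=0$ is immediate with the singleton sequence.

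For the inductive step I would construct $x^1$ as a minimizer of $g$ over ${\cal I}(x)$ that is nearest to $y$ in the $l_\infty$-metric, and show that this choice yields $d(x^1,y) = m - 1$. The key technical tool is Lemma~\ref{lem:k-submo'} applied to the $\pmb{k}$-submodular restriction of $g$ to ${\cal F}(x^1)$ (provided by Lemma~\ref{lem:locally_k_submo}), with inputs $x$ (which is locally minimal on ${\cal F}(x)$ by hypothesis) and a suitable ``shadow'' $\tilde y \in {\cal F}(x^1)$ of~$y$: the argument parallels that of Lemma~\ref{lem:geodesic}, adapted to track the distance to the specified target $y$ rather than the distance from $x$, and the tree structure of $T$ then forces the distance to $y$ to decrease by exactly one. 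The later vertices $x^2,\dots,x^m$ are constructed analogously, alternating the roles of ${\cal I}$ and ${\cal F}$ with parity and always picking the local minimizer nearest to $y$.

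Once this sequence is in hand, properties (1)--(3) fall out as bookkeeping: (1) follows from the per-step distance decrease, (3) by the choice of $x^{i+1}$ as a local minimizer, and (2) by splitting into two phases. Strict descent for $i < m^\ast := d(\opt(g),x)$ is ensured by the L-optimality criterion (Theorem~\ref{thm:L-optimality}), since $x^i \not\in \opt(g)$ forces some ${\cal I}$- or ${\cal F}$-neighbor to have strictly smaller value. For $i \ge m^\ast$ we use that $\opt(g)$ is closed under $\bullet$ and $\circ$ (an immediate consequence of the equality case in~(\ref{eqn:midpoint})), which allows us to walk through $\opt(g)$ toward~$y$ while preserving the distance-decrease. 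The main obstacle throughout is the distance-decrease of exactly one per step; this is precisely the obstacle handled in Lemma~\ref{lem:geodesic}, and my plan is to recycle its tree-branching argument almost verbatim, substituting ``nearest to $y$'' for ``arbitrary minimizer'' at each step.
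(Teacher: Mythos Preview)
Your approach is genuinely different from the paper's, and more laborious. The paper does not build the sequence by hand at all: it perturbs $g$ to
\[
g'(z) := g(z) + \epsilon \sum_{i=1}^n d(z_i, y_i)
\]
for a sufficiently small $\epsilon>0$. The added term is L-convex (it is a sum of one-dimensional convex functions on $T$ by Theorem~\ref{thm:2separable}(1)), so $g'$ is L-convex with unique minimizer $y$. Now one simply applies the steepest descent algorithm to $(g',x)$ and invokes Theorem~\ref{thm:bound}: the resulting sequence has length exactly $d(\opt(g'),x)=d(y,x)$, ends at $y$, and satisfies (3) for $g'$. Because $\epsilon$ is tiny, every $g'$-steepest step is in particular a $g$-steepest step, which gives (3) for $g$; property (2) then follows because this same sequence is a valid run of the $g$-steepest descent, so it reaches $\opt(g)$ in exactly $d(\opt(g),x)$ strictly-decreasing steps (again Theorem~\ref{thm:bound}) and is constant thereafter.

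By contrast, your direct induction has a real soft spot. In the phase $i\ge m^\ast$ you need, at each step, a point in $\mathcal{I}(x^i)$ (or $\mathcal{F}(x^i)$) that lies in $\opt(g)$ \emph{and} is one $l_\infty$-step closer to $y$. Closure of $\opt(g)$ under $\bullet$ and $\circ$ only produces midpoints, not unit-distance neighbours, and it is not obvious how to extract the required single-step move from repeated midpointing; nor does the argument of Lemma~\ref{lem:geodesic} transfer ``almost verbatim'', since that lemma tracks distance \emph{from} the start $x$ (which it shows increases), not distance \emph{to} a prescribed target $y$. The perturbation neatly sidesteps all of this: the term $\epsilon\sum_i d(\cdot_i,y_i)$ is exactly the tie-breaker that forces the descent toward $y$, and Theorem~\ref{thm:bound} then does all the geometric work for you.
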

\begin{proof}
Take a sufficiently small $\epsilon > 0$. Consider the function $g'$ defined by
\[
g'(x) = g(x) + \epsilon \sum_{i=1}^n d(x_i, y_i) \quad (x \in T^n).
\]
By Theorem~\ref{thm:2separable} (that will be proved independently), the second term is L-convex, 
and hence $g'$ is also L-convex, and has the unique minimizer $y$.
Apply the steepest descent algorithm to $(g',x)$.
By Theorem~\ref{thm:bound}, we obtain a sequence $x = x^0,x^1,\ldots,x^m = y$ with (1), (2), and (3) for $g'$.
Since $\epsilon$ is sufficiently small, 
any steepest direction $x^{i+1}$ for $g'$ at $x^i$ is also a steepest direction for $g$.
Thus the sequence also satisfies (1),(2), and (3) for $g$.
\end{proof}

\subsubsection{Proof of Theorem~\ref{thm:persistency}}

Let $x$ be a minimizer of an L-convex relaxation $g$ of an L-extendable function $h$.
Take a minimizer $y \in B^n$ of $h$ such that $d(x,y)$ is minimum.
We can take a sequence $y = y^0,y^1,y^2,\ldots,y^m = x$ 
satisfying the conditions in Proposition~\ref{prop:sequence}, where $m = d(x,y)$.
If $m = 1$, then $x \in {\cal I}(y)$ ($y \in {\cal F}(x)$) and we are done.
Suppose (indirectly) $m \geq 2$.
Let $z := y^1$ and $w := y^2$.
Notice that $y,w \in {\cal F}(z)$.
Applying Lemma~\ref{lem:k-submo'} to $\pmb k$-submodular function $g$ on ${\cal F}(z)$, we have
\[
g(y) + g(w) \geq g(y \wedge w) + \frac{1}{2} g(y \sqcup (y \sqcup w)) + \frac{1}{2} g(w \sqcup (w \sqcup y)).
\]
Since $y$ is a maximal element in ${\cal F}(z)$ and $y \preceq y \sqcup (y \sqcup w)$, 
we have $y = y \sqcup (y \sqcup w)$, and $g(y \sqcup (y \sqcup w)) = g(y)$.
Also  it holds $g(y \wedge w) \geq g(z)  \geq g(w)$ (by Proposition~\ref{prop:sequence}~(3)).
This implies 
\[
g(y) \geq g(w \sqcup (w \sqcup y)).
\]
Here $w':= w \sqcup (w \sqcup y)$ is also maximal (in ${\cal F}(z)$) and has no zero components (see (\ref{eqn:sqcup}) (2)).
Thus $w'$ belongs to $B^n$, and is also a minimizer of $h$.
Since $d(w',w) =1$ (by $w \preceq w'$) 
and $d(w,x) = m - 2$, we have $d(x,y) > d(x,w')$. 
A contradiction to the minimality.

\subsubsection{Proof of Theorem~\ref{thm:optimality_h}}
Let $g$ be an L-convex relaxation of $h$.
Suppose that $x$ is not a minimizer of $h$.
Then $x$ is not a minimizer of $g$.
By the L-optimality criterion (Theorem~\ref{thm:L-optimality}), 
there is $z \in {\cal I}(x)$ such that $g(z) < g(x)$.
If $z$ is a minimizer of $g$, 
then by Theorem~\ref{thm:persistency} 
there is a minimizer $y \in B^n \cap {\cal F}(z)$ of $h$, as required.
Suppose that $z$ is not a minimizer of $g$.
There is $w \in {\cal F}(z)$ such that $g(w) < g(z)$.
By Lemma~\ref{lem:k-submo'} with $x \sqcup (x \sqcup w) = x$, we have
\[
g(x) + g(w) \geq g(x \wedge w) + \frac{1}{2} g(x) + \frac{1}{2} g(w \sqcup (w \sqcup x)).
\]
Notice $g(w) < g(x \wedge w)$.
Hence $g(x) > g(w \sqcup (w \sqcup x))$, and
$w \sqcup (w \sqcup x)$ is a required vertex in $B^n$ (by (\ref{eqn:sqcup})~(2)).

\subsubsection{Proof of Theorem~\ref{thm:proximity}}

We start with preliminary arguments.
For a quarter integer $u \in \ZZ/4$,  
define half-integers $[u]_1, [u]_{1/2} \in \ZZ/2$ by
\begin{equation*}
[u]_1  :=   \left\{ 
\begin{array}{ll}
  u      & {\rm if} \ u \in \ZZ/2, \\
\mbox{the integer nearest to $u$} & {\rm otherwise},  
\end{array}\right.
\end{equation*}
\begin{equation*}
[u]_{1/2} := \left\{ 
\begin{array}{ll}
u & {\rm if}\  u \in \ZZ/2, \\
\mbox{the non-integral half-integer nearest to $u$} & {\rm otherwise}.
\end{array}\right.
\end{equation*}
\begin{Lem}\label{lem:1/4}
For $u,v \in \ZZ/4$, we have
\begin{eqnarray*}
&& \lfloor u - v \rfloor \leq [u]_1 - [v]_1 \leq \lceil u - v \rceil, \\
&& \lfloor u - v \rfloor \leq [u]_{1/2} - [v]_{1/2} \leq \lceil u - v \rceil.
\end{eqnarray*}
\end{Lem}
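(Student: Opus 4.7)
The plan is to reduce the claim to an explicit description of the rounding offsets and then check the possible residues of $u$ and $v$ modulo $1$ by a short case analysis.

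First I would compute the offsets explicitly. If $u \in \ZZ/2$, both roundings leave $u$ unchanged, so $u - [u]_1 = u - [u]_{1/2} = 0$. If $u = m + 1/4$ with $m \in \ZZ$, then $[u]_1 = m$ and $[u]_{1/2} = m + 1/2$, so $u - [u]_1 = 1/4$ and $u - [u]_{1/2} = -1/4$. Symmetrically, if $u = m + 3/4$, then $u - [u]_1 = -1/4$ and $u - [u]_{1/2} = 1/4$. In every case
\[
u - [u]_1,\ u - [u]_{1/2}\ \in\ \{-1/4,\ 0,\ 1/4\},
\]
and the value $0$ occurs exactly when $u \in \ZZ/2$.

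Writing $\alpha_u := u - [u]_*$ (for either rounding), we get $[u]_* - [v]_* = (u - v) - (\alpha_u - \alpha_v)$ with $\alpha_u - \alpha_v \in \{-1/2, -1/4, 0, 1/4, 1/2\}$, and I would then split on the fractional part of $u - v$. When $u - v \in \ZZ$, the fractional parts of $u$ and $v$ modulo $1$ agree, and the explicit formulas above force $\alpha_u = \alpha_v$, so $[u]_* - [v]_* = u - v$ and both inequalities hold with equality. When $u - v \in \tfrac{1}{2} + \ZZ$, either both of $u, v$ lie in $\ZZ/2$ (so $\alpha_u - \alpha_v = 0$) or neither does (and then a direct check against the two possible pairs of residues gives $\alpha_u - \alpha_v \in \{-1/2, 0, 1/2\}$); in either case $[u]_* - [v]_*$ is one of $\lfloor u - v \rfloor$, $u - v$, or $\lceil u - v \rceil$. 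When $u - v$ has fractional part $1/4$ or $3/4$, exactly one of $u, v$ belongs to $\ZZ/2$, so $\alpha_u - \alpha_v \in \{-1/4, 1/4\}$, and $[u]_* - [v]_*$ again falls inside $[\lfloor u - v \rfloor,\ \lceil u - v \rceil]$.

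The two roundings $[\cdot]_1$ and $[\cdot]_{1/2}$ can be handled uniformly because the set $\{-1/4, 0, 1/4\}$ of offsets is the same for both; only the sign on non-half-integral inputs is flipped, which has no effect on the range of $\alpha_u - \alpha_v$ used above. The main obstacle is purely bookkeeping: keeping the six residue-pairs organized so that no case is missed. No step is conceptually deep, and the argument is entirely arithmetical.
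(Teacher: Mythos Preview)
Your proof is correct and follows essentially the same route as the paper: both are direct case analyses on the rounding offsets $\alpha_u = u - [u]_* \in \{-1/4,0,1/4\}$. The paper enumerates the nontrivial offset pairs $(\alpha_u,\alpha_v)$ directly, while you organize the same cases according to the fractional part of $u-v$; the arithmetic content is identical.
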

\begin{proof}
It suffices to consider 
the cases  $([u]_1,[v]_1) =  (u \pm 1/4, v)$, $(u, v \pm 1/4)$,  
$(u + 1/4, v - 1/4)$, or $(u - 1/4, v + 1/4)$.
Suppose that the first two cases occur.
Then $u - v$ is not a half-integer, and hence 
$\lfloor u - v \rfloor \leq u - v \pm 1/4 \leq \lceil u - v \rceil$.
Consider the last two cases.
Then $u \in \ZZ - 1/4$ and $v \in \ZZ + 1/4$ or $u \in \ZZ + 1/4$ and $v \in \ZZ - 1/4$.
Hence $u - v \in \ZZ + 1/2$. 
This means that $u - v$ is not an integer but a half-integer. 
Thus $\lfloor u - v \rfloor \leq u - v \pm 1/2 \leq \lceil u - v \rceil$.
The second inequality follows from the same argument.
\end{proof}

Let $x,y \in T^n$.
Let $P_i$ denote the unique path connecting $x_i$ and $y_i$ in $T$.
We regard vertices of $P_i$ as integers $0,1,\ldots, d_i := d(x_i,y_i)$ 
by the following way.
Associate vertex $u$ in $P_i$ with integer $d(x_i,u) \in \{0,1,\ldots, d_{i}\}$.
Then $x_i = 0$ and $y_i = d_i$.
Similarly,  let $P_i^*$ denote the unique path connecting $x_i$ and $y_i$ in $T^{*}$.
Associate the midpoint of each edge $uv$ in $P_i$ with half-integer 
$(u + v)/2 (= (d(x_i,u) + d(x_i,v))/2)$.
Then the vertices of the product $P := P_1 \times P_2 \times \cdots \times P_n$
are integer vectors $z$ with $0 \leq  z_i \leq d_i$, and the vertices of 
$P^* := P^*_1 \times P^*_2 \times \cdots \times P^*_n$ are half-integer vectors 
$z$ with $0 \leq z_i \leq d_i$.
Under this correspondence, it holds
\begin{equation*}
z \bullet z' = [ (z+ z')/2 ]_1, \quad z \circ z' = [  (z+ z')/2]_{1/2} \quad (z,z' \in P^* \subseteq (\ZZ/2)^n ),
\end{equation*}
where $[\cdot]_{1}$ and $[\cdot]_{1/2}$ are extended on $(\ZZ/4)^n$ 
in componentwise.

For $i \in \{1,2,\ldots,n\}$, let $e_i$ denote the $i$-th unit vector, 
and let $\pi_{i} := e_1 + e_2 + \cdots + e_i$.
We can assume that $d_1 \geq d_2  \geq \cdots \geq d_n$.
Then $y$ is represented as
\begin{equation}\label{eqn:expression}
y = x + \sum_{i=1}^{n} (d_{i}- d_{i+1}) \pi_{i},
\end{equation}
where we let $d_{n+1} = 0$.
Let $((x,y))$ and $((x,y))^*$ be the sets of integral points and half-integral points, respectively, 
in the polytope
\begin{equation}
Q(x,y) := \{  z \in \RR^n \mid 0 \leq z_{i} - z_{i+1} \leq d_{i} - d_{i+1} \ (1 \leq i \leq n) \},
\end{equation}
where we let $z_{n+1} := 0$.
Observe that the polytope $Q(x,y)$ is the set of points $z$ represented as
\begin{equation}\label{eqn:represented}
z = x +  \sum_{i=1}^{n} \alpha_i \pi_{i}
\end{equation}
for $\alpha_i \in [0, d_{i} - d_{i+1}]$ $(i=1,2,\ldots,n)$.
Note that this representation is unique.
\begin{Lem}\label{lem:z+z'}
For $z,z' \in ((x,y))^*$, 
both $[(z + z')/2]_1$ and $[(z + z')/2]_{1/2}$ belong to $((x,y))^*$.
\end{Lem}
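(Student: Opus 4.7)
The plan is to verify directly that both rounded vectors satisfy the linear inequalities defining $Q(x,y)$, using Lemma~\ref{lem:1/4} componentwise together with the fact that the right-hand sides $d_i - d_{i+1}$ are integers.

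First I would set $u := (z+z')/2 \in (\ZZ/4)^n$, so that the target vectors are $[u]_1$ and $[u]_{1/2}$, both of which lie in $(\ZZ/2)^n$ by definition; half-integrality is therefore immediate. It thus remains to check the chain of inequalities $0 \leq w_i - w_{i+1} \leq d_i - d_{i+1}$ for $i = 1,\ldots,n$ (with $w_{n+1} := 0$), for $w = [u]_1$ and $w = [u]_{1/2}$. Since $z_{n+1} = z'_{n+1} = 0$, we have $u_{n+1} = 0$, and $[0]_1 = [0]_{1/2} = 0$, so the convention $w_{n+1} = 0$ is consistent with applying the rounding componentwise.

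Next, I would apply Lemma~\ref{lem:1/4} to the pair $(u_i, u_{i+1})$, which gives
\[
\lfloor u_i - u_{i+1} \rfloor \;\leq\; [u_i]_r - [u_{i+1}]_r \;\leq\; \lceil u_i - u_{i+1} \rceil
\qquad (r \in \{1, 1/2\}).
\]
On the other hand, since $z, z' \in ((x,y))^*  \subseteq Q(x,y)$, both $z_i - z_{i+1}$ and $z'_i - z'_{i+1}$ lie in $[0, d_i - d_{i+1}]$, so averaging yields $0 \leq u_i - u_{i+1} \leq d_i - d_{i+1}$.

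The key observation that closes the argument is that $d_i - d_{i+1}$ is an \emph{integer}, hence $\lceil u_i - u_{i+1} \rceil \leq d_i - d_{i+1}$ and $\lfloor u_i - u_{i+1} \rfloor \geq 0$. Combining with the displayed inequality gives $0 \leq [u_i]_r - [u_{i+1}]_r \leq d_i - d_{i+1}$, which is exactly the $i$-th defining inequality of $Q(x,y)$. This works uniformly for $r = 1$ and $r = 1/2$, proving that both $[u]_1$ and $[u]_{1/2}$ lie in $((x,y))^*$. There is no real obstacle here; the only point requiring care is making sure the boundary index $i = n$ is handled by the convention $u_{n+1} = 0$, which follows from the corresponding convention for $z, z'$.
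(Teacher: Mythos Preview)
The proposal is correct and follows essentially the same argument as the paper's proof: take the midpoint $u = (z+z')/2 \in Q(x,y)$ by convexity, apply Lemma~\ref{lem:1/4} componentwise to the differences $u_i - u_{i+1}$, and use the integrality of $d_i - d_{i+1}$ to conclude that the rounded vectors satisfy the defining inequalities of $Q(x,y)$. Your treatment of the boundary index $i=n$ is slightly more explicit than the paper's, but the overall route is identical.
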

\begin{proof}
We show that 
half-integer vectors $[(z +z')/2]_1$ and $[(z + z')/2]_{1/2}$
belong to $Q(x,y)$. 
By convexity, $w := (z+ z')/2 \in (\ZZ/4)^n$ belong to $Q(x,y)$. Hence
\[
0 \leq w_i - w_{i+1} \leq d_{i} - d_{i+1}.
\]
Notice that $d_{i} - d_{i+1}$ is integral. By Lemma~\ref{lem:1/4}, we have
\[
0 \leq [w_i]_1 - [w_{i+1}]_1 \leq d_{i} - d_{i+1}, \quad 
0 \leq [w_i]_{1/2} - [w_{i+1}]_{1/2} \leq d_{i} - d_{i+1}.
\]
This means that both $[w]_1$ and $[w]_{1/2}$ belong to $Q(x,y)$.
\end{proof}
Let $h: T^n \to \overline{\RR}$ be an L-extendable function, 
and let $g: (T^*)^n \to \overline{\RR}$ be its L-convex relaxation.
The discrete midpoint convexity inequality on $P^*$ is given by
\begin{equation}\label{eqn:midpoint_P*}
g(z) + g(z') \geq g( [(z+z')/2]_{1}) + g([(z+z')/2 ]_{1/2}) \quad (z,z' \in P^*). 
\end{equation}
In particular, for $i,j \in \{1,2,\ldots,n\}$, we have
\begin{eqnarray}
g(\pi_{j}/2) + g(\pi_{i}/2 + \pi_{j}) & \geq & g(\pi_{j}) + g( \pi_{i}/2 + \pi_{j}/2), \label{eqn:pi1} \\
g(0) + g(\pi_{i}/2 + \pi_{j}/2) & \geq & g(\pi_{i}/2) + g(\pi_{j}/2), \label{eqn:pi2} \\
g(\pi_i/2 + \pi_j/2) + g(\pi_i + \pi_j) & \geq &  g(\pi_i + \pi_j/2) + g(\pi_i/2 + \pi_j), \label{eqn:pi3}\\
g(\pi_i/2) + g(\pi_i + \pi_j/2)  & \geq &  g(\pi_i/2 + \pi_j/2) + g(\pi_i). \label{eqn:pi4}
\end{eqnarray}
For example,  
$(\pi_i+3\pi_j)/4 = \pi_{i} + 3(e_{i+1} + e_{i+2} + \cdots + e_{j})/4$ for $i < j$
and $(\pi_i+3\pi_j)/4 = \pi_{j} + (e_{j+1} + e_{j+2} + \cdots + e_{i})/4$ for $i > j$.
Thus $[(\pi_i + 3 \pi_j)/4]_1 = \pi_j$ and $[(\pi_i + 3 \pi_j)/4]_{1/2} = \pi_i/2 + \pi_j/2$.
From (\ref{eqn:midpoint_P*}),
we see the first equality. 
The remaining are obtained in the same way.
\begin{Lem}\label{lem:dom}
For $x,y \in \dom h$, it holds $((x,y)) \subseteq \dom h$.
\end{Lem}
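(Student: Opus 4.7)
Since $g|_{T^n} = h$ and every integer vertex of $((x,y))$ lies in $T^n$, the lemma reduces to showing that $g(z) < \infty$ for each integer $z \in ((x,y))$. The plan is to argue by induction on $d(x,y) = d_1$. The base case $d_1 \leq 1$ is immediate, since then $((x,y))$ equals $\{x\}$ or $\{x, y\}$, both contained in $\dom h$ by hypothesis.

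For the inductive step $d_1 \geq 2$, the strategy is to certify $g$-finiteness at a target $z \in ((x,y))$ by iteratively applying the midpoint convexity (\ref{eqn:midpoint_P*}) to pairs of already-certified points in $((x,y))^*$. Here Lemma~\ref{lem:z+z'} guarantees that both rounded midpoints $[\cdot]_1$ and $[\cdot]_{1/2}$ stay inside $((x,y))^*$, while (\ref{eqn:midpoint_P*}) propagates finiteness of $g$. The seed pair $(x, y)$ immediately yields $g((x+y)/2) < \infty$, and further rounds populate $((x,y))^* \cap \dom g$ with an expanding set of half-integer points.

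Writing $z = x + \sum_i \alpha_i \pi_i$, the generic case in which some index $k$ satisfies $0 < \alpha_k < d_k - d_{k+1}$ is handled by setting $u := z - \pi_k/2$ and $v := z + \pi_k/2$: both belong to $((x,y))^*$, and $(u + v)/2 = z$ is already integer, so (\ref{eqn:midpoint_P*}) gives $2 g(z) \leq g(u) + g(v)$. Hence $g(z) < \infty$ follows once $g(u), g(v) < \infty$ is known. The latter is obtained by a nested induction: $u$ and $v$ are strictly closer to $\{x, y\}$ in a suitable monovariant and can be represented in turn as rounded midpoints of half-integer points of $((x,y))^*$ already certified via the main induction.

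The main obstacle is the corner case where $\alpha_i \in \{0,\, d_i - d_{i+1}\}$ for every $i$, so that $z$ is a vertex of the integer polytope $((x,y))$ and the choice $u, v$ above is unavailable. Then $z = x + \sum_{i \in I} \pi_i$ for some index subset $I$ (or its $y$-side reflection), and $g$-finiteness must instead be propagated along the edges of $((x,y))^*$. The specific instances (\ref{eqn:pi1})--(\ref{eqn:pi4}) of midpoint convexity are tailored to exactly this task: they relate $g$ at the half-integer ``half-edges'' $\pi_i/2$, $\pi_j/2$, $\pi_i/2 + \pi_j/2$, $\pi_i/2 + \pi_j$ and the integer targets $\pi_i$, $\pi_j$, $\pi_i + \pi_j$, and iterating them transports finiteness from $x$ first to the edge-endpoints $x + \pi_i$, then to $x + \pi_i + \pi_j$, and so on, ultimately reaching $z$. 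Organizing this chain of midpoint moves uniformly across all corner configurations — and, in particular, controlling the interplay between the two roundings $[\cdot]_1$ and $[\cdot]_{1/2}$ along the way — is the central technical difficulty.
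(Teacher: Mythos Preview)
Your plan shares the outer scaffold with the paper's proof---induction on $d(x,y)$, the restriction to $((x,y))^*$ via Lemma~\ref{lem:z+z'}, and the use of the specific midpoint identities (\ref{eqn:pi1})--(\ref{eqn:pi4})---but the two places you flag as ``nested induction on a suitable monovariant'' and ``the central technical difficulty'' are precisely where the argument has to do real work, and your plan does not supply it. Midpoint convexity applied only to pairs already known to lie in $\dom g$ does not obviously escape a low-dimensional sublocus: for instance, if several consecutive $d_i$ coincide, every rounded midpoint of points with equal corresponding coordinates again has equal coordinates there, so you cannot reach the ``off-diagonal'' vertices $\pi_i$ by your seeding scheme alone without a further idea. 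Similarly, in the generic case you reduce $g(z)<\infty$ to $g(z\pm\pi_k/2)<\infty$, but these are half-integer points and neither the outer induction on $d(x,y)$ nor the lemma statement speaks to them; the promised monovariant is never defined.

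The paper closes this gap by importing an extra ingredient you do not mention: after restricting $\dom h$ to $((x,y))$ it adds the 2-separable perturbation $z\mapsto \sum_i d(z_i,y_i)$ so that $x$ is \emph{not} a minimizer of $h$, and then invokes Theorem~\ref{thm:optimality_h} (the distance-$2$ optimality criterion for L-extendable functions) to produce a first index $j$ with $h(\pi_j)<\infty$. From there the induction hypothesis applied to the pair $(\pi_j,y)$---which has strictly smaller $l_\infty$-distance---yields $\pi_i+\pi_j\in\dom h$ for every other relevant $i$, and a single pass through (\ref{eqn:pi1})--(\ref{eqn:pi4}) then gives $\pi_i\in\dom h$. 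Thus the paper's reduction is not to an arbitrary $z$ but specifically to the vertices $\pi_i$, and the seed $\pi_j$ comes from the optimality theorem, not from iterated midpoints of $x$ and $y$. Your plan would need either this device or a concrete, terminating rule for the midpoint iteration to be complete.
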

\begin{proof}
We use the induction on $d(x,y) = k$.
We can assume that $\dom h$ belongs to $((x,y))$.
Indeed, modify $g$ and $h$ so that they take $\infty$ 
on points not belonging to $Q(x,y)$.
By Lemma~\ref{lem:z+z'}, 
$g$ is still L-convex, and hence is an L-convex relaxation of $h$.
In particular $h$ is L-extendable.
We may assume that one of $x,y$, say $x$, is not a minimizer of $h$ 
(by adding a 2-separable L-convex function 
$z  \mapsto \sum_{i=1}^n d(z_i, y_i)$ to $h, g$ if necessarily).

Use expression (\ref{eqn:expression}) to represent $x, y$. 
By induction it suffices to show that 
each $\pi_i$ with  $d_i > d_{i+1}$ belongs to $\dom h$, 
since $d(\pi_i, y) = k-1$ and 
$y = \pi_i +  (d_{i} - d_{i+1} - 1) \pi_i + \sum_{j \neq i} (d_j - d_{j+1}) \pi_j$.
By Theorem~\ref{thm:optimality_h} with expression (\ref{eqn:represented}), 
there is $j$ such that $d_{j} > d_{j+1}$ and $h(x) = h(0) > h(\pi_{j}) < \infty$.
Consider an index $i \neq j$ with $d_i > d_{i+1}$. 
We show that $\pi_i \in \dom h$.
By induction for $(\pi_j, y)$, we have $\pi_i + \pi_j \in \dom h \subseteq \dom g$.
By applying the midpoint convexity (\ref{eqn:midpoint_P*})
for $g$ at $(0, \pi_j)$ and at $(\pi_j, \pi_i + \pi_j)$, 
we have $\pi_j/2, \pi_j + \pi_i/2 \in \dom g$.
By (\ref{eqn:pi1}),
we have $\pi_i/2 + \pi_j /2  \in \dom g$.
Similarly, by (\ref{eqn:pi2}),
we have $\pi_i/2 \in \dom g$.
By (\ref{eqn:pi3})
we have $\pi_i + \pi_j /2 \in \dom g$.
Finally, by (\ref{eqn:pi4}),
we have $\pi_i \in \dom h$, as required.
\end{proof}
The essence of the proximity theorem is the following, 
where this lemma may be viewed as an analogue of \cite[(20.37)]{FujiBook}.
\begin{Lem}\label{lem:pi_i}
For $x \in \dom h$ and a minimizer $y$ of $h$ 
with $d(x, \opt(h)) = d(x,y)$, we have 
\[
h(x) > g(x + \pi_i /2) > h(x + \pi_{i}) \quad (i: d_{i} - d_{i+1} \geq 2).
\]
\end{Lem}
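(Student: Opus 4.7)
We translate so that $x=0$; then $y=\sum_{j=1}^{n}\alpha_j\pi_j$ with $\alpha_j=d_j-d_{j+1}\ge 0$, $\alpha_i\ge 2$, and $d(0,y)=d_1=d(0,\opt(h))$. By Lemma~\ref{lem:dom} (and iterating midpoint convexity as in its proof) every relevant integer and half-integer point of the box $((0,y))^*$ lies in $\dom g$, so in particular $k\pi_i$ and $(k/2)\pi_i$ lie in $\dom g$ for the relevant ranges of $k$.

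\emph{First inequality $g(0)>g(\pi_i/2)$.} Midpoint convexity of $g$ on $(0,\pi_i)$, whose midpoint $\pi_i/2$ lies in $(\ZZ/2)^n$, gives $g(0)+g(\pi_i)\ge 2g(\pi_i/2)$, so it suffices to prove $g(0)>g(\pi_i)$. I would proceed by contradiction: assuming $g(\pi_i)\ge g(0)$, applying midpoint convexity successively to the pairs $(k\pi_i,(k+2)\pi_i)$ (which is possible because $\alpha_i\ge 2$) makes the sequence $(g(k\pi_i))_{k=0}^{\alpha_i}$ discretely convex, and the nonnegative initial slope $g(\pi_i)-g(0)\ge 0$ forces it to be nondecreasing, so $g(\alpha_i\pi_i)\ge g(0)>g(y)$. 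When $y=\alpha_i\pi_i$ (the $1$-dimensional case, $\alpha_j=0$ for $j\ne i$) this already contradicts $g(y)=\min g$. For general $y$, my plan is to extend this inequality to the full minimizer by induction on the support $\{j\ne i:\alpha_j>0\}$, applying midpoint convexity to pairs whose midpoints are tracked by Lemmas~\ref{lem:1/4} and~\ref{lem:z+z'} so that both rounded points $[\cdot]_1,\,[\cdot]_{1/2}$ remain in $((0,y))^*$; each inductive step peels off one coordinate and eventually exhibits a minimizer of $h$ strictly closer to $0$ than $y$, contradicting the nearest-minimizer hypothesis.

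\emph{Second inequality $g(\pi_i/2)>g(\pi_i)$ and main obstacle.} Midpoint convexity on $(0,\pi_i)$ only upper-bounds $g(\pi_i/2)$ and so cannot prove this strict inequality directly. Assuming $g(\pi_i/2)\le g(\pi_i)$, iterating midpoint convexity along the half-integer chain $\pi_i/2,\pi_i,3\pi_i/2,\ldots,\alpha_i\pi_i$ makes $(g((k/2)\pi_i))_{k=1}^{2\alpha_i}$ discretely convex and nondecreasing, so $g(\alpha_i\pi_i)\ge g(\pi_i)$. In the $1$-dimensional case $y=\alpha_i\pi_i$ this yields $g(\pi_i)\le g(y)=\min g\le g(\pi_i)$, whence $g(\pi_i)=g(y)$ exhibits $\pi_i$ as a minimizer of $h$ strictly closer to $0$ than $y$ (since $d(0,\pi_i)=1<d_1$), contradicting nearness; the general case follows by the same support-induction. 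The principal technical obstacle in both steps is this chain-propagation from the $1$-dimensional $\pi_i$-ray to the multivariate minimizer $y$, because midpoints of pairs in $((0,y))^*$ may land in $(\ZZ/4)^n\setminus(\ZZ/2)^n$; the careful application of Lemmas~\ref{lem:1/4} and~\ref{lem:z+z'} to keep both rounded points inside $((0,y))^*$, together with local $\pmb k$-submodularity (Lemma~\ref{lem:locally_k_submo}) on appropriate filters and ideals, is where the bulk of the bookkeeping lies.
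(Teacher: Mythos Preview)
Your one-dimensional arguments are essentially sound, but the multivariate step---which you yourself flag as the ``principal technical obstacle''---is not a proof; it is a hope. Saying you will ``peel off one coordinate'' by induction on the support and ``eventually exhibit a minimizer strictly closer to $0$'' does not explain \emph{which} pairs you apply midpoint convexity to, nor why the resulting rounded points stay in $((0,y))^*$ and yield the inequalities you need. The contradiction route (assume $g(\pi_i)\ge g(0)$, propagate along the $\pi_i$-ray, then somehow reach $y$) does not connect $\alpha_i\pi_i$ to $y$ in any evident way when other $\alpha_j$ are positive, and invoking Lemmas~\ref{lem:1/4}, \ref{lem:z+z'}, and~\ref{lem:locally_k_submo} in the abstract is not a substitute for the actual chain of inequalities.

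The paper's proof is organized differently and avoids this difficulty. It inducts on $d(x,y)$ rather than on the support. In the general case it picks any $j\neq i$ with $\alpha_j\ge 1$, applies the induction hypothesis to the pair $(\pi_j,y)$ (distance one less) to obtain $g(\pi_j)>g(\pi_j+\pi_i/2)>g(\pi_i+\pi_j)$, and then transports these two strict inequalities back to the origin using four concrete midpoint-convexity identities (equations~\eqref{eqn:pi1}--\eqref{eqn:pi4}), each of which is a direct computation of $[\,\cdot\,]_1$ and $[\,\cdot\,]_{1/2}$ on explicit quarter-integer vectors. Those four identities are the missing ingredient in your sketch: they show precisely how strict descent along $\pi_i$ at base point $\pi_j$ forces strict descent along $\pi_i$ at base point $0$, and they are what replaces your unspecified ``bookkeeping.'' The paper also invokes Theorem~\ref{thm:optimality_h} (the radius-$2$ optimality criterion for L-extendable functions, proved earlier) to launch the induction, which you do not mention.
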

\begin{proof}
As above, we can assume that $\dom h$ belongs to $((x,y))$, 
and  $\dom h = ((x,y))$ (by Lemma~\ref{lem:dom}).
Then $y$ is a unique minimizer of $h$ over $((x,y))$.
We use the induction on $d(x,y)$.
Consider first the case where $y = x + (d_i - d_{i+1}) \pi_i$ for some $i$ 
with $d_i - d_{i+1} \geq 2$.
By Theorem~\ref{thm:optimality_h} with induction, 
we have $h(0) > h(\pi_i) > h(2\pi_i)$.
By $h(0) + h(\pi_i) = g(0) + g(\pi_i) \geq 2 g(\pi_i/2)$ 
we have $h(0) > g(\pi_i/2).$
By $h(\pi_i) + h(2 \pi_i) = g(\pi_i) + g(2\pi_i) \geq 2 g(3\pi_i/2)$, 
we have $g(\pi_i) > g(3\pi_i/2)$.
By $g(\pi_i/2) + g(3\pi_i/2) \geq 2 g(\pi_i)$, 
we have $g(\pi_i/2) > g(\pi_i) = h(\pi_i)$. 
Thus $h(0) > g(\pi_i/2) > h(\pi_i)$ holds.

Consider the general case.
Take $i$ with $d_i - d_{i+1} \geq 2$.
We may assume that there is $j \neq i$ with $d_j - d_{j+1} \geq 1$.
By induction for $(\pi_j, y)$, we have
\begin{equation}\label{eqn:j>i+j}
g(\pi_j) = h(\pi_j) > g(\pi_j + \pi_i/2) > g(\pi_i + \pi_j) = h(\pi_i + \pi_j). 
\end{equation}
By (\ref{eqn:j>i+j}) and (\ref{eqn:pi1}), 
we have $g(\pi_j/2) > g(\pi_i/2 + \pi_j/2)$, 
and, by (\ref{eqn:pi2}), $g(0) > g(\pi_{i}/2)$.
Similarly, by  (\ref{eqn:j>i+j}) and (\ref{eqn:pi3}), 
we have $g(\pi_i/2 + \pi_j/2) > g(\pi_i + \pi_j/2)$, 
and, by (\ref{eqn:pi4}), $g(\pi_i/2) > g(\pi_i)$.
Thus we have $h(0) = g(0) > g(\pi_i/2) > g(\pi_i) = h(\pi_i)$, as required.
\end{proof}
We are ready to prove Theorem~\ref{thm:proximity}.
Let $h$ be a midpoint L-extendable function on $T^n$, and 
let $x$ be a minimizer of $h$ over $B^n$.
Let $y$ be a minimizer of $h$ over $T^n$ with $d(x, {\rm opt}(h)) = d(x,y)$.
We can assume that $h(y) < h(x)$.
Consider $((x,y))$ as above.
Since $x$ is a minimizer of $h$ over $B^n$,
we have
$h(x) \leq h(x + \alpha \pi_i)$ for an even integer $\alpha$.
On the other hand, by Lemma~\ref{lem:pi_i}, 
we have $h(x) > h(x + \alpha \pi_{i})$ for 
$\alpha \in 1,2,\ldots, d_{i} - d_{i+1} - 1$ if $d_i - d_{i+1} \geq 2$.
This means that $d_{i} - d_{i+1} \geq 3$ is impossible. 
Thus $d_i - d_{i+1} \leq 2$ for each $i$.
Hence $d(x,y) \leq 2 n$.

Consider the case where $h$ admits as an exact L-convex relaxation.
In the proof of Lemma~\ref{lem:pi_i}, 
we can assume that $y$ is also a unique minimizer of $g$ (by perturbing $h,g$ if necessarily).
Consequently the statement of Lemma~\ref{lem:pi_i} 
holds for index $i$ with $d_i - d_{i+1} \geq 1$.
Therefore, in the above argument,  $d_{i} - d_{i+1} \geq 2$ is impossible. 
Thus we obtain the latter statement of Theorem~\ref{thm:proximity}.

\subsubsection{Proof of Lemma~\ref{lem:tree_convexity}}
It suffices to consider the case where $T$ is a path.
Hence $T$ is naturally identified with $\ZZ$, and $B = 2\ZZ$.
Suppose that $f$ is convex on $\ZZ$.
Then it is easy to see that $f(u)  + f(v) \geq f( \lfloor (u + v)/2 \rfloor) 
+ f( \lceil (u + v)/2 \rceil) = f( u \circ v) + f(u \bullet v)$.
Thus $f$ is (alternating) L-convex. The converse is also easy.
If $f:\ZZ \to \RR$ is convex, then $\bar f: \ZZ/2 \to \RR$ 
defined by $u \mapsto (f(\lfloor u \rfloor) + f( \lceil u\rceil))/2$ 
is also convex (and L-convex) on $T^*$, and $f$ is L-extendable.
The converse is also easy: the restriction of convex function on $\ZZ/2$ to $\ZZ$
is also convex on $\ZZ$. 
The latter part is straightforward to be verified.

\subsubsection{Proof of Theorem~\ref{thm:2separable}}

We begin with preliminary arguments on the convexity of a tree.
Let $\bar T$ be the set of all formal combinations of vertices  of
form $\lambda u + \mu v$, where $uv$ is an edge, and 
$\lambda$ and $\mu$ are nonnegative reals with $\lambda + \mu = 1$.
Informally speaking, 
$\bar T$ is a ``tree" obtained by filling the ``unit segment" to each edge.
We can naturally regard $T$ and 
$T^*$ as subsets of $\bar T$ (by $T^{*} \ni w_{uv} \mapsto (1/2) u + (1/2) v$).
Also the metric $d$ on $T$ is extended to $\bar T$ as follows.
For two points $p= \lambda u + \mu v$, $p' = \lambda' u' + \mu' v' \in \bar T$,
if $(u,v) = (u',v')$, then $d(p,p') := |\lambda - \lambda'| = |\mu - \mu'|$.
Otherwise we can assume that 
$d(v,v') = d(v,u) + d(u,u') + d(u',v')$.
Define $d(p,p') := \mu + d(u,u') + \mu'$.

For points $p,q \in \bar T$ and $t \in [0,1]$, 
there is a unique point $r$, denoted by $p \circ_{t} q$, 
such that $d(p,q) = d(p, r) + d(r,q)$, $d(p,r) := t d(p,q)$, 
and $d(r,q) := (1- t) d(p,q)$. 
Consider the Cartesian product $\bar T^n$.
For $x,y \in \bar T^n$, define 
$x \circ_t y := (x_1 \circ_t y_1, x_2 \circ_t y_2,\ldots, x_n \circ_t y_n)$.
A function $f$ on $\bar T^n$ is said to be {\em convex} 
if it satisfies
\[
(1 - t) f(x) +  t f(y) \geq f(x \circ_t y) \quad (t \in [0,1], x,y \in \bar T^n).
\] 
In the case where $T$ is a path of infinite length,
$\bar T$ is isometric to $\RR$, and 
this convexity coincides with the ordinary Euclidean convexity.
An old theorem in location theory, due to Dearing, Francis, and Lowe~\cite{DFL76}, says that
the distance function $d$ is convex on $\bar T^2$.
\begin{Lem}[\cite{DFL76}]\label{lem:DFL}
$d$ is convex on $\bar T^2$.
\end{Lem}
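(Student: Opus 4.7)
The plan is to reduce convexity of $d$ on $\bar T^2$ to one-variable convexity, exploiting the uniqueness of geodesics in a tree. By continuity of $d$ on $\bar T \times \bar T$ and density of dyadic rationals in $[0,1]$, it suffices to prove the midpoint inequality
\[
d\bigl(x_1 \circ_{1/2} y_1,\, x_2 \circ_{1/2} y_2\bigr) \leq \tfrac{1}{2}\bigl(d(x_1,x_2) + d(y_1,y_2)\bigr),
\]
and all quantities involved lie in the finite subtree $T_0 \subseteq \bar T$ spanned by the four points $\{x_1,y_1,x_2,y_2\}$, so I would work inside $T_0$.

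First I would establish a single-variable lemma: for any fixed $p \in \bar T$ and any geodesic $\gamma(s) := x \circ_s y$, the function $s \mapsto d(p, \gamma(s))$ is convex on $[0,1]$. Since $\bar T$ is a tree, there is a unique point $\pi$ in the image of $\gamma$ closest to $p$, and writing $\pi = \gamma(s^*)$ and $L = d(x,y)$, the path from $p$ to $\gamma(s)$ factors through $\pi$, giving
\[
d(p, \gamma(s)) = d(p,\pi) + L\,|s - s^*|,
\]
which is manifestly convex in $s$.

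For the joint statement, in a tree the geodesic from $\gamma_1(t) := x_1 \circ_t y_1$ to $\gamma_2(t) := x_2 \circ_t y_2$ decomposes canonically as the concatenation of an initial sub-arc along $[x_1,y_1]$, a ``bridge'' arc disjoint from the interiors of both $[x_1,y_1]$ and $[x_2,y_2]$, and a final sub-arc along $[x_2,y_2]$. While the combinatorial type of this decomposition is fixed, $d(\gamma_1(t),\gamma_2(t))$ is the sum of a constant bridge length and two convex functions of $t$ produced by the single-variable lemma applied to the two bridge endpoints. The main obstacle is the finitely many transition values of $t$ at which this combinatorial type changes, i.e., where the bridge endpoints jump across branch points of $T_0$. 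I would resolve this by a case analysis on the shape of $T_0$: with at most two internal branch points of degree $\geq 3$, up to relabeling $T_0$ is a segment, a $Y$, or an $H$. At each branch-point transition one checks locally that the bridge shortens by exactly the amount the two side arms lengthen, so the piecewise-linear function $t \mapsto d(\gamma_1(t),\gamma_2(t))$ is continuous with non-decreasing slopes, hence convex on $[0,1]$; specializing to $t=1/2$ yields the midpoint inequality.
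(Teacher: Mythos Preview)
The paper does not prove this lemma; it is quoted from Dearing--Francis--Lowe~\cite{DFL76} and used as a black box in the proof of Theorem~\ref{thm:2separable}. So there is nothing in the paper to compare against, and your task is simply to supply a correct proof.

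Your plan is sound and essentially complete. The reduction to midpoint convexity via continuity is standard, the restriction to the finite subtree $T_0$ spanned by $\{x_1,y_1,x_2,y_2\}$ is the right move, and your single-variable lemma (that $s\mapsto d(p,x\circ_s y)=d(p,\pi)+L|s-s^*|$ is convex) is the key ingredient. One clarification that would streamline your write-up: in the generic $H$-configuration where $[x_1,y_1]$ and $[x_2,y_2]$ are disjoint, the ``bridge'' endpoints are precisely the two branch points $a\in[x_1,y_1]$ and $b\in[x_2,y_2]$ of $T_0$, and they do \emph{not} move with $t$; hence
\[
d(\gamma_1(t),\gamma_2(t))=d(\gamma_1(t),a)+d(a,b)+d(b,\gamma_2(t))
\]
is a sum of a constant and two convex functions, with no transitions at all. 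The transitions you anticipate arise only when $[x_1,y_1]$ and $[x_2,y_2]$ intersect, which forces $T_0$ to be a segment or a $Y$, or an $H$ with a different pairing of the four leaves (e.g.\ $x_1,x_2$ on one side of the bridge and $y_1,y_2$ on the other). So your case split should really be on the relative position of the two geodesics (disjoint versus overlapping), not just on the topological type of $T_0$; for each $H$-shape there are three pairings of the four leaves into $\{x_1,y_1\}$ and $\{x_2,y_2\}$, and only one of them gives disjoint geodesics. The overlapping cases reduce to checking that a piecewise-linear function of one variable has non-decreasing slopes across at most two breakpoints, which is routine. With that adjustment your argument goes through.
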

We note local expressions of functions 
$(s,t) \mapsto h(d(s,t))$ and $(s,t) \mapsto h(d(s,a) + d(t,b))$.
\begin{Lem}\label{lem:123}
Let $h$ be an even function on $\ZZ$ and let $u, v \in W$.
\begin{itemize}
\item[{\rm (1)}] For $s,t  \in {\cal F}(u)$, we have
\begin{equation*}
 h(0) + \Delta h(1) \delta (s,t) =  h(d(s,t)).
\end{equation*} 
\item[{\rm (2)}] For $s,a \in {\cal F}(u)$, we have
\[
h(1) + \Delta h(1) \theta_{a}(s) 
=
\left\{ \begin{array}{ll}
h(d(s,a)) & {\rm if}\ a \neq u,\\
h(d(s,a) + 1) & {\rm if}\ a = u.
\end{array}\right.
\]
\item[{\rm (3)}] For $(s,t),(a,b) \in {\cal F}(u) \times {\cal F}(v)$, we have
\begin{eqnarray*}
&& h(2) + \Delta h(2) (\theta_a(s) + \theta_b(t)) + \Delta^2  h(2) \mu_{a,b}(s,t)  \\[0.5em]
&& \ = \left\{ \begin{array}{ll}
h(d(s,a) + d(t,b) + 2) & {\rm if }\ (a,b) = (u,v), \\
  h(d(s,a) + d(t,b) +1) & {\rm if }\ a = u, b \neq v\ {\rm or}\ a \neq u, b = v,\\
  h(d(s,a)+ d(t,b)) & {\rm if}\ a \neq u, b \neq v.
 \end{array}\right.  \\
\end{eqnarray*}
\end{itemize}
\end{Lem}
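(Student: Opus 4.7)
The plan is to verify each identity by direct case analysis on the positions of the arguments in the relevant filters. Recall that, since $u \in W$, the filter ${\cal F}(u)$ consists of $u$ itself (which plays the role of the zero element) together with its black neighbors, so each of $s$ and $a$ is either equal to $u$ or to a specific black neighbor; similarly for $t,b$ in ${\cal F}(v)$. The only nontrivial ingredient is the evenness of $h$, which yields the identities
\begin{equation*}
  h(2)=2h(1)-h(0), \qquad \Delta h(2k) = \Delta h(2k+1), \qquad \Delta^2 h(2k+1)=0,
\end{equation*}
used repeatedly to collapse the RHS into the claimed value.

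For part~(1) there are three configurations of $(s,t)\in{\cal F}(u)^2$ up to symmetry: $s=t=u$ (giving $\delta(s,t)=0=d(s,t)$), exactly one of $s,t$ equals $u$ (giving $\delta=1=d$), and $s,t$ distinct black neighbors of $u$ (giving $\delta=2=d$). The first two cases are immediate from the definition of $\Delta h(1)$; the third uses $h(2)=2h(1)-h(0)$.

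For part~(2), I would split on whether $a=u$. If $a=u$, then $\theta_a$ takes value $0$ at $s=u$ and $1$ otherwise, and the two sub-cases match $h(1)$ and $h(2)=h(1)+\Delta h(1)$ via evenness. If $a$ is a black neighbor of $u$, the three sub-cases $s=u$, $s=a$, $s\notin\{u,a\}$ give $\theta_a(s)\in\{0,-1,+1\}$ and $d(s,a)\in\{1,0,2\}$, and the three values of the RHS match $h(1)$, $h(0)$, $h(2)$ directly.

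Part~(3) follows the same template applied jointly to $(s,t)$. I would split on the three outer cases for $(a,b)$ (namely $a=u,b=v$; exactly one of $a=u$, $b=v$ holds; and $a\neq u, b\neq v$) and, within each, run through the sub-cases for $s\in\{u,a,\text{other neighbor of }u\}$ and $t\in\{v,b,\text{other neighbor of }v\}$. The $\Delta h(2)$ coefficients account for the ``separable'' variation, exactly as in part~(2), while $\Delta^2 h(2)\cdot\mu_{a,b}(s,t)$ supplies the correction in precisely those sub-cases where $s$ and $t$ both deviate to non-designated neighbors. The main obstacle is bookkeeping: one must carefully match the piecewise definition of $\mu_{a,b}$ against the true value of $d(s,a)+d(t,b)$ in each sub-case and invoke evenness (in particular $\Delta^2 h$ vanishing at odd arguments) whenever a $\Delta^2 h(2)$-term has to absorb an odd contribution from distances. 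No new ideas are needed beyond parts~(1)--(2) and the three evenness identities displayed above.
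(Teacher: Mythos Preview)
Your approach is correct and essentially identical to the paper's: both proceed by direct case analysis on the positions of $s,t$ (and $a,b$) within the filters, invoking the evenness of $h$ to collapse the resulting values. One minor slip: evenness gives $\Delta h(2k+1)=\Delta h(2k+2)$ (equivalently $\Delta^2 h(2k+1)=0$), not $\Delta h(2k)=\Delta h(2k+1)$ as you wrote; however, in the places you actually use it (e.g., $h(2)=h(1)+\Delta h(1)$) you apply the correct instance, so this does not affect the argument.
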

\begin{proof}
(1). Observe $d(s,t) = \delta (s,t)$.
Thus $d(s,t) = 0$ implies $h(0) + \Delta h(1) \cdot 0 = h(0)$.
Also $d(s,t) = 1$ implies $h(0) + \Delta h(1) \cdot 1 = h(1)$, and 
$d(s,t) = 2$ implies that  $h(0) + \Delta h(1) \cdot 2 
= 2 h(1) - h(0) = h(2)$ (by the evenness of $h$).

(2). Consider the case where the right hand side is equal to $h(2)$.
Then $a \neq s \neq u$ must hold.
Therefore the left hand side is $h(1) + \Delta h(1) \cdot  1 = h(2)$.
Suppose that the right hand side is equal to $h(1)$.
Then $s = u$ holds.
Therefore the left hand side is $ h(1) + \Delta h(1) \cdot  0 = h(1)$.
Suppose that the right hand side is equal to $h(0)$.
Then $s = a \neq u$ holds.
Therefore the left hand side is $h(1) + \Delta h(1) \cdot  (-1) = h(0)$. 

(3). Consider the case where the right hand side is equal to $h(4)$.
Then $u \neq s \neq a$ and  $v \neq t \neq b$ must hold.
The left hand side is equal to
$h(2) + \Delta h(2) \cdot 2 +  \Delta^2 h(2) \cdot 2= - h(2) + 2h(3) = h(4)$.
Consider the case where the right hand side is equal to $h(3)$.
Then $u \neq s \neq a$ and $v = t$ or $u = s$ and $v \neq t \neq b$ must hold.
The left hand side is equal to
$h(2) + \Delta h(2) \cdot 1 +  \Delta^2 h(2) \cdot 1= h(3)$.

Consider the case where the right hand side is equal to $h(2)$.
Then $s = u$ and $t = v$ must hold.
The left hand side is equal to
to $h(2) + \Delta h(2) \cdot 0 +  \Delta^2  h(2) \cdot 0=  h(2)$.

Consider the case where the right hand side is equal to $h(1)$.
Then $s = a = u$ and $t = b \neq v$,   $s = a \neq u$ and $t = b  = v$, 
$s = a \neq u$  and $b \neq t = v$, or $a \neq  s = u$ and $t = b \neq v$.
The left hand side is equal to
$h(2) + \Delta h(2) \cdot (-1) +  \Delta^2 h(2) \cdot 0= h(1)$.

Consider the case where the right hand side is equal to $h(0)$.
Then $s= a \neq u$ and $t = b \neq v$.
The left hand side is equal to
$h(2) + \Delta h(2) \cdot (-2) +  \Delta^2 h(2) \cdot 0= - h(2) + 2 h(1) = h(0)$.
\end{proof}

\paragraph{Proof of (1).}
Take any vertex $x$ of $T$ and its two distinct neighbors $y,y'$.
It suffices to show $h(d(y,z)) + h(d(y',z)) \geq 2 h(d(x,z))$.
Observe that $\{ d(y,z), d(y',z)\} = \{ d(x,z) + 1, d(x,z) -1\}$ or 
$d(y,z) = d(y',z) = d(x,z) +1$ holds.
For the first case, $h(d(y,z)) + h(d(y',z)) = h(d(x,z) +1) + h(d(x,z)-1) 
\geq 2h(d(x,z))$ by the convexity of $h$.
For the second case, 
$h(d(y,z)) + h(d(y',z)) \geq h(d(x,z)) + h(d(x,z)) = 2h(d(x,z))$ by the monotonicity of $h$.

\paragraph{Proof of (2).}
Extend $h: \ZZ \to \RR$ to $\ZZ/2 \to \RR$ by
$h(z) := h(z)$ if $z \in \ZZ$ and 
$h(z) := (h(z - 1/2) + h(z+ 1/2))/2$ otherwise.
%
%For $u,v \in V$, define $u \circ_{1/2} v$  by
%the unique vertex $w$ in $T^{*}$ with $d(u,v) = d(u,w) + d(w,v)$ and $d(u,w) = d(w,v)$.
%
Take $(u,v), (u',v') \in T^2$. We are going to show the discrete midpoint convexity 
for $h_{T}$:
\[
h(d(u,v))  +  h (d(u',v')) \geq h(d(u \circ u', v \circ v'))  
+ h (d(u \bullet u', v \bullet v')).
\]

Since $h$ is convex, we have
\[
h(d(u,v)) + h (d(u',v')) \geq 2 h \left(\frac{d(u,v) + d(u',v')}{2} \right).
\]
%
%Let $\odot := \circ_{1/2}$.
By Lemma~\ref{lem:DFL}, we have 
$d(u,v) + d(u',v') \geq 2 d(u \circ_{1/2} u', v \circ_{1/2} v')$.
Since $h$ is nondecreasing, we have
\[
2 h \left(\frac{d(u,v) + d(u',v')}{2} \right) \geq 2 h (d(u \circ_{1/2}  u', v \circ_{1/2} v')).
\]

\begin{Clm}
$
2  h (d(u \circ_{1/2} u', v \circ_{1/2} v')) = h(d(u \circ u', v \circ v')) 
+ h(d(u \bullet u', v \bullet v')). %\quad ((x,y), (x',y') \in V^{*} \times V^*).
$
\end{Clm}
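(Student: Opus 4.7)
My plan is to reduce the claim to the algebraic identity
\[
2h\!\left(\frac{\alpha + \beta}{2}\right) = h(\alpha) + h(\beta)
\]
where $\alpha := d(u \circ u', v \circ v')$ and $\beta := d(u \bullet u', v \bullet v')$, by first establishing the key geometric relation
\[
d(u \circ_{1/2} u',\, v \circ_{1/2} v') = \frac{\alpha + \beta}{2}.
\]

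\smallskip

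\noindent\textbf{Step 1 (distance identity).} Set $p := u \circ_{1/2} u'$, $a := u \circ u'$, $b := u \bullet u'$, and define $p', a', b'$ analogously on the $v$-side. By the definition of $\circ$ and $\bullet$, either $a = b = p$ (when $d(u,u')$ is even) or $a, b$ are adjacent in $T$ with $p$ the midpoint of the edge $ab$, and similarly for $p', a', b'$. The basic geometric fact to exploit is: for the midpoint $p$ of an edge $ab$ and any point $q \in \bar T$ that is not interior to $ab$, one has $d(p, q) = (d(a, q) + d(b, q))/2$, because in a tree the geodesic from $p$ to $q$ leaves through one endpoint of $ab$, and the two distances $d(a,q), d(b,q)$ differ by exactly $1$. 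Applying this first with $q = p'$, and then to each of $d(a, p'), d(b, p')$ with $p'$ playing the role of the midpoint, one obtains $d(p, p') = (d(a, a') + d(b, b'))/2$. Degenerate configurations -- when the two edges coincide, share an endpoint, or when one of $p, p'$ is already a vertex -- have to be checked directly, but each is a short verification using the bipartite structure of $T$.

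\smallskip

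\noindent\textbf{Step 2 (algebraic identity).} With the distance identity in hand, split into cases by whether $a = b$ and whether $a' = b'$:
\emph{(i)} If $a = b$ and $a' = b'$, then $\alpha = \beta$ and both sides equal $2h(\alpha)$.
\emph{(ii)} If exactly one equality fails, say $a = b$ but $a' \neq b'$, then $\alpha$ and $\beta$ are integer distances from a common vertex $a$ to adjacent vertices $a', b'$, hence differ by exactly $1$. Consequently $(\alpha + \beta)/2$ is a non-integral half-integer, and the identity follows immediately from the defining formula $h(z) = (h(z - 1/2) + h(z + 1/2))/2$ for $z \in \ZZ + 1/2$.
\emph{(iii)} If both equalities fail, then $a, a' \in W$ and $b, b' \in B$ with $ab$ and $a'b'$ edges of $T$. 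Since $T$ is bipartite, $\alpha = d(a, a')$ and $\beta = d(b, b')$ are both even integers. A short tree argument (using that $d(a, a')$ and $d(b, a')$ differ by $1$, and likewise $d(a, a')$ and $d(a, b')$ differ by $1$) shows $|\alpha - \beta| \in \{0, 2\}$. When $\alpha = \beta$ the identity is trivial; when $|\alpha - \beta| = 2$, say $\beta = \alpha + 2$, the value $(\alpha + \beta)/2 = \alpha + 1$ is an odd integer, and the identity is exactly the evenness hypothesis $2h(\alpha + 1) = h(\alpha) + h(\alpha + 2)$.

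\smallskip

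\noindent\textbf{Main obstacle.} The real work is in Step 1. The formula $d(p, q) = (d(a, q) + d(b, q))/2$ breaks down precisely when $q$ lies interior to the edge $ab$, so the case where the two edges $ab$ and $a'b'$ overlap (in particular when they coincide, giving $p = p'$ but $(\alpha + \beta)/2 \neq 0$ under the naive formula) must be segregated. Once this is organized cleanly by classifying how edges $ab$ and $a'b'$ meet (disjoint; sharing a white vertex; sharing a black vertex; identical), Step 2 is routine.
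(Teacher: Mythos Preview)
Your overall architecture is sound and parallels the paper's proof: both arguments split according to whether $\bar u := u\circ_{1/2}u'$ and $\bar v := v\circ_{1/2}v'$ land in $T$ or at genuine midpoints, and then invoke the evenness of $h$ in the case where the integer distances differ by $2$. The paper simply does this case analysis directly, whereas you first isolate the distance identity $d(\bar u,\bar v)=(\alpha+\beta)/2$ and then run the algebraic cases; either organization works.

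However, Step~1 as written has a real gap. Applying the midpoint formula $d(p,q)=\tfrac12\bigl(d(a,q)+d(b,q)\bigr)$ twice yields
\[
d(p,p')=\tfrac14\bigl(d(a,a')+d(a,b')+d(b,a')+d(b,b')\bigr),
\]
not $\tfrac12\bigl(d(a,a')+d(b,b')\bigr)$. To land on the latter you need the additional identity
\[
d(a,b')+d(b,a')=d(a,a')+d(b,b'),
\]
which is true here but for a reason you never state: it relies on the colouring, namely $a,a'\in W$ and $b,b'\in B$ with $ab$ and $a'b'$ edges. (One checks it by considering whether $b$ and $b'$ lie on the geodesic $[a,a']$; each of the four subcases gives equality.) Without this, the two cross-terms could in principle be strictly smaller, as the tree four-point condition only forces the two largest of the three pairwise sums to coincide.

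Separately, your ``main obstacle'' is misdiagnosed. When the edges $ab$ and $a'b'$ coincide you automatically have $a=a'$ and $b=b'$ (both $\circ$ components are white, both $\bullet$ components are black), so $\alpha=\beta=0$ and the target identity $d(p,p')=(\alpha+\beta)/2$ holds trivially; there is no danger of $(\alpha+\beta)/2\neq 0$. The genuine delicacy is the missing cross-term identity above, not the overlap case.
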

\begin{proof}
Let $\bar u := u \circ_{1/2} u'$ and $\bar v := v \circ_{1/2} v'$.
The claim is obvious when $\bar u = \bar v$ or
both $\bar u$ and $\bar v$ belong to $T$. 
So we consider the other cases.

Case 1: $\bar u\in T$ and $\bar v \not \in T$.
Then $d(\bar u, \bar v)$ is a half-integer, and hence we have
\[
2  h(d(\bar u, \bar v)) =  h(d(\bar u, \bar v) - 1/2) 
+  h(d(\bar u, \bar v) + 1/2). 
\]
Since $\bar u \in T$, we have $\bar u = u \circ u' = u \bullet u'$, and
\[
\{ d(\bar u, v \circ v'), d(\bar u, v \bullet v')\} = \{ d(\bar u, \bar v) - 1/2, d(\bar u, \bar v) +1/2\}. 
\]
Hence we have the claim.

Case 2: $\bar u \not \in T$ and $\bar v \not \in T$.
We can take edges $ss'$ and $tt'$ of $T$ such that $\bar u$ and $\bar v$ 
are the midpoints of $ss'$ and $tt'$, respectively.
In particular, $d(\bar u,s) = d(t, \bar v) =d(\bar u, s') = d(t', \bar v) = 1/2$.
We can assume that 
\[
 d(\bar u, \bar v) = d(\bar u, s)  + d(s,t) + d(t, \bar v).
\]
Then $d(\bar u, \bar v) = d(s',t) = d(s,t')$, and $d(s',t') = d(s,t) + 2$.
Suppose that $d(s,t)$ is odd.
Then $s$ and $t$ belong to different color classes; 
so $(s', t) = (u \circ u', v \circ v')$ and $(s,t') = (u \bullet u', v \bullet v')$ 
or $(s, t') = (u \circ u', v \circ v')$ and $(s',t) = (u \bullet u', v \bullet v')$. 
Thus $d(\bar u, \bar v) = d(u \circ u', v \circ v') = d(u \bullet u', v \bullet v')$, 
and the claim is true.
Suppose that $d(s,t)$ is even. 
Then $(s, t) = (u \circ u', v \circ v')$ and $(s', t') = (u \bullet u', v \bullet v')$
or  $(s', t') = (u \circ u', v \circ v')$ and $(s, t) = (u \bullet u', v \bullet v')$.
Thus we have
\[
\{  d(u \circ u', v \circ v'), d(u \bullet u', v \bullet v')  \} = \{d(s,t), d(s,t) + 2\}.
\]
By $d(\bar u, \bar v) = d(s,t) + 1$ that is odd, 
we have
\[
2  h (d(\bar u, \bar v)) = h(d(s,t)) + h(d(s,t) + 2).
\]
Thus we have the claim.
\end{proof}
Therefore $h_T$ is L-convex.
We verify the latter part of (2). 
For the case for $u = v \in W$, we obtain the formula from Lemma~\ref{lem:123}~(1).
For other cases, we have
\begin{equation}
h(d(s,t)) = h(d(s,a) + d(a,b) + d(b, t)).
\end{equation}
If $u \in W,v \in B$, then $t = b =v$, and $d(a,b) = D - 1$ is even.
By Lemma~\ref{lem:123}~(2), 
we obtain $h_T(s,t) = h(d(s,a) + D -1) = h(D) + \Delta h(D) \theta_a(s)$.
The argument for the case $u \in B,v \in W$ is the same.
Suppose that $u,v \in W$ with $u \neq v$.
Then $d(s,a) + d(t,b) + D - 2$, and $d(a,b) = D - 2$ is even.
By Lemma~\ref{lem:123}~(3) applied to  $h_T(s,t) = h(d(s,a) + d(t,b) + D -2)$, 
we obtain the required formula. 

\paragraph{Proof of (3).}  
We show the discrete midpoint convexity for $h_{T;z,w}$.
Take vertices $u,v,u',v' \in T$.
Then we have
\begin{eqnarray*}
&& h(d(u,z) + d(v,w)) + h(d(u',z) + d(v',w))  \\
&& \geq 2 h \left( \frac{d(u,z) + d(v,w) + d(u',z) + d(v',w)}{2} \right) \\
&& \geq 2 h ( d(u \circ_{1/2} u',z) + d(v \circ_{1/2} v',w)),
\end{eqnarray*}
where we use convexity of $h$ in the first inequality, 
and use the monotonicity and Lemma~\ref{lem:DFL} in the second.
\begin{Clm}
$2 h ( d(u \circ_{1/2} u',z) + d(v \circ_{1/2} v',w)) 
=  h_{T;z,w} (u \bullet u',  v \bullet v') 
+  h_{T;z,w} (u \circ u', v \circ v')$.
\end{Clm}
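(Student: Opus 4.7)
The plan is to mimic the corresponding claim used in the proof of part~(2), where the issue was a two-variable distance $d(u,v)$ rather than the present sum $d(\cdot,z)+d(\cdot,w)$. As a preliminary, extend $h$ from $\ZZ$ to $\ZZ/2$ by $h(s):=(h(s-1/2)+h(s+1/2))/2$ for non-integer half-integers $s$, so that on $\ZZ/2\setminus\ZZ$ one has $2h(s)=h(s-1/2)+h(s+1/2)$ by definition, while on odd integers the evenness assumption gives $2h(t)=h(t-1)+h(t+1)$.

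Write $\bar u:=u\circ_{1/2}u'$, $\bar v:=v\circ_{1/2}v'$, and set
\begin{equation*}
D_\circ := d(u\circ u',z)+d(v\circ v',w),\qquad
D_\bullet := d(u\bullet u',z)+d(v\bullet v',w).
\end{equation*}
The key reduction is $d(\bar u,z)+d(\bar v,w)=(D_\circ+D_\bullet)/2$: when $\bar u\in T$ one has $\bar u=u\circ u'=u\bullet u'$ and the $u$-contribution is immediate, and when $\bar u\notin T$ the point $\bar u$ is the midpoint of the edge from $u\circ u'$ to $u\bullet u'$, so a short computation in $\bar T$ gives $d(\bar u,z)=(d(u\circ u',z)+d(u\bullet u',z))/2$; the argument for $\bar v$ is symmetric. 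After this reduction the claim becomes $2h((D_\circ+D_\bullet)/2)=h(D_\circ)+h(D_\bullet)$.

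I would then split by whether each of $\bar u,\bar v$ lies in $T$. If both do, $D_\circ=D_\bullet$ and the identity is trivial. If exactly one does, then $|D_\circ-D_\bullet|=1$ because $u\circ u'$ and $u\bullet u'$ are adjacent when distinct; hence $(D_\circ+D_\bullet)/2$ is a non-integral half-integer and the identity is exactly the definition of the extension of $h$.

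The main obstacle is the case where neither $\bar u$ nor $\bar v$ lies in $T$, so both $u\circ u'\neq u\bullet u'$ and $v\circ v'\neq v\bullet v'$ are adjacent pairs. Then $D_\circ-D_\bullet\in\{-2,0,2\}$; the $0$ case is trivial, and in the $\pm 2$ case $(D_\circ+D_\bullet)/2=D_\bullet\pm 1$ is an integer, so the identity reads $2h(D_\bullet\pm 1)=h(D_\bullet)+h(D_\bullet\pm 2)$, which is the evenness of $h$ precisely when $D_\bullet\pm 1$ is odd, i.e.\ when $D_\bullet$ is even. This is where the hypothesis that $z,w$ lie in the same color class becomes essential: in this subcase $u\bullet u'$ and $v\bullet v'$ are both black, so $d(u\bullet u',z)$ and $d(v\bullet v',w)$ share the same parity (the parity of the common color of $z,w$), forcing $D_\bullet$ to be even and unlocking the evenness identity. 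The local basic $\pmb k$-submodular expressions on ${\cal F}(u)\times{\cal F}(v)$ claimed in part~(3) then follow by the same case-by-case substitution used in Lemma~\ref{lem:123}~(3).
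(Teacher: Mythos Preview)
Your proof is correct and follows essentially the same approach as the paper: the same case split on whether $\bar u,\bar v$ lie in $T$, the same use of the extension definition in the half-integer case, and the same color/parity argument (that $u\bullet u'$, $v\bullet v'$ are black and $z,w$ share a color, forcing even $D_\bullet$) to invoke evenness in the $\pm 2$ case. Your packaging via the midpoint-average identity $d(\bar u,z)=(d(u\circ u',z)+d(u\bullet u',z))/2$ and the reduction to $2h((D_\circ+D_\bullet)/2)=h(D_\circ)+h(D_\bullet)$ is a bit cleaner than the paper's per-case arithmetic, but the content is the same.
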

\begin{proof}
Let $\bar u := u \circ_{\mbox{\tiny$1/2$}} u'$ and $\bar v := v \circ_{\mbox{\tiny$1/2$}} v'$.

Case 1: $\bar u \in T$ and  $\bar v \in T$.
In this case, $\bar u = u \bullet u' = u \circ u'$ 
and $\bar v = v \bullet v' = v \circ v'$ hold, 
and hence the claim holds.

Case 2: $\bar u  \not \in T$ and 
$\bar v  \in T$.
In this case, there is an edge $st$ of $T$ 
such that $\bar u$ is the midpoint of $st$.
We can assume that $d(z,s) = d(z,t) + 1$ and $d(z, \bar u) = d(z,t) + 1/2$.
Since $d(\bar u,z) + d(\bar v, w)$ is not an integer, we have
\begin{eqnarray*}
 2 h( d(\bar u,z) + d(\bar v, w)) & = & h(d( \bar u ,z) + d(\bar v, w) +1/2) + 
 h(d(\bar u,z) + d(\bar v,w) -1/2) \\
&= & h(d(s,z) + d(\bar v,w)) + h(d(t, z) + d(\bar v,w)). 
\end{eqnarray*}
Then the claim follows from 
$\bar v = v \bullet v' = v \circ v'$, and
$\{ u \bullet u', u \circ u'\} = \{s, t\}$.

Case 3: $\bar u \not \in T$ and $\bar v \not \in T$.
Let $ab$ denote the edge such that $\bar u$ is the midpoint of $ab$, 
and let $st$ denote the edge such that $\bar v$ is the midpoint of $st$.
We can assume that 
$(a,b) = (u \bullet u', u \circ u')$ and $(s,t) = (v \bullet v', v \circ v')$.

Case 3.1: $d(a,z) - d(b,z) = d(s,w) - d(t, w) \in \{-1,1\}$. 
Since $z,w$ have the same color and $a,s$ have the same color, 
$d(\bar u,z) + d(\bar v, w) = d(a,z) + d(s,w) \pm 1$ must be odd.
Hence we have
\begin{eqnarray*}
2 h( d(\bar u,z) + d(\bar v,w)) 
& = &  h(d(\bar u,z)+ d(\bar v, w) - 1) +  h(d(\bar u,z)+ d(\bar v,w) + 1) \\
& = &  h(d(a,z)+ d(s, w)) + h(d(b,z)+ d(t,w))
\end{eqnarray*}
as required.

Case  3.2: $d(a,z) - d(b,z) \neq d(s,w) - d(t,w)$.
In this case,  
$d(\bar u,z) + d(\bar v,w) = d(a,z) + d(s,w) = d(b, z)+ d(t, w)$, 
and hence we have the claim.
\end{proof}
We show the latter part of (3).
It holds that $d(s,z) + d(t,w) = d(s,a) + d(t,b) + d(a,z) + d(b,w)$.
Suppose that $u \in W, v \in B$. 
Then ${\cal F}(v) = \{v\}$, $b = v = t$, and $D$ is odd.
Suppose that $z \neq u$. Then $a \neq u$, and 
$d(s,z) + d(t,w) = d(s,a) + D - 1$. 
Thus by Lemma~\ref{lem:123}~(2), 
we have $h_{T;z,w} (s,t) = h(d(s,a) + D - 1) = 
h(D) + \Delta  h(D) \theta_a(s)$.
Suppose that $z = u$. Then $a = u$, and 
$d(s,z) + d(t,w) = d(s,a) + 1 + (D - 1)$.
By Lemma~\ref{lem:123}~(2), we have $h_{T;z,w} (s,t) = h(d(s,a) + 1+ D - 1) = 
 h(D) + \Delta h(D) \theta_a(s)$.
The argument for  $u \in B, v \in W$ is similar.

Suppose that $u,v \in W$.
If $u \neq z$ and $v \neq w$, then $a \neq u$ and $b \neq v$.
Also $d(s,z) + d(t,w) = d(s,a) + d(t,b) + D - 2$, and $D$ is even. 
Apply Lemma~\ref{lem:123}~(3) to $h_{T;z,w} (s,t) = h(d(s,a) + d(t,b) + D - 2)$, 
we obtain the formula.

If $u = z$ and $v \neq w$ or $u \neq z$ and $v = w$, then 
$u = a$ and $v \neq b$ or $u \neq a$ and $v = b$, 
and we have $d(s,z) + d(t,w) = d(s,a) + d(t,b) + D - 1$ with $D$ even. 
By Lemma~\ref{lem:123}~(2) to $h_{T;z,w} (s,t) = h(d(s,a) + d(t,b) + 1 + (D - 2))$,
we obtain the formula.
If $u = z$ and $v = w$, then $u = a$, $v = b$, and
$d(s,z) + d(t,w) = d(s,a) + d(t,b)$ with $D = 0$.
By Lemma~\ref{lem:123}~(2) to $h_{T;z,w} (s,t) = h(d(s,a) + d(t,b))$, we obtain the formula.

\subsubsection{Proof of Theorem~\ref{thm:approx}}

(1). By Theorem~\ref{thm:2separable}, 
all functions $h_T, h_{T,z}, h_{T,z,w}$ are locally basic $k$-submodular.
Hence, by Theorem~\ref{thm:IWY14}, 
the minimization of $\bar \omega$ over ${\cal I}(x)$ and ${\cal F}(x)$ 
can be done in $O({\rm MF}(kn, km))$ time.
By Theorem~\ref{thm:bound}, 
the steepest descent algorithm for $\omega$ with initial point $x \in B^n$
iterates $d({\rm opt} (\bar \omega), x)$ steps 
to obtain an optimal solution $x^*$ of $\bar \omega$.
Hence the total time is $O( d({\rm opt} (\bar \omega), x){\rm MF}(k n, k m))$.

(2).
Let $u:= (x^*)_{\rightarrow y}$.
It suffices to show
\begin{equation*}
f_{ij}(d(u_i, z_j)) \leq 2 \bar f_{ij}(x^*_i, z_j), \quad g_{ij}(d(u_i, u_j)) \leq 2 \bar g_{ij}(x^*_i, x^*_j)
\end{equation*}
for each $i,j$.

We may suppose that $x^*_i  \neq u_i$.
Namely $x^*_i$ is a white vertex, and $d(x^*_i, z_j)$ is odd.
Thus $\bar f_{ij} (d(x^*_i, z_j)) = \{f_{ij} (d(x^*_i, z_j) - 1) + f_{ij} (d(x^*_i, z_j) +1)\}/2$.
If $d(u_i, z_j) = d(x^*_i, z_j) - 1$, then by monotonicity and nonnegativity
we have $\bar f_{ij}(d(u_i, z_j)) \leq \bar f_{ij} (d(x^*_i, z_j)) \leq 2 \bar f_{ij} (d(x^*_i, z_j))$.
If $d(u_i, z_j) = d(x^*_i, z_j) + 1$, then by nonnegativity we have
\[
f_{ij}(d(u_i, z_j)) = 2 \bar f_{ij} (d(x^*_i, z_j)) - f_{ij} (d(x^*_i, z_j) - 1) \leq 2 \bar f_{ij} (d(x^*_i, z_j)).
\] 

Next consider $\bar g_{ij}(d(x^{*}_i, x^{*}_j))$.
Suppose that $x^*_j$ is black. 
Then $x^*_j = u_j$ and $d(x^{*}_i, x^{*}_j)$ is odd.
Thus $\bar g_{ij}(d(x^{*}_i, x^{*}_j)) = \{ g_{ij}(d(x^{*}_i, x^{*}_j) - 1) +  g_{ij}(d(x^{*}_i, x^{*}_j) + 1)\}/2$.
By the same argument above, we have $g_{ij}(d(u_i,u_j)) \leq 2 \bar g_{ij}(d(x^{*}_i, x^{*}_j))$.
Suppose that $x^*_j$ is also white; $d(x^{*}_i, x^{*}_j)$ is even.
If the unique path between $x^*_i$ and $y$ contains $x^*_j$, 
then $d(x^*_i,x^*_j) = d(u_i, u_j)$, and $g_{ij}(d(u_i,u_j)) 
= \bar g_{ij}(d(x^{*}_i, x^{*}_j)) \leq 2 \bar g_{ij}(d(x^{*}_i, x^{*}_j))$.
The same holds for the case where the unique path between $x^*_j$ and $y$ contains $x^*_i$.
Suppose not.
There is a vertex $m$ in the path between $x^*_i$ and $x^*_j$
such that $m$ belongs to the path between $y$ and $x^*_i$ 
and the path between $y$ and $x^*_j$. 
Therefore the rounded $u_i$ and $u_j$ are closer to $m$.
Hence $d(u_i,u_j) \leq d(x^{*}_i, x^{*}_j)$, and 
by monotonicity we have 
$g_{ij}(d(u_i,u_j)) \leq \bar g_{ij}(d(x^{*}_i, x^{*}_j)) \leq 2 \bar g_{ij}(d(x^{*}_i, x^{*}_j))$.

\section{Minimum cost multiflow}\label{sec:multiflow}
In this section, as an application of the results in the previous section, 
we provide
a new simple combinatorial (weakly) polynomial time algorithm to solve 
minimum cost maximum free multiflow problem.
The design of such an algorithm was the original motivation of 
developing the theory of L-extendable functions.

Let ${\cal N}$ be an undirected network on node set $V$, 
edge set $E$, terminal set $S \subseteq V$, and edge-capacity $c: E \to \ZZ_+$.
An {\em $S$-path} in ${\cal N}$ is a path connecting distinct terminals in $S$.
A {\em multiflow} $f$ is a pair $({\cal P}, \lambda)$ of a (multi-)set ${\cal P}$ of $S$-paths and
a nonnegative-valued function $\lambda: {\cal P} \to \RR_+$ satisfying
the capacity constraint:
\begin{equation}
f(e) := \sum \{ \lambda (P) \mid \mbox{$P \in {\cal P}$: $P$ contains $e$ } \}  \leq c(e) \quad (e \in E).
\end{equation}
If $2 \lambda$ is integer-valued, then $f$ is called {\em half-integral}.
For distinct terminals $s,t \in S$,
 let $f(s,t)$ 
denote the total value of the $(s,t)$-flow in $f$, i.e.,
\[
f(s,t) := \sum \{ \lambda (P) \mid \mbox{$P \in {\cal P}$: $P$ is an $(s,t)$-path }\}.
\]
Let $f(s)$ denote the total value on flows connecting $s$, 
i.e., $f(s) := \sum_{t \in S \setminus \{s\}} f(s,t)$.
Then the total flow-value $v_f$ is defined by
\[
v_f := \sum_{P \in {\cal P}} \lambda(P) = \sum_{s,t \in S: s \neq t} f(s,t) = \frac{1}{2} \sum_{s \in S} f(s).
\]
The {\em maximum free multiflow problem} (MF) is:
\begin{description}
\item[(MF)] Find a multiflow $f$ having the maximum total flow value $v_f$. 
\end{description}
A {\em maximum free multiflow} is a multiflow having the maximum total flow-value.

Observe that $f(s)$ is the total flow value of the $(s, S \setminus \{s\})$-flow in $f$, and hence is at most  the minimum value $\kappa_s$ of an $(s, S \setminus \{s\})$-cut.
Thus the total flow-value of any multiflow is at most $\sum_{s \in S} \kappa_s/2$.
A classical theorem by Lov\'asz~\cite{Lov76} and 
Cherkassky~\cite{Cher77} says that 
this bound is always attained by a half-integral multiflow.
\begin{Thm}[\cite{Cher77, Lov76}]\label{thm:LC}
The maximum flow-value of a free multiflow is equal to
\[
\frac{1}{2} \sum_{s \in S} \kappa_s,
\]
and there exists a half-integral maximum free multiflow.
\end{Thm}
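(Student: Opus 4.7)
The plan is to prove the two halves separately. The upper bound $v_f \le \tfrac{1}{2}\sum_s \kappa_s$ is essentially immediate: for any multiflow $f$ and any $s \in S$, contracting $S \setminus \{s\}$ into a single sink turns the $S$-paths incident to $s$ into an ordinary $(s, S\setminus\{s\})$-flow in ${\cal N}$, so $f(s) \le \kappa_s$; since each $S$-path in the decomposition of $f$ is counted once at each endpoint, $v_f = \tfrac12 \sum_s f(s) \le \tfrac12 \sum_s \kappa_s$.

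For the matching lower bound I would first establish that the bound is tight as an LP value, and then separately argue half-integrality. The natural LP-relaxation of (MF) is to maximize $\sum_{P} \lambda(P)$ subject to $\sum_{P \ni e} \lambda(P) \le c(e)$ over nonnegative $\lambda$ indexed by the $S$-paths $P$. Its dual is a metric-packing problem: find $y : E \to \RR_+$ minimizing $\sum_e c(e) y(e)$ subject to $\sum_{e \in P} y(e) \ge 1$ for every $S$-path $P$. Given a minimum $(s, S\setminus\{s\})$-cut $X_s$ for each $s \in S$, I would take $y(e) := \tfrac12 |\{ s \in S : e \in \delta X_s \}|$. Any $(s,t)$-path crosses both $\delta X_s$ and $\delta X_t$, so $\sum_{e \in P} y(e) \ge 1$, and the objective evaluates to $\tfrac12 \sum_{s} c(\delta X_s) = \tfrac12 \sum_{s} \kappa_s$. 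Weak LP duality combined with the primal upper bound already establishes the formula for the fractional optimum.

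The remaining task, which is the main obstacle, is to upgrade fractional attainment to half-integral attainment. The plan is to double all capacities (working with $2c$), seek an integral optimum multiflow in the doubled network by a combinatorial augmentation procedure, and then halve the resulting flow values. Equivalently, one shows that every vertex of the $S$-path packing polyhedron $\{ \lambda \ge 0 : \sum_{P \ni e} \lambda(P) \le c(e) \}$ is half-integral, by an uncrossing/exchange argument: given a fractional optimum, one identifies a symmetric-difference ``cycle'' in path-space that can be cancelled while preserving feasibility, decreasing a potential function and eventually landing at a half-integral point. The subtlety is that this exchange must respect the multi-terminal structure, since a naive augmentation along two fractional $S$-paths can create odd cycles through terminals; the key observation is that any such odd configuration can always be rewritten as a pair of half-integral $S$-path contributions, which terminates the rounding. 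Either of these routes recovers exactly the statement of Lov\'asz and Cherkassky.
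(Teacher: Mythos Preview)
The paper does not prove Theorem~\ref{thm:LC}; it is quoted as a classical result of Lov\'asz and Cherkassky and used as a black box (for instance in the proof of Lemma~\ref{lem:NvsL} and in Section~\ref{subsub:LC}). So there is no ``paper's own proof'' to compare against, and your write-up would have to stand on its own.

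On its own merits: your upper bound and the LP-duality computation showing that the fractional optimum equals $\tfrac12\sum_s\kappa_s$ are correct and standard. The gap is entirely in the half-integrality step. What you describe there is not a proof but a hope: ``identify a symmetric-difference cycle in path-space that can be cancelled,'' ``any odd configuration can always be rewritten as a pair of half-integral $S$-path contributions.'' These phrases name the difficulty rather than resolve it. The actual combinatorial content of the Lov\'asz--Cherkassky theorem lies precisely here, and the classical proofs do \emph{not} proceed by LP-rounding or vertex-uncrossing of the path polytope; they use either Lov\'asz's splitting-off at an inner Eulerian vertex (reducing to a smaller Eulerian instance while preserving all $\kappa_s$) or Cherkassky's decomposition via locking single-commodity maximum flows. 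If you want a self-contained argument, one of those two routes is what you should write out; the polytope-rounding sketch as stated does not give you a terminating procedure.
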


Karzanov~\cite{Kar79} considered a minimum cost version of (MF).
Now the network ${\cal N}$ has a nonnegative edge-cost $a: E \to \ZZ_+$.
The total {\em cost} $a_f$ of multiflow $f$ is defined by
%\begin{equation}
$a_f := \sum_{e \in E} a(e) f(e).$
%\end{equation}
%
%
The {\em minimum cost maximum free multiflow problem} is:
\begin{description}
\item[(MCMF)] Find a maximum free multiflow having the minimum total cost.
\end{description}

There are two approaches to solve this problem. 
The first one is based on the following 
auxiliary maximum multiflow problem with a positive parameter $\mu > 0$:
\begin{description}
\item[(M)] Find a multiflow $f$ maximizing $\mu v_f - a_f$.
\end{description}
If $\mu$ is sufficiently large, then 
an optimal multiflow in (M) is a minimum cost maximum free multiflow. 
Karzanov~\cite{Kar79} showed the half-integrality of (M).
\begin{Thm}[\cite{Kar79}]\label{thm:halfintegral}
For any $\mu \geq 0$, there exists a half-integral optimal multiflow in~$(M)$.
\end{Thm}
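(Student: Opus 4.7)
The plan is to reduce (M) to a maximum free multiflow problem in an auxiliary network and then invoke Theorem~\ref{thm:LC}.

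First I would rewrite the objective of (M) in path form: for a multiflow $f = ({\cal P}, \lambda)$,
\[
\mu v_f - a_f = \sum_{P \in {\cal P}} \lambda(P)(\mu - a(P)),
\]
where $a(P) := \sum_{e \in P} a(e)$. In particular, an optimal $f$ carries flow only on paths with $a(P) \leq \mu$, and each such path contributes a nonnegative term.

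Next I would construct an auxiliary network ${\cal N}_\mu$ whose maximum free multiflow value encodes the optimum of (M). Starting from ${\cal N}$, I would subdivide each edge $e$ of cost $a(e)$ into $a(e) + 1$ edges of capacity $c(e)$, so that the additive cost $a(P)$ of an $S$-path $P$ in ${\cal N}$ becomes its combinatorial length in ${\cal N}_\mu$. Then I would attach a gadget --- a new ``cap'' terminal joined to each newly created internal vertex --- so that every unit of original-terminal flow along a path of cost $a(P)$ in ${\cal N}_\mu$ is charged $a(P)$ units of lost capacity at the cap, exactly realizing the penalty $-a(P)$ in the objective. A short bookkeeping then shows that maximum free multiflows in ${\cal N}_\mu$ are in one-to-one correspondence with optimal multiflows for (M), and the correspondence preserves half-integrality. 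Applying Theorem~\ref{thm:LC} to ${\cal N}_\mu$ yields a half-integral maximum free multiflow in ${\cal N}_\mu$, whose projection is a half-integral optimum for (M).

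The main obstacle is setting up the gadget so that (i) the bijection between multiflows is exact, and (ii) half-integrality is preserved under projection --- the naive subdivision can allow parasitic dummy-to-dummy flows that spoil the correspondence, so the cap attachment and its capacities must be tuned carefully. A cleaner alternative, at the cost of a strengthening of Theorem~\ref{thm:LC}, is polyhedral: the path-indexed free multiflow polytope
\[
\Bigl\{ x \in \RR_+^{{\cal P}_S} \bigm| \sum_{P \ni e} x_P \leq c(e) \ (e \in E) \Bigr\}
\]
can be shown to have only half-integral vertices by the cycle-exchange reasoning underlying Theorem~\ref{thm:LC}: any fractional vertex $x^*$ admits a nonzero signed perturbation $\Delta$ supported on ${\rm supp}(x^*)$ preserving all tight edge-constraints, contradicting extremality unless every fractional part of $x^*_P$ lies in $\ZZ/2$. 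Since (M) is a linear program over the face $\{x_P = 0 : a(P) > \mu\}$ of this polytope, its optimum is attained at a half-integral vertex. Either route yields the claim; the polyhedral approach is more concise, while the reduction approach is more elementary but combinatorially delicate.
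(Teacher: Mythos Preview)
The paper does not give its own proof of this theorem; it is quoted from Karzanov~\cite{Kar79} as background. The paper's later machinery---the dual $(D')$, its half-integrality, and the double-covering network in Section~\ref{subsec:double_covering} and the paragraph ``Algorithm for (M)''---does implicitly re-derive the result via LP duality and a circulation argument, but that is not presented as a proof of Theorem~\ref{thm:halfintegral} and bears no resemblance to your proposal.

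Both of your routes have genuine gaps.

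For the gadget reduction, you never specify the cap attachment or its capacities, and you never verify that the maximum free multiflow value in ${\cal N}_\mu$ equals the optimum of (M). You yourself flag that parasitic flows may spoil the correspondence. Encoding the mixed objective $\mu v_f - a_f$ as a pure maximum free multiflow is not a standard reduction, and the ``short bookkeeping'' you defer is precisely the substance of the proof. As written this is an idea, not an argument.

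The polyhedral route has a sharper flaw. Your key claim---that every vertex of the path-indexed polytope
\[
\Bigl\{ x \in \RR_+^{{\cal P}_S} \Bigm| \sum_{P \ni e} x_P \leq c(e)\ (e \in E) \Bigr\}
\]
is half-integral---is at least as strong as the theorem you are proving, and in fact strictly stronger: the objectives $\sum_P (\mu - a(P)) x_P$ with edge-additive $a(P)$ form only a restricted class of linear functionals on $\RR^{{\cal P}_S}$. Your sketched perturbation argument does not establish it. The existence of a nonzero $\Delta$ supported on ${\rm supp}(x^*)$ preserving all tight constraints contradicts extremality outright; nothing in the argument distinguishes components in $\ZZ/2$ from other fractions, so the clause ``unless every fractional part of $x^*_P$ lies in $\ZZ/2$'' is unsupported. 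Invoking ``the cycle-exchange reasoning underlying Theorem~\ref{thm:LC}'' is circular: Lov\'asz--Cherkassky concerns one specific objective (maximize $\sum_P x_P$) and is proved by splitting-off at inner vertices, not by a polyhedral vertex analysis. The half-integrality you need is essentially Karzanov's theorem itself and requires real work---either the original combinatorial argument of~\cite{Kar79} or the duality/uncrossing approach of~\cite{Kar94, HHMPA}.
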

In particular there always exists a half-integral minimum cost free multiflow.
The previous algorithms~\cite{GK97, Kar79, Kar94} are based on this formulation.

The second approach, which we will mainly deal with,  
is based on a node-demand multiflow formulation. 
We are further given a nonnegative {\em demand} $r: S \to  \ZZ_+$ 
on terminal set $S$.
A multiflow $f$ is said to be {\em feasible} (to $r$) if $f(s) \geq r(s)$ for $s \in S$.
The {\em minimum cost feasible free multiflow problem} (N) is:
\begin{description}
\item[(N)] Find a feasible multiflow having the minimum total cost.
\end{description}
This problem (N) can solve (MCMF). Indeed, for each $s \in S$, 
add new non-terminal node $\bar s$ 
and new edge $s\bar s$ with capacity $\kappa_s$ and zero cost, and
replace each edge $is$ by $i \bar s$.
Let $r(s) := \kappa_s$ for $s \in S$.
Any feasible multiflow $f$ for the new network must satisfy $f(s) = r(s) = \kappa_s$.
After contracting edges $s\bar s$, 
the resulting $f$ is necessarily a maximum free multiflow in the original network.
Also all maximum free multiflows are obtained in this way.
Indeed, by Theorem~\ref{thm:LC}, 
a maximum free multiflow is simultaneously 
a maximum single commodity $(s, S \setminus \{s\})$-flow for $s \in S$. 
Hence $f(s) = \kappa_s$ must hold, 
and $f$ is extended to a feasible multiflow in the new network.

The problem (N) itself seems natural and fundamental,  but has not been well-studied so far. 
Also we do not know whether (N) reduces to (M), and whether (M) reduces to (N).
Recently Fukunaga~\cite{Fukunaga14} addressed the problem (N) 
in connection with a class of network design problems, 
called (generalized) {\em terminal backup problems}~\cite{AK11, BKM13, XAC08INFOCOM}.
As was noted by him, 
the problem (N) is also formulated as the following cut-covering linear program.
\begin{eqnarray*}
\mbox{\bf (L)} \quad {\rm Min.} &&  \sum_{e \in E} a(e) x(e) \nonumber \\
{\rm s. t. } &&   x(\delta X) \geq r(s) \quad (s \in S, X \in {\cal C}_s), \nonumber \\
&& 0 \leq x(e) \leq c(e) \quad (e \in E),
\end{eqnarray*}
where ${\cal C}_s$ denotes the set of node subsets $X \subseteq V$ 
such that $X$ contains $s$ and does not contain other terminals.
The problems (N) and (L) are equivalent in the following sense.
\begin{Lem} [{see \cite{Fukunaga14}}] \label{lem:NvsL}
\begin{itemize}
\item[{\rm (1)}] For an optimal solution $f$ of (N), 
the flow-support $x: E \to \RR_+$ defined by $x(e) := f(e)$ $(e \in E)$ is an optimal solution of (L).
\item[{\rm (2)}] For an optimal solution $x: E \to \RR_+$ of (L), 
a feasible multiflow $f$ in ${\cal N}$ with the capacity $x$ exists, and
is optimal to (N).
\end{itemize}
In particular the optimal values of the two problems are the same.
\end{Lem}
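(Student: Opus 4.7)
The plan is to prove parts (1) and (2) separately, with part (2) being the substantive step and part (1) following almost directly from the definitions. The mutual optimality then follows by a short sandwich argument, and the equality of optima drops out for free.

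For part (1), I first verify that $x(e) := f(e)$ is feasible for (L). The capacity bound $x(e) \leq c(e)$ is immediate from the definition of a multiflow. For the cut-covering inequality, fix $s \in S$ and $X \in {\cal C}_s$. Every $S$-path in the support of $f$ that has $s$ as an endpoint must cross $\delta X$ at least once, because $X$ contains $s$ but no other terminal. Summing path-values shows $x(\delta X) = f(\delta X) \geq f(s) \geq r(s)$. Optimality of this $x$ in (L) will follow once part (2) gives the reverse inequality on optima.

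For part (2), the key step is constructing a feasible multiflow from the LP solution $x$. Consider the auxiliary network ${\cal N}_x$ obtained by replacing the capacity $c$ with $x$. For each $s \in S$, the cut constraint in (L) says $\kappa_s(x) := \min_{X \in {\cal C}_s} x(\delta X) \geq r(s)$, where $\kappa_s(x)$ is the minimum $(s, S\setminus\{s\})$-cut value in ${\cal N}_x$. Applying Theorem~\ref{thm:LC} (Lov\'asz--Cherkassky) to ${\cal N}_x$ produces a free multiflow $f$ of total value $v_f = \frac{1}{2}\sum_s \kappa_s(x)$. Since $f(s) \leq \kappa_s(x)$ for each $s$ individually (as $f(s)$ is the flow across any $(s, S\setminus\{s\})$-cut) and $\sum_s f(s) = 2 v_f = \sum_s \kappa_s(x)$, equality must hold throughout, giving $f(s) = \kappa_s(x) \geq r(s)$ for every $s$. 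Therefore $f$ is feasible for (N), uses capacity at most $x$, and has cost $a_f \leq \sum_e a(e) x(e)$.

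It remains to conclude optimality. Let $f^*$ and $x^*$ denote optima of (N) and (L). Applying part (1)'s construction to $f^*$ yields a feasible $x$ for (L) with $\sum_e a(e) x(e) = a_{f^*}$, so ${\rm opt}({\rm L}) \leq {\rm opt}({\rm N})$. Applying part (2)'s construction to $x^*$ yields a feasible $f$ for (N) with $a_f \leq \sum_e a(e) x^*(e) = {\rm opt}({\rm L})$, so ${\rm opt}({\rm N}) \leq {\rm opt}({\rm L})$. Hence the two optima coincide, and the $x$ and $f$ produced in parts (1) and (2) are automatically optimal for their respective problems. The only subtle point is the passage from the aggregate Lov\'asz--Cherkassky bound $v_f = \tfrac{1}{2}\sum_s \kappa_s(x)$ to the per-terminal equalities $f(s) = \kappa_s(x)$; this is the main obstacle, but it is dispatched by the simple averaging identity $\sum_s f(s) = 2 v_f$ combined with the pointwise bound $f(s) \leq \kappa_s(x)$.
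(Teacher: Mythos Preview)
Your proof is correct and follows essentially the same route as the paper's: feasibility of the flow-support $x$ via the cut argument, and construction of a feasible multiflow from an (L)-solution by applying Lov\'asz--Cherkassky (Theorem~\ref{thm:LC}) to the network with capacity $x$. The only expository difference is that the paper invokes Theorem~\ref{thm:LC} directly for the per-terminal equality $f(s)=\kappa_s(x)$, whereas you spell this out via the averaging identity $\sum_s f(s)=2v_f=\sum_s \kappa_s(x)$ together with $f(s)\le\kappa_s(x)$; this is the same argument, just made explicit.
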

\begin{proof}
A feasible multiflow $f$ contains an $(s, S \setminus \{s\})$-flow 
with the total flow-value at least $r(s)$. 
The capacity of any $(s, S \setminus \{s\})$-cut under capacity $x$ 
is at least $r(s)$ for $s \in S$.
This means that $x$ is feasible to (L). Hence the optimal value of (L) is at least that of (N).
Conversely, for a feasible solution $x$ of (L), 
consider a maximum free multiflow $f$ under capacity $x$.
By Theorem~\ref{thm:LC}, $f(s)$ is equal 
to the minimum capacity of an $(s, S \setminus \{s\})$-cut under capacity $x$, 
which is at least $r(s)$.
Thus $f$ is feasible, and $a_f \leq \sum_{e \in E} a(e) x(e)$.
This means that the optimal value of (N) is at least that of (L).
\end{proof}
Notice that a feasible multiflow exists (or (L) is feasible) if and only if
\begin{equation}\label{eqn:feasible}
c(\delta X) \geq r(s) \quad (s \in S, X \in {\cal C}_s).
\end{equation}
We will assume this condition in the sequel. Fukunaga~\cite{Fukunaga14} proved 
the half-integrality for (N) and (L).
\begin{Thm}[\cite{Fukunaga14}]\label{thm:fuku}
There exist half-integral optimal solutions in (N) and in (L). They can be 
obtained in strongly polynomial time.
\end{Thm}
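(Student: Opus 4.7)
\medskip

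\noindent\textbf{Proof plan.} The plan is to establish half-integrality via a structural analysis of extreme points of (L), and to obtain strongly polynomial solvability by applying the ellipsoid method with a combinatorial separation oracle; the half-integral solution of (N) is then extracted from the half-integral solution of (L) through Lemma~\ref{lem:NvsL}.

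First I would focus on the LP (L) and show that every vertex of its feasible polytope is half-integral. Let $x^*$ be an extreme point and let $\mathcal{F}$ be the family of tight cut constraints $(s,X)$ with $s\in S$, $X\in\mathcal{C}_s$, and $x^*(\delta X)=r(s)$. For a fixed $s$, the cut function $X\mapsto x^*(\delta X)$ on subsets $X\in\mathcal{C}_s$ is submodular, and both $\mathcal{C}_s$ and the demand value $r(s)$ on its right-hand side are preserved under intersection and (when the union avoids other terminals) union. Consequently, by a standard uncrossing argument one can replace $\mathcal{F}$ by a cross-free (laminar on each color class) subfamily whose characteristic vectors still span the same linear subspace. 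The incidence matrix of a laminar family of cuts augmented with the box constraints $0\le x(e)\le c(e)$ has the property that all square nonsingular submatrices have determinant in $\{\pm 1,\pm 2\}$ with a right-hand side in $\frac{1}{2}\mathbb{Z}$ after an appropriate signing, so $x^*$ must be half-integral.

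Next I would handle the strongly polynomial part. For any fractional $x$, the separation problem for (L) reduces to solving, for each $s\in S$, a minimum $(s, S\setminus\{s\})$-cut computation in $(V,E)$ with capacities $x$; if every such minimum cut has value at least $r(s)$, then $x$ is feasible, and otherwise the minimum cut exhibits a violated constraint. This gives a polynomial (indeed, strongly polynomial) separation oracle using $|S|$ max-flow computations. Invoking the ellipsoid method together with the standard rounding to a vertex of the optimal face (cf.~Gr\"otschel--Lov\'asz--Schrijver), one obtains a strongly polynomial algorithm producing an optimal extreme point $x^*$, which is half-integral by the structural result above.

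Finally, to pass from (L) to (N), note that by Lemma~\ref{lem:NvsL} every optimal $x^*$ of (L) is the flow-support of an optimal multiflow of (N). By Theorem~\ref{thm:LC}, a maximum free multiflow in the network with half-integral capacities $x^*$ exists and can be chosen half-integral; its value at each terminal equals the minimum $(s,S\setminus\{s\})$-cut capacity under $x^*$, which is at least $r(s)$ by feasibility of $x^*$ in (L), so the multiflow is feasible to (N) and optimal by Lemma~\ref{lem:NvsL}. The main obstacle will be the uncrossing step: unlike the single-commodity $s$--$t$ cut setting, tight cuts from different terminals $s,s'\in S$ may cross in ways that do not immediately yield new cuts in $\mathcal{C}_s$ or $\mathcal{C}_{s'}$, so one must argue carefully that within each family $\{X\in\mathcal{C}_s\}$ the uncrossing closes, and that crossings across different $s$ can be decoupled by restricting to a basis of the tight subspace.
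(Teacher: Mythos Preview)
This theorem is cited from \cite{Fukunaga14} and the present paper does not give its own detailed proof; it only remarks (immediately after the statement) that the strongly polynomial solvability follows from the ellipsoid method with a minimum-cut separation oracle, which is exactly your step~2. For the half-integrality, the paper later supplies an independent \emph{algorithmic} proof (Section~\ref{subsub:simple}) via the double covering network: starting from $p=0$ it runs a descent that keeps $p$ half-integral and terminates with an optimal potential and a half-integral optimal multiflow. This is an entirely different route from your extreme-point/uncrossing analysis: the paper works on the dual side and never analyses the vertex structure of (L), whereas you work on the primal polytope directly. Your route, if completed, would give a static structural statement about the polytope; the paper's route gives a constructive proof but is tied to the specific problem.

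That said, your sketch has a real gap at the heart of step~1. You assert that after uncrossing, ``the incidence matrix of a laminar family of cuts augmented with the box constraints has the property that all square nonsingular submatrices have determinant in $\{\pm 1,\pm 2\}$,'' and you invoke this to conclude half-integrality. This is precisely the nontrivial content of the theorem and cannot be waved through. Within a single $\mathcal{C}_s$ the uncrossed tight cuts form a chain and the corresponding rows are network-matrix rows (so totally unimodular), but once you combine chains from \emph{several} terminals, an edge may cross a consecutive block of cuts in one chain and another consecutive block in a second chain. The resulting matrix is a ``two-interval'' matrix, and such matrices are \emph{not} in general half-TU; you need an additional argument specific to this situation (for instance, that the two chains touched by any edge are determined by its endpoints and interact in a compatible way) to bound subdeterminants by~$2$. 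Your own final paragraph flags the cross-terminal uncrossing as ``the main obstacle'' yet leaves it unresolved; in fact the uncrossing itself goes through via posi-modularity ($X\setminus Y\in\mathcal{C}_s$, $Y\setminus X\in\mathcal{C}_{s'}$ are again tight), so the true obstacle is not the uncrossing but the subsequent determinant bound, which you have not justified. Also, the remark that the right-hand side lies in $\tfrac12\ZZ$ ``after an appropriate signing'' is off: $r(s)$ and $c(e)$ are integers, and half-integrality should come from the subdeterminant bound alone via Cramer's rule.
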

The polynomial time solvability depends on a generic LP-solver for solving (L); 
observe that the separation of the feasible region of (L) is done by minimum cut computations, 
and thus (L) is solved by the ellipsoid method.
Also (L) has an extended formulation of polynomial size
\footnote{Instead of exponentially many conditions 
	$x(\delta X) \geq r(s)$, consider 
	a single commodity $(s, S\setminus \{s\})$-flow $\varphi_s$ under capacity $x$ 
	with total flow value at least $r(s)$.
	},  
and thus is solved by the interior point method.

As an application of results in the previous section,  
we present a purely combinatorial polynomial time scaling algorithm 
to obtain half-integral optimal solutions in (N), in (L), and in (MCMF).
The main result in this section is as follows:
\begin{Thm}\label{thm:multiflow_main}
There exists an $O(n \log (n AC)\, {\rm MF}(kn, km))$ time algorithm 
to solve (N), (L), and (MCMF), 
where $n$ is the number of nodes, $m$ is the number of edges,
$k$ is the number of terminals, $A$ is the maximum of edge-costs, 
$C$ is the total sum of edge-capacities.
\end{Thm}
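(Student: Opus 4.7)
The plan is to reduce the multiflow problem to the minimization of a 2-separable convex function on the product $T^n$ of a subdivided star, and then solve that minimization by a domain (proximity) scaling scheme whose inner loop is the steepest descent algorithm implemented by maximum flow computations, exactly as developed in Section~\ref{sec:L-extendable}.

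First I would formulate a combinatorial LP-dual of (L). Introducing a potential $x_i$ at each node $i\in V$, with $x_s$ constrained to lie on the $s$-th ray of a subdivided star $T$ whose rays are indexed by $S$ and whose length matches the scale of capacities, the natural dual reads as the minimization of a function of the form
\[
\omega(x) \;=\; \sum_{s\in S} r(s)\, f_s(x_s) \;+\; \sum_{ij\in E} \bigl( c(ij)\,g_{ij}(d(x_i,x_j)) + a(ij)\, d(x_i,x_j) \bigr)
\]
(up to appropriate signs and boundary terms), where the univariate functions $f_s$ force $x_s$ to the leaf associated with $s$ and the $g_{ij}$'s are nondecreasing convex. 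By Theorem~\ref{thm:2separable} each summand is locally a nonnegative combination of basic $\pmb k$-submodular functions, so $\omega$ is $2$-separable convex and its L-convex relaxation $\bar\omega$ on $T^n$ can be minimized over ${\cal I}(x)\cup {\cal F}(x)$ by a single max-flow on a network of $O(kn)$ nodes and $O(km)$ edges, courtesy of Theorem~\ref{thm:IWY14}.

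Next I would apply domain scaling. I replace $T$ by a coarsened subdivided star $T^{(\ell)}$ obtained by contracting each $2^\ell$ consecutive edges on every ray, yielding a hierarchy of problems with at most $\lceil \log_2 (nAC)\rceil + O(1)$ levels; the cost-edges and capacity-edges are scaled accordingly so that the coarsest problem is trivial. Passing from level $\ell$ to level $\ell-1$, I take the refinement $\hat x$ of the optimal $x^{(\ell)}$ (mapping each vertex of $T^{(\ell)}$ to its corresponding vertex of $T^{(\ell-1)}$, automatically in the black color class $B^n$), and run the steepest descent algorithm on the refined $\bar\omega^{(\ell-1)}$ starting from $\hat x$. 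The proximity theorem (Theorem~\ref{thm:proximity}) guarantees a global minimizer within $l_\infty$-distance $n$ (or $2n$) of $\hat x$, and the $l_\infty$-geodesic property (Theorem~\ref{thm:bound}) then bounds the number of steepest descent iterations in this phase by $O(n)$. Since each iteration costs one min-cut on a network of size $O(kn) + O(km)$, the total time over all $O(\log(nAC))$ phases is $O(n\log(nAC)\cdot{\rm MF}(kn,km))$.

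Finally I would recover a primal half-integral optimum. From the optimal dual $x^*$ produced at the finest level, the complementary slackness conditions pin down the support of the primal edge-vector $x(e)$ solving (L), and by Lemma~\ref{lem:NvsL} any maximum free multiflow under this edge-capacity is optimal to (N); its half-integrality follows from Theorems~\ref{thm:LC} and~\ref{thm:fuku}, and a single max-flow decomposition (plus the reduction (MCMF)$\Rightarrow$(N) described before Lemma~\ref{lem:NvsL}) produces the multiflow itself within the same time bound. The main obstacle I expect is setting up the dual correctly so that (i) the effective domain is indeed $T^n$ for a subdivided star of the right length, (ii) each term in $\omega$ fits the template of Theorem~\ref{thm:2separable} (in particular, the terms arising from the demand $r(s)$ and from the capacities $c(ij)$ must turn into the functions $h_{T;z}$ and $h_T$ rather than something that breaks midpoint-evenness), and (iii) the persistency property (Theorem~\ref{thm:persistency}) can be invoked to round a minimizer of the L-convex relaxation at the final scale to a genuine $B^n$-minimizer without losing optimality; all three points essentially amount to verifying that the scaling interpretation is consistent with the alternating bipartition of $T^*$.
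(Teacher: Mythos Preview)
Your high-level plan matches the paper's approach: formulate the dual of (N) as a 2-separable convex minimization on the $n$-fold product of a subdivided star, run a domain scaling scheme, use Theorem~\ref{thm:proximity} and Theorem~\ref{thm:bound} to bound the iterations per phase by $O(n)$, and implement each steepest descent step as a max-flow via Theorem~\ref{thm:IWY14}. That skeleton is correct.

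Two concrete issues remain. First, your dual objective is not quite right. The term $a(ij)\,d(x_i,x_j)$ does not belong there; in the paper the edge cost $a(ij)$ enters only as a threshold inside $(D(p_i,p_j)-a(ij))^+$, multiplied by the capacity $c(ij)$, and the $r(s)$ term is $-r(s)D(0,p_s)$ together with an indicator forcing $p_s\in{\cal T}_s$. Getting this form right matters because the 2-separable convex template of Theorem~\ref{thm:2separable} has to apply to every summand, and because you must verify that at the finest scale the relaxation $\bar\omega_{-1}$ equals $\omega_{-1}$ (this uses the integrality of $a$; see the paper's Lemma~\ref{lem:omega}(3)). Your own ``obstacles'' paragraph already flags this; it is not merely cosmetic.

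Second, and more substantively, your primal recovery step has a gap. Complementary slackness (the conditions in Lemma~\ref{lem:optimality}) does \emph{not} pin down the flow-support $x(e)$: on edges with $D(p_i,p_j)=a(ij)$ the value $f(ij)$ can be anything in $[0,c(ij)]$, and those edges are exactly where the work lies. The paper handles this by building the \emph{double covering network} ${\cal D}_p$ from the optimal potential $p$ and finding an integral circulation in it; the half-integral optimal multiflow is then read off from the cycle decomposition (Proposition~\ref{prop:f_phi}). This step also requires the edge-cost positivity assumption (CP), enforced by the perturbation (\ref{eqn:a'}), which is why the $C$ appears inside the $\log$. Your proposal of invoking Lemma~\ref{lem:NvsL} after ``pinning down'' $x$ would be circular without an independent way to compute $x$. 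A smaller point in the same vein: the per-phase distance bound in the paper (Lemma~3.12) chains persistency (Theorem~\ref{thm:persistency}) with \emph{two} applications of proximity, because you must pass from the relaxation $\bar\omega_{\sigma+1}$ through $\omega_{\sigma+2}$ and $\omega_{\sigma+1}$ down to $\bar\omega_\sigma$; a single proximity step as you describe it does not directly compare $p^{\sigma+1}$ with $\opt(\bar\omega_\sigma)$.
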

To the best of our knowledge, 
our algorithm is the first combinatorial polynomial time algorithm for (N) and (L), 
and the first combinatorial algorithm for (MCMF) with an explicit polynomial running time.

In Section~\ref{subsec:multiflow_duality},
we  formulate a dual of (N) as a convex location problem 
on a (topological) tree ${\cal T}$ that is the union of the coordinate axises in $\RR^S_+$, 
and establish the half-integrality (Proposition~\ref{prop:min-max}).
Then we  give an optimality criterion (Lemma~\ref{lem:optimality}) for (N).
In Section~\ref{subsec:double_covering}, 
we explain an algorithm to construct an optimal multiflow in (N)
from a given optimal dual solution. This algorithm
is a slight modification of the algorithm of \cite{Kar94} devised for (M). 
In Section~\ref{subsec:proximity_scaling}, 
we present an algorithm to solving a dual of (N),  providing the proof of
Theorem~\ref{thm:multiflow_main}.
By the half-integrality, 
the dual of (N) is the minimization of a 2-separable L-convex function on 
a tree obtained by joining half-integral points in ${\cal T}$.
We will design a proximity scaling algorithm 
by considering a $2$-separable L-convex function minimization
over the tree of $2^{\sigma}$-integral points in each scaling phase $\sigma$.
The time complexity will be estimated by the results of the previous section.
We also sketch how to adapt our algorithm to solve~(M).
In Section~\ref{subsec:additional}, we give additional results and remarks.
In particular, we explain that our combinatorial  
algorithm gives a practical implementation of Fukunaga's 
$4/3$-approximation algorithm~\cite{Fukunaga14} 
for capacitated terminal backup problem.
We also give a further simple and instructive but pseudo-polynomial 
time algorithm to solve (N).

\subsection{Duality}\label{subsec:multiflow_duality}
We first formulate a dual of (N) as a continuous 
location problem on a tree (in topological sense).
Let $\RR^S$ denote the set of functions on $S$.
For each terminal $s \in S$, let $e_s$ denote the function 
defined by $e_s(s) := 1$ and $e_{s}(t) := 0$ for $t \neq s$.
Namely $e_s$ is the $s$th unit vector of $\RR^S$.
Let ${\cal T}_s := \RR_+ e_s$, 
and let ${\cal T} := \bigcup_{s \in S} {\cal T}_s \subseteq \RR^S$.
The metric $D$ on ${\cal T}$ is defined by
\begin{equation*}
D(p,q) := \left\{
\begin{array}{ll}
|p(s) - q(s)| & {\rm if}\  \mbox{$p,q \in {\cal T}_s$ for $s \in S$}, \\
|p(s)| + |q(t)| & {\rm if}\ \mbox{$p \in {\cal T}_s, q \in {\cal T}_t$ for distinct $s,t \in S$}, 
\end{array} \right. \quad (p,q \in {\cal T}).
\end{equation*}
The space ${\cal T}$ is 
isometric to a {\em star} obtained by gluing half-lines $\RR_+$ along the origin.
Notice that $D$ is not equal to an induced metric on $\RR^S$.

Let $V = \{1,2,\ldots,n\}$.
Consider the following continuous location problem on ${\cal T}$:
\begin{eqnarray*}
\mbox{{\bf (D)}: Max.} && \sum_{s \in S} r(s) D(0, p_s) - \sum_{ij \in E} c(ij) (D(p_i, p_j) - a(ij))^+ \\
\mbox{s.t.} && p = (p_1,p_2,\ldots, p_n) \in 
{\cal T} \times {\cal T} \times \cdots \times {\cal T}, \\
                  && p_s \in {\cal T}_s \quad (s \in S), 
\end{eqnarray*}
where  $(z)^+$ denotes $\max (0, z)$.
A feasible solution $p$ of (D) is called a {\em potential}, and called {\em half-integral}
if each $p_i$ is a half-integral vector in $\RR^S$.
A potential $p$ is called {\em proper} if $D(0, p_i) \leq D(0,p_s)$ 
for each $s \in S$ and $i \in V$ with $p_i \in {\cal T}_s$.
\begin{Prop}\label{prop:min-max}
The minimum value of (N) is equal to the maximum value of (D).
Moreover there exists a proper half-integral optimal potential in (D).
\end{Prop}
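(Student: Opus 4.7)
My plan has three parts: weak duality, strong duality via LP duality on $(L)$, and half-integrality with properness.

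\emph{Weak duality.} For a feasible multiflow $f = ({\cal P}, \lambda)$ and any potential $p$, I would path-decompose $f$. For each path $P \in {\cal P}$ from terminal $s$ to terminal $t$, the conditions $p_s \in {\cal T}_s$ and $p_t \in {\cal T}_t$ (distinct arms) give $D(p_s, p_t) = D(0, p_s) + D(0, p_t)$, and the metric triangle inequality yields $D(p_s, p_t) \leq \sum_{ij \in P} D(p_i, p_j)$. Weighting by $\lambda(P)$ and summing, while using $f(s) \geq r(s)$, $f(ij) \leq c(ij)$, and the pointwise bound $D(p_i, p_j) \leq a(ij) + (D(p_i, p_j) - a(ij))^+$, I obtain
\begin{equation*}
\sum_{s \in S} r(s) D(0, p_s) - \sum_{ij \in E} c(ij) (D(p_i, p_j) - a(ij))^+ \;\leq\; a_f,
\end{equation*}
so $\max(D) \leq \min(N)$.

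\emph{Strong duality.} For the reverse inequality, I would identify $\max(D)$ with the LP dual of $(L)$: after eliminating $z(e) = (\sum_{s, X : e \in \delta X} y_{s, X} - a(e))^+$, the LP dual is
\begin{equation*}
\textstyle \max \Bigl\{\sum_{s, X} r(s) y_{s,X} - \sum_e c(e) \bigl(\sum_{s, X : e \in \delta X} y_{s,X} - a(e)\bigr)^+ : y \geq 0 \Bigr\},
\end{equation*}
where $s$ ranges over $S$ and $X$ over ${\cal C}_s$. Given a potential $p$, I set $y_{s, X}$ equal to the Lebesgue measure of $\{ t \in (0, D(0, p_s)] : X_s^t = X \}$, where $X_s^t := \{i \in V : p_i \in {\cal T}_s,\, p_i(s) \geq t\}$; the constraints $p_s \in {\cal T}_s$ and $p_i \in {\cal T}$ guarantee $X_s^t \in {\cal C}_s$, and a direct computation gives $\sum_{s, X} r(s) y_{s, X} = \sum_s r(s) D(0, p_s)$ and $\sum_{s, X : e \in \delta X} y_{s, X} = D(p_i, p_j)$, so the LP-dual value at this $y$ equals the $(D)$-objective at $p$. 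Conversely, starting from any LP-dual optimum, I would apply the classical $(X, X') \mapsto (X \cap X', X \cup X')$ uncrossing to each family $\{X : y_{s, X} > 0\}$ (legal because ${\cal C}_s$ is closed under union and intersection, and non-decreasing in the objective by submodularity of $X \mapsto \mathbf{1}[e \in \delta X]$) so that $y_{s, \cdot}$ becomes chain-supported; this gives $\sum_{X : e \in \delta X} y_{s, X} = |p_i(s) - p_j(s)|$ for $p_i(s) := \sum_{X : i \in X} y_{s, X}$. A further local exchange at each non-terminal $i$ whose induced vector $p_i \in \RR_+^S$ has two positive coordinates consolidates $p_i$ onto a single arm of ${\cal T}$ without decreasing the objective. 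Combined with Lemma~\ref{lem:NvsL} and LP duality on $(L)$, this yields $\min(N) = \min(L) = \max(\text{LP dual}) = \max(D)$.

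\emph{Half-integrality and properness.} Theorem~\ref{thm:fuku} supplies a half-integral optimum of $(L)$; complementary slackness applied to the chain parametrization above then produces a half-integral optimal $y$, and hence a half-integral potential $p$ attaining $\max(D)$. Properness is enforced by a final truncation: if $D(0, p_i) > D(0, p_s)$ with $p_i \in {\cal T}_s$, replace $p_i$ by the vertex on ${\cal T}_s$ at distance $D(0, p_s)$; a short case analysis on each edge shows that $D(p_i, p_j)$ never increases (so the penalty term is non-increasing), the linear term is untouched, and half-integrality is preserved. The main obstacle is the folding step in strong duality---converting a chain-supported but $\ell^1$-valued LP-dual optimum into a tree-valued potential---which requires a careful exchange argument exploiting the piecewise-linear structure of the penalty $(\cdot)^+$.
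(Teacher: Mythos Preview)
Your overall plan---reduce to the LP dual of $(L)$, uncross to obtain a structured dual optimum, translate to a potential, then invoke Fukunaga's half-integrality---is the same route the paper takes. But the uncrossing you describe is incomplete, and the missing piece is exactly the step you yourself flag as ``the main obstacle.'' You uncross only \emph{within} each family ${\cal C}_s$ via $(X,X') \mapsto (X\cap X', X\cup X')$, which makes each $y_{s,\cdot}$ chain-supported and yields a vector $p_i \in \RR_+^S$; nothing, however, forces $p_i$ to lie on a single arm of ${\cal T}$. Your proposed ``local exchange'' to fold $p_i$ onto one arm is not spelled out, and it is not straightforward: moving $p_i$ along a single coordinate changes each edge term $(\|p_i-p_j\|_1 - a(ij))^+$ in an edge-dependent, non-monotone way, so a naive exchange can decrease the objective.

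The paper closes this gap by performing a stronger uncrossing \emph{across} families as well: for $X \in {\cal C}_s$ and $Y \in {\cal C}_{s'}$ with $s \neq s'$ and $X \cap Y \neq \emptyset$, replace $(X,Y)$ by $(X\setminus Y,\, Y\setminus X)$. Since $s' \notin X$ and $s \notin Y$, the new sets remain in ${\cal C}_s$ and ${\cal C}_{s'}$ respectively, and posi-modularity of the undirected cut function gives $\mathbf 1[e \in \delta X] + \mathbf 1[e \in \delta Y] \geq \mathbf 1[e \in \delta(X\setminus Y)] + \mathbf 1[e \in \delta(Y\setminus X)]$ for every edge $e$, so each penalty $(\cdot - a(e))^+$ can only decrease and the dual objective can only increase. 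After this step the support is laminar (chains within each ${\cal C}_s$, disjoint across distinct $s$), so every non-terminal node lies in sets from at most one family and the induced $p_i$ already belongs to ${\cal T}$---no separate folding is needed.

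One smaller point: complementary slackness by itself does not produce a half-integral dual optimum from a half-integral primal optimum. The paper instead argues via total dual half-integrality (Theorem~\ref{thm:fuku} gives a half-integral primal optimum for \emph{every} integral cost $a$), and ultimately defers to the algorithmic proof in Section~\ref{subsub:simple}, which constructs a half-integral optimal potential directly.
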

We will give an algorithmic proof  in Section~\ref{subsub:simple}, 
and here give a sketch of the proof.
\begin{proof}[Sketch of proof]
For a (half-integral) potential $p$ and a terminal $s \in S$, 
let $p'$ be a (half-integral) potential defined 
by $p'_i := p_s$ if $p_i \in {\cal T}_s$ and $D(0, p_i) > D(0, p_s)$, 
and $p'_i := p_i$ otherwise.
Then the objective value of (D) does not decrease.
Therefore there always exists a proper optimal potential in (D).

Let ${\cal C} := \bigcup_{s \in S} {\cal C}_s$.
The LP-dual of (L) is equivalent to:
\begin{eqnarray}\label{eqn:LP-dual'}
\mbox{Max.} &&  \sum_{s \in S} r(s)\sum_{X \in {\cal C}_s} \pi (X) 
- \sum_{e \in E} c(e) \left(\sum_{X \in {\cal C}: e \in \delta X} \pi (X)  - a(e) \right)^+\\
\mbox{s.t.} && \pi: {\cal C}\to \RR_+. \nonumber
\end{eqnarray}
By the standard uncrossing argument, one can show that there always exists an optimal solution $\pi$ 
such that
for $X,Y \in {\rm supp}\, \pi := \{ X \in {\cal C} \mid \pi(X) > 0 \}$, 
it holds $X \subseteq Y$ or $X \supseteq Y$ if $X,Y \in {\cal C}_{s}$ for $s \in S$,
and $X \cap Y = \emptyset$ if $X \in {\cal C}_s$ 
and $Y \in {\cal C}_{s'}$ for distinct $s, s' \in S$. Such a solution is called {\em laminar}.

Thus it suffices to show that 
for a proper potential $p$ there is $\pi: {\cal C} \to \RR_+$ satisfying
\begin{eqnarray}\label{eqn:suffice}
\sum_{X \in {\cal C}: e \in \delta X} \pi(X)   & = & D(p_i, p_j) \quad (ij \in E), \\
\sum_{X \in {\cal C}_s} \pi(X) & = & D(0, p_s) \quad (s \in S), \nonumber
\end{eqnarray}
and that for a laminar solution $\pi: {\cal C} \to \RR_+$,
there is a proper potential $p$ satisfying (\ref{eqn:suffice}).

Let $p = (p_1,p_2,\ldots,p_n)$ be a proper potential.
For $s \in S$ with $p_s \neq 0$, suppose that
$\{p_1,p_2,\ldots,p_n\} \cap ({\cal T}_s \setminus \{0\}) = \{q_1,q_2,\ldots, q_{k_s} = p_s\}$ 
with $0 < D(0, q_1) < D(0, q_{2}) < \cdots < D(0, q_{k_s})$. 
For $j=1,2,\ldots,k_s$, define $X^s_j$ and $\pi^s_j$ by 
\[
X^s_j  :=  \{ i \in V \mid p_i \in \{q_j, q_{j+1},\ldots,q_{k_s}\}\}, \ 
\pi^s_j  := D(q_{j-1}, q_{j}), 
\]
where we let $q_0 := 0$. 
Then $X^s_j \in {\cal C}_s$.
Define $\pi: {\cal C} \to \RR_+$ by $\pi(X^s_j) := \pi^s_j$ and 
$\pi(X) := 0$ for other $X$.
Then (\ref{eqn:suffice}) holds. 

Conversely, let $\pi$ be a laminar solution of (\ref{eqn:LP-dual'}).
Then we can assume that 
${\rm supp}\, \pi \cap {\cal C}_s = \{ X^s_1,X^s_2,\ldots X^s_{k_s}\}$ 
with $X^s_1 \supset X^s_2 \supset \cdots \supset X^s_{k_s} \ni s$.
For each node $i$, if $i$ does not belong to any member of ${\rm supp}\, \pi$,
then define $p_i := 0$.
Otherwise there uniquely exist $s \in S$ and $j \in \{1,2,\ldots,k_s\}$ such that
$i \in X_j^s$ and $i \not \in X_{j+1}^s$, where $X_{k_s+1}^s := \emptyset$.
Define $p_i :=  (\sum_{l= 1}^j \pi(X_l^s)) e_s \in {\cal T}_s$.
Then we obtain a proper potential 
$p = (p_1,p_2,\ldots,p_n)$ of (D) satisfying (\ref{eqn:suffice}).

By Theorem~\ref{thm:fuku}, for every cost vector $a$ (not necessarily nonnegative)
there exists a half-integral optimal solution in (L).
By the total dual (half-)integrality, 
there exists a half-integral laminar optimal solution in (\ref{eqn:LP-dual'}), and
there exists a half-integral optimal potential in (D). 
\end{proof}
We next provide an optimality criterion for (N) and (D).
For a potential $p$, 
an $(s,t)$-path $P = (s = i_0,i_1,\ldots,i_l = t)$ is said to be {\em $p$-geodesic} if 
\[
\sum_{k = 0}^{l-1} D(p_{i_k}, p_{i_{k+1}}) = D(p_s, p_t).
\]
Observe from the triangle inequality that $(\geq)$ always holds.
\begin{Lem}\label{lem:optimality}
A feasible flow $f = ({\cal P}, \lambda)$ 
and a potential $p$ are both optimal if and only if they satisfy the following conditions:
\begin{itemize}
\item[{\rm (1)}] For each edge $ij$, if $D(p_i, p_j) > a(ij)$, then $f(ij) = c(ij)$.
\item[{\rm (2)}] For each edge $ij$, if $D(p_i, p_j) < a(ij)$, then $f(ij) = 0$.
\item[{\rm (3)}] For each path $P$ in ${\cal P}$, if $\lambda(P) > 0$, 
then $P$ is $p$-geodesic.
\item[{\rm (4)}] For each terminal $s$, if $D(0, p_s) > 0$, then $f(s) = r(s)$.
\end{itemize}
\end{Lem}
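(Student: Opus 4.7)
The plan is to establish weak LP duality $a_f \geq \mathrm{obj}_{\mathrm{(D)}}(p)$ as a single chain of inequalities, and then to read off the four listed conditions as precisely the slackness conditions that turn every inequality in that chain into an equality. Combined with strong duality (Proposition~\ref{prop:min-max}), this will give both directions of the lemma simultaneously.

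First I would bound the \emph{potential-length} of the multiflow in two ways. Writing $f = ({\cal P},\lambda)$, for each $P \in {\cal P}$ joining $s \neq t$ the triangle inequality on ${\cal T}$ gives $\sum_{ij \in P} D(p_i, p_j) \geq D(p_s, p_t)$, with equality exactly when $P$ is $p$-geodesic. Since $p_s \in {\cal T}_s$ and $p_t \in {\cal T}_t$ lie on different branches, $D(p_s, p_t) = D(0, p_s) + D(0, p_t)$. Summing over $P$ weighted by $\lambda(P)$ and using $f(s) = \sum_{t \neq s} f(s,t)$, one obtains
\[
\sum_{ij \in E} f(ij)\, D(p_i, p_j) \;=\; \sum_{P \in {\cal P}} \lambda(P) \sum_{ij \in P} D(p_i,p_j) \;\geq\; \sum_{s \in S} f(s)\, D(0, p_s) \;\geq\; \sum_{s \in S} r(s)\, D(0, p_s),
\]
where the last step uses feasibility $f(s) \geq r(s)$ and $D(0,p_s) \geq 0$. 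Equality throughout holds exactly when every $P$ with $\lambda(P)>0$ is $p$-geodesic (condition (3)) and when $f(s) = r(s)$ whenever $D(0,p_s) > 0$ (condition (4)).

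Next I would rewrite the duality gap edgewise. From the displayed inequality,
\[
a_f - \mathrm{obj}_{\mathrm{(D)}}(p) \;\geq\; \sum_{ij \in E} \Bigl[ (a(ij) - D(p_i,p_j))\, f(ij) + c(ij)\,(D(p_i,p_j) - a(ij))^+ \Bigr].
\]
For each edge $ij$, if $D(p_i, p_j) \leq a(ij)$ the $(\cdot)^+$ term vanishes and the summand equals $(a(ij) - D(p_i,p_j))\, f(ij) \geq 0$, vanishing iff $D(p_i,p_j) = a(ij)$ or $f(ij) = 0$; if $D(p_i,p_j) > a(ij)$ the summand rearranges to $(D(p_i, p_j) - a(ij))(c(ij) - f(ij)) \geq 0$, vanishing iff $f(ij) = c(ij)$. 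These are exactly conditions (1) and (2).

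Finally, Proposition~\ref{prop:min-max} gives strong duality: the minimum of (N) equals the maximum of (D). Hence a feasible pair $(f,p)$ is optimal on both sides if and only if $a_f = \mathrm{obj}_{\mathrm{(D)}}(p)$, which by the above two paragraphs is equivalent to the simultaneous validity of (1)--(4). The main thing to watch is the step $D(p_s,p_t) = D(0,p_s) + D(0,p_t)$ for $s \neq t$, which is the only place the tree structure of ${\cal T}$ (rather than a general metric) is used; everything else is a routine inspection of the edgewise slackness.
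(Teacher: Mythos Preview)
Your proposal is correct and follows essentially the same approach as the paper: both arguments decompose the duality gap $a_f - \mathrm{obj}_{\mathrm{(D)}}(p)$ into the same four nonnegative summands (the two edgewise slackness terms, the path-geodesic defect, and the terminal-demand slack), invoke $D(p_s,p_t)=D(0,p_s)+D(0,p_t)$ for distinct terminals, and conclude via Proposition~\ref{prop:min-max}. The only difference is presentational---you treat the path/terminal terms first and the edge terms second, whereas the paper writes out the full algebraic identity in one block---but the content is identical.
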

\begin{proof}
For a path $P = (i_0,i_1,\ldots,i_l)$, 
let $D(p(P)) := \sum_{k = 0}^{l-1} D(p_{i_k}, p_{i_{k+1}})$. 
The statement follows from the previous proposition, and
\begin{eqnarray*}
&& \sum_{ij \in E} a(ij)f(ij) - \sum_{s \in S} r(s) D(0, p_s) + \sum_{ij \in E} c(ij) (D(p_i, p_j) - a(ij))^+ \\
&  & =  \sum_{ij \in E} a(ij)f(ij) + \sum_{ij \in E} c(ij) (D(p_i, p_j) - a(ij))^+ - \sum_{ij \in E} f(ij) D(p_i, p_j) \\
&& \quad + \sum_{ij \in E} f(ij) D(p_i, p_j)  - \sum_{st} f(s,t) D(p_{s}, p_{t})  + \sum_{st} f(s,t) D(p_{s}, p_{t}) -  \sum_{s \in S} r(s) D(0, p_s) \\
&& = \sum_{ij \in E} (D(p_i, p_j) - a(ij))^+ \{ c(ij) - f(ij) \} + 
\sum_{ij \in E} (a(ij) - D(p_i, p_j))^+ f(ij) \\
&& \quad + \sum_{st}\sum_{P \in {\cal P}: \scriptsize\mbox{$P$ connects $s,t$}} \lambda(P) \{ D(p(P))  - D(p_{s}, p_{t}) \}  + \sum_{s \in S} (f(s) - r(s)) D(0, p_s),
\end{eqnarray*} 
where $st$ is taken over all unordered pairs of distinct terminals,  and we use
\begin{eqnarray*}
&& \sum_{ij \in E} f(ij) D(p_i, p_j) = \sum_{ij \in E} \sum_{P \in {\cal P}:e \in P} \lambda(P) D(p_i, p_j) = \sum_{P \in {\cal P}} \lambda(P) D(p(P)), \\
&&
\sum_{st} f(s,t)D(p_{s}, p_{t})  = 
\sum_{st} f(s,t) \left\{ D(p_{s},0) + D(0, p_{t}) \right\} =  \sum_{s \in S} f(s) D(0, p_s).
\end{eqnarray*}
\end{proof}

\subsection{Double covering network}\label{subsec:double_covering}

Here we describe an algorithm to construct 
an optimal multiflow in (N) from an optimal potential $p$ in (D) 
under the condition that each edge-cost is positive: 
\begin{description}
\item[(CP)] $a(e) > 0$ for each edge $e \in E$.
\end{description}
We will see in Remark~\ref{rem:howto} that 
we can assume (CP) by a perturbation technique. 
As Karzanov~\cite{Kar94} did for (M), 
a half-integral optimal multiflow $f$ in (N) is also obtained by 
an integral circulation of a certain directed network ({\em double covering network})
${\cal D}_p$ associated with an optimal potential $p$. 
\begin{figure} 
\begin{center} 
\includegraphics[scale=0.8]{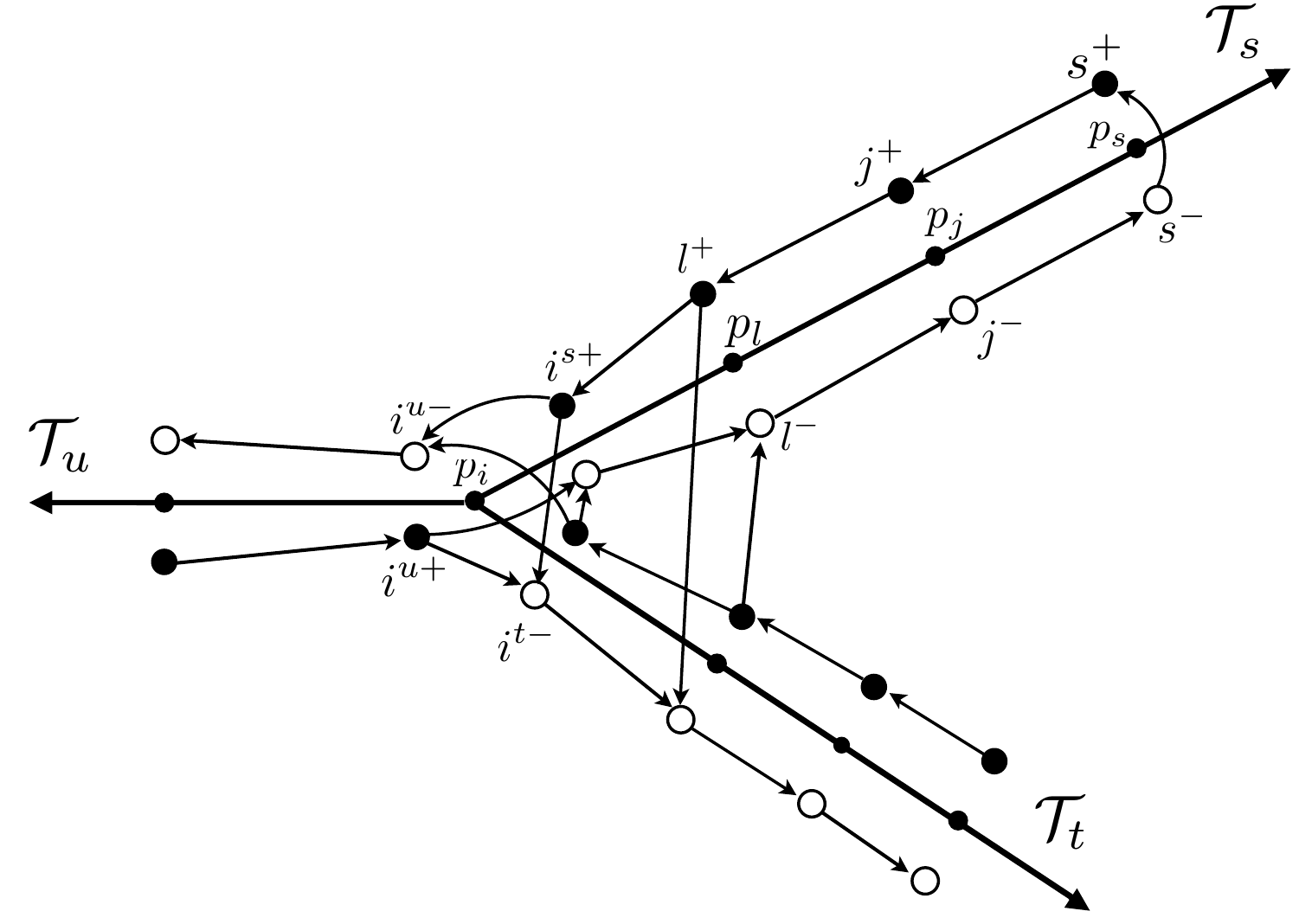}
\caption{Double covering network}  
\label{fig:double_covering}         
\end{center}
\end{figure}

Let $p$ be a (proper) potential.
Let $U_0$ denote the set of non-terminal nodes $i$ with $p_i = 0$.
For each terminal $s \in S$, 
let $U_s$ denote the set of nodes consisting 
of terminal $s$ and non-terminal nodes $i$ with $p_i \in {\cal T}_s \setminus \{0\}$.
Then $V$ is the disjoint union of $U_0$ and $U_s$ for $s \in S$.
Let $E_{=}$ denote the set of edges $ij$ with $D(p_i,p_j) = a(ij)$, 
and let $E_{>}$ denote the set of edges $ij$ with $D(p_i,p_j) > a(ij)$.

The {\em double covering network} ${\cal D}_p$ relative to $p$ is 
a directed network constructed as follows.
For each terminal $s$, consider two nodes $s^+, s^-$. 
For each non-terminal node $i$ not in $U_0$, consider two nodes $i^+, i^-$.
For each (non-terminal) node $i$ in $U_0$, 
consider $2|S|$ nodes
$i^{s+}, i^{s-}$ $(s \in S)$.
The node set of ${\cal D}_p$ consists of these nodes.
Next we define the edge set $A$ of ${\cal D}_p$.
For each edge $ij \in E_{=} \cup E_{>}$, 
define the edge set $A_{ij}$ by:
\begin{equation*}
A_{ij} := \left\{ \begin{array}{ll}
\{ j^+i^+, i^-j^-  \}  & {\rm if}\ i,j \in U_s, D(0,p_i) < D(0,p_j), \\
\{j^+i^{s+}, i^{s-}j^-\} & {\rm if}\ i \in U_0, j  \in U_s, \\
\{ i^+j^-, j^+i^- \}  & {\rm if}\ i \in U_s, j \in U_t, s \neq t.
\end{array}
\right.
\end{equation*}
Notice that for $ij \in E_{=} \cup E_{>}$, 
potentials $p_i$ and $p_j$ are different points in ${\cal T}$ since $a(ij)$ is positive.  
The upper capacity of the two edges in $A_{ij}$ is defined as $c(ij)$.
The lower capacity is defined as 
$0$ if $ij \in E_{=}$ and $c(ij)$ if $ij \in E_{>}$.
For each (non-terminal) node $i$ in $U_0$, 
the edge set $B_i$ is defined as $\{ i^{s+} i^{t-} \mid s,t \in S, s \neq t\}$. 
The lower capacity and the upper capacity of these edges are defined as $0$ and $\infty$, respectively.
For terminal $s \in S$, add edge $s^- s^+$.
The  lower capacity is defined as $r(s)$, 
and the upper capacity is defined as $\infty$ if $p_s = 0$ and $r(s)$ otherwise.
The edge set of ${\cal D}_{p}$ is the (disjoint) union of all edge sets 
$A_{ij}$ $(ij \in E_{=} \cup E_{>})$, 
$B_i$ $(i \in U_0)$, $\{ s^-s^+\}$ $(s \in S)$ (as a multiset). 
As in Figure~\ref{fig:double_covering}, 
readers may imagine that ${\cal D}_p$
is embedded into ${\cal T}$ by the map $i^{\pm} \mapsto p_i$,

Consider an integral feasible circulation 
$\phi: A \to \ZZ_+$ of this network (if it exists).
Decompose $\phi$ into the sum of characteristic vectors of 
directed cycles $C_1,C_2,\ldots, C_{m'}$ with 
positive integral coefficients $q_1,q_2,\ldots,q_{m'}$, 
where $m'$ is at most the number of edges of ${\cal D}_p$.
By construction of ${\cal D}_p$, 
any directed cycle must meet $s^-s^+$ for some terminal $s$, 
and next meets $t^-t^+$ for other terminal $t$ after meeting $s^-s^+$. 
Delete all terminal edges $s^-s^+$ from each $C_l$, 
and obtain directed paths $P_{l}^{1},P_{l}^{2},\ldots, P_l^{n_l}$ $(n_l \leq |S|)$.
Then each $P_l^j$ is a path from $s^+$ to $t^-$ for distinct $s,t \in S$.
Let $\bar P_l^{r}$ be the $S$-path in the original network 
${\cal N}$ obtained from $P_l^{r}$ by replacing $i^{\pm}$ or $i^{s \pm}$ by $i$ 
(and removing $i^{s+} i^{t -}$).
Let ${\cal P}$ be the union of $S$-paths $\bar P_l^{r}$ 
over $l =1,2,\ldots,m', r = 1,2,\ldots, n_l$.
Let $\lambda(\bar P_l^{r}) := q^r_l/2$.
Then $f_{\phi} := ({\cal P},\lambda)$ is a half-integral multiflow; 
we see in the proof of the next lemma that $f_{\phi}$ indeed
satisfies the capacity constraint.

\begin{Prop}\label{prop:f_phi}
A potential $p$ is optimal if and only if there exists 
a feasible circulation in ${\cal D}_p$.
Moreover, for any (integral) feasible circulation $\phi$, 
the (half-integral) multiflow $f_{\phi}$ is optimal to (P).
\end{Prop}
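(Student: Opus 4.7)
The plan is to use Lemma~\ref{lem:optimality} as the bridge certifying joint optimality of $(f, p)$, and prove each direction of the equivalence by producing the other object.

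For the ``moreover'' part together with the ``if'' direction, let $\phi$ be an integer feasible circulation in ${\cal D}_p$, decomposed into integer-weighted directed cycles $\sum_l q_l C_l$ as described. The key structural observation, forced by the orientations on $A_{ij}$, is that any directed ${\cal D}_p$-path between two consecutive terminal-edge endpoints must descend on the $+$-side through some $U_{s_r}$ (possibly passing through a single vertex of $U_0$ via a $B$-edge sign-flip), then ascend on the $-$-side through $U_{s_{r+1}}$. Projecting to ${\cal N}$, I will read off
\[
\sum_k D(p_{i_k}, p_{i_{k+1}}) = D(0, p_{s_r}) + D(0, p_{s_{r+1}}) = D(p_{s_r}, p_{s_{r+1}}),
\]
so each $\bar P_l^r$ is $p$-geodesic, yielding condition (3) of Lemma~\ref{lem:optimality}. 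Counting $A_{ij}$-edge occurrences per cycle gives $\phi(A_{ij}) = 2 f_\phi(ij)$: the upper cap $c(ij)$ on each edge of $A_{ij}$ forces $f_\phi(ij) \le c(ij)$ (capacity feasibility); the lower cap $c(ij)$ for $ij \in E_>$ forces $f_\phi(ij) = c(ij)$ (condition (1)); and the absence from ${\cal D}_p$ of edges outside $E_= \cup E_>$ gives $f_\phi(ij) = 0$ (condition (2)). A similar count at terminals gives $\phi(s^- s^+) = f_\phi(s)$, so the lower cap $r(s)$ yields feasibility and the tight upper cap $r(s)$ (when $p_s \ne 0$) yields condition (4). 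Lemma~\ref{lem:optimality} then delivers the optimality of both $f_\phi$ and $p$.

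For the ``only if'' direction, suppose $p$ is optimal. By Theorem~\ref{thm:fuku} and Lemma~\ref{lem:NvsL}, fix a half-integral optimal feasible multiflow $f = ({\cal P}, \lambda)$, which by Lemma~\ref{lem:optimality} satisfies (1)--(4) with $p$. I will construct a real-valued feasible circulation via a symmetric double lift: for each $P \in {\cal P}$ connecting $s$ and $t$, place weight $\lambda(P)$ on both the forward walk $W^+(P)\colon s^+ \to t^-$ and its reverse $W^-(P)\colon t^+ \to s^-$ in ${\cal D}_p$, plus weight $\lambda(P)$ on the terminal edges $s^- s^+$ and $t^- t^+$. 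Conservation at each $s^\pm$ is automatic (outflow at $s^+$ and inflow at $s^-$ both equal $f(s)$). Per-edge bookkeeping in $A_{ij}$ yields $\phi(e) = f(ij)$ for each $e \in A_{ij}$, so $f(ij) \le c(ij)$ meets the upper cap and condition (1) meets the lower cap; condition (4) meets the terminal-edge caps. Since all bounds in ${\cal D}_p$ are integer, Hoffman's circulation theorem upgrades this real feasible circulation to an integer one.

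The delicate step will be the geodesic substructure lemma underlying the double lift: every $p$-geodesic $S$-path from $s$ to $t$ (with positive flow) decomposes as a strict descent through $U_s$, at most one $U_0$-vertex, and a strict ascent through $U_t$. Strict monotonicity of $D(0, p_{i_k})$ within each phase uses (CP), which rules out consecutive vertices sharing the same $D(0, p_\cdot)$ on a single ${\cal T}_s$-branch and rules out two consecutive $U_0$-vertices; the global down-then-up shape follows because the geodesic from $p_s$ to $p_t$ in ${\cal T}$ traverses $[p_s, 0] \cup [0, p_t]$ monotonically, so any excursion into a third branch $U_{s'}$ would add $2D(0, p_{i_k}) > 0$ to the length. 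With this sublemma the walks $W^\pm(P)$ are unambiguously defined in ${\cal D}_p$, and the remaining bookkeeping is routine.
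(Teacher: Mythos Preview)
Your proposal is correct and follows essentially the same two-direction strategy as the paper: lift an optimal multiflow to a circulation for the ``only if'' direction, and verify the four conditions of Lemma~\ref{lem:optimality} for $f_\phi$ in the ``if'' direction. The bookkeeping you outline (that $\phi(e^+)+\phi(e^-)=2f_\phi(ij)$, that the lower/upper caps on $A_{ij}$ force conditions (1)--(2), and that the caps on $s^-s^+$ force feasibility and condition (4)) matches the paper exactly.

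The one notable difference is how you certify condition (3). You argue directly that the orientation of ${\cal D}_p$ forces every $s^+\to t^-$ walk into a descend--crossover--ascend shape, so the projected path has total length $D(0,p_s)+D(0,p_t)=D(p_s,p_t)$. The paper instead records only the local fact that consecutive triples $(p_{i_{k-1}},p_{i_k},p_{i_{k+1}})$ satisfy the triangle equality with $p_{i_k}\neq p_{i_{k+1}}$, and then invokes a separate tree lemma (stated immediately after the proof) to globalize this to $D(p(P))=D(p_s,p_t)$. Your route is slightly more self-contained; the paper's route isolates a reusable statement about tree metrics. Both rest on the same structural observation about edge orientations in ${\cal D}_p$.

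Two minor remarks: your appeal to Theorem~\ref{thm:fuku} for a half-integral optimum $f$ and to Hoffman's theorem for an integral circulation are not needed---the proposition only claims existence of \emph{some} feasible circulation, and the paper simply takes any optimal multiflow and builds a (possibly real-valued) $\phi_f$. Also, you should check that your ``routine bookkeeping'' covers flow conservation at the internal nodes $i^\pm$ and at the $U_0$-nodes $i^{s\pm}$ (it does, since each lifted walk enters and leaves each such node exactly once), and that it covers the degenerate case $p_s=0$ (then $s\in U_s$ still, and the first edge of the lift is the crossover $s^+\to j^-$ directly).
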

%
%We prove here the if-part and the latter statement.
%The only-if part, though it is not difficult,  
%will be proved in Section~\ref{subsub:simple} algorithmically.
\begin{proof}
(Only if part). 
Let $f = ({\cal P},\lambda)$ be an optimal multiflow. 
Then $f$ satisfies the conditions of Lemma~\ref{lem:optimality}.
Consider an path $P = (s = i_0,i_1,\ldots,i_k = t)$ in ${\cal P}$ with $\lambda(P) > 0$.
By condition~(2) with (CP), 
each edge in $P$ belongs to $E_{=} \cup E_{>}$.
By condition~(4), there are an index $l$ such that
$i_0,i_1,\ldots,i_l \in U_s$ and $i_{l+1}, i_{l+2}, \ldots,i_{k} \in U_t$ with 
$D(0,p_{i_0}) > D(0,p_{i_1}) > \cdots > D(0,p_{i_l}) > 0 < 
D(0,p_{i_{l+1}}) < D(0,p_{i_{l+2}}) < \cdots < D(0,p_{i_k})$, 
or
$i_0,i_1,\ldots,i_{l-1} \in U_s$, $i_l \in U_0$, 
and $i_{l+1}, i_{l+2}, \ldots,i_{k} \in U_t$ with 
$D(0,p_{i_0}) > D(0,p_{i_1}) > \cdots > D(0,p_{i_{l-1}}) > 0 = D(0,p_{i_l}) < 
D(0,p_{i_{l+1}}) < D(0,p_{i_{l+2}}) < \cdots < D(0,p_{i_k})$.
For the former case, the union of $A_{ij}$ over edges $ij$ in $P$ 
forms an $(s^+, t^-)$-path and an $(t^+, s^-)$-path.
For the latter case, the union of $\{ i_l^{s+}i_l^{t-}, 
i_l^{t+}i_l^{s-}\}$ and $A_{ij}$ over edges $ij$ in $P$ 
forms an $(s^+, t^-)$-path and an $(t^+, s^-)$-path.
Hence a feasible circulation $\phi_{f}$ is constructed as follows.
For each terminal $s$, define $\phi_f(s^-s^+) := f(s)$. 
For each edge $ij \in E_{=} \cup E_{>}$, 
define $\phi_f(\vec e) := f(ij)$ for $\vec e \in A_{ij}$. 
For each non-terminal node $i \in U_0$ and distinct $s,t \in S$, 
define $\phi_f(i^{s+}i^{t-})$ as the total flow-value 
of $(s,t)$-flows in $f$ using node $i$.
Then the resulting $\phi_f$ is a feasible circulation in ${\cal D}_p$.

(If part). 
We verify that $p$ and $f_{\phi}$ satisfy the conditions of Lemma~\ref{lem:optimality}.
Since there is no edge in ${\cal D}_p$ coming from $ij \in E$ with $a(ij) - D(p_i,p_j) > 0$,
the multiflow $f_\phi$ does not use edge $ij$ with $a(ij) - D(p_i,p_j) > 0$, 
and hence satisfies the condition (2).
Observe that $f_{\phi}(e) = (\phi(e^+) + \phi(e^{-}))/2 (\leq c(e))$ 
for an edge $e = ij \in E_{=} \cup E_{>}$ with $A_{ij} = \{e^+,e^-\}$. 
From this, if $e \in E_{>}$, then $f_{\phi}(e) = (\phi(e^+) + \phi(e^{-}))/2 = c(e)$, 
proving the condition (1).
For terminal $s$, $\phi(s^-s^+)$ is the sum of $q_j$ over indices $j$ 
such that the cycle $C_j$ contains $s^-s^+$, 
which is equal to the sum of $\lambda(\bar P_l^r)$ over 
$S$-paths $\bar P_l^r$ connecting terminal $s$, i.e., $f_{\phi}(s)$.
Thus $f_{\phi}(s) = \phi(s^-s^+) \geq r(s)$; 
in particular $f_{\phi}$ is feasible to $r$. 
Moreover $f_{\phi}(s) = r(s)$ if $s \in U_s$, proving the condition (4).

Finally consider condition (3) for $P = (s = i_0, i_1,\ldots, i_{l} = t) \in {\cal P}$.
Observe from the construction of ${\cal D}_p$ 
that $p_{i_k} \neq p_{i_{k+1}}$,
and $D(p_{i_{k-1}},p_{i_{k+1}}) = D(p_{i_{k-1}},p_{i_{k}}) + D(p_{i_{k}},p_{i_{k+1}})$.
Since the metric space ${\cal T}$ is a tree, we obtain $D(p (P)) = D(p_s,p_t)$; see the next lemma.
%Thus $p$ and $f_{\phi}$ satisfy all conditions 
%in Lemma~\ref{lem:optimality}. 
%
\end{proof}

In the last part of the proof, we use the following 
distance property of a tree, which we can easily prove (by an inductive argument).
\begin{Lem}
Let $G$ be a tree (with a positive edge-length), 
and let $x = x_0,x_1,\ldots,x_l = y$ be a sequence of vertices in $G$.
Suppose that
\begin{itemize}
\item[{\rm (1)}] $x_i \neq x_{i+1}$ for $i=0,1,2,\ldots,l-1$, and
\item[{\rm (2)}] $d(x_{i-1}, x_{i+1}) = d(x_{i-1}, x_{i}) + d(x_{i}, x_{i+1})$ 
for $i=1,2,\ldots,l-1$.
\end{itemize}
Then $\sum_{i=0}^{l-1} d(x_{i},x_{i+1}) = d(x,y)$.
\end{Lem}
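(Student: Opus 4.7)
My plan is to interpret the telescoped walk
\[
W := P(x_0,x_1) \cdot P(x_1,x_2) \cdots P(x_{l-1},x_l),
\]
where $P(u,v)$ denotes the unique geodesic between $u$ and $v$ in $G$, as a non-backtracking walk in $G$, and then to appeal to the elementary fact that a non-backtracking walk in a tree is automatically a simple path. Once $W$ is known to be simple, it must coincide with the unique geodesic $P(x,y)$, and the identity on lengths is immediate: the length of $W$ equals $\sum_{i=0}^{l-1} d(x_i,x_{i+1})$ by construction, and equals $d(x,y)$ as the length of $P(x,y)$.

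The first step is to check the non-backtracking property of $W$. Within each segment $P(x_{i-1},x_i)$ there is nothing to do since a geodesic in a tree is simple. At a junction vertex $x_i$ for $1 \leq i \leq l-1$, hypothesis (2) is precisely the statement that $x_i$ lies on $P(x_{i-1},x_{i+1})$, which forces $P(x_{i-1},x_i) \cup P(x_i,x_{i+1})$ to be a simple path and, in particular, the two segments to share no edge incident to $x_i$. Hypothesis (1), $x_i \neq x_{i+1}$, guarantees that the ``edge leaving $x_i$'' along $W$ is well-defined, so that the non-backtracking condition at $x_i$ is meaningful.

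The second step is the auxiliary fact: any walk $v_0,v_1,\ldots,v_k$ in a tree satisfying $v_{j-1} \neq v_{j+1}$ for all $1 \leq j \leq k-1$ is a simple path. I would prove this by contradiction: choose indices $i<j$ with $v_i = v_j$ and $j-i$ minimum; then $j-i=1$ is ruled out by the absence of self-loops, $j-i=2$ contradicts $v_i \neq v_{i+2}$, and $j-i \geq 3$ makes the distinct vertices $v_i,v_{i+1},\ldots,v_{j-1}$ together with the closing edge a cycle in $G$, contradicting the tree hypothesis. The only care required here is to verify that the edges along the putative closed subwalk are pairwise distinct, which is forced by the minimality of $j-i$ together with the non-backtracking property.

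Combining the two steps, $W$ is a simple path from $x_0 = x$ to $x_l = y$, hence equals $P(x,y)$, and equating lengths yields the lemma. The main (minor) obstacle is the bookkeeping at the junction vertices that translates hypothesis (2) into the non-backtracking condition; once this has been pinned down, the reduction to the elementary tree fact finishes the argument cleanly, and no explicit induction on $l$ is needed.
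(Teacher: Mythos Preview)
Your argument is correct. The paper does not actually write out a proof of this lemma; it merely asserts that one ``can easily prove (it) by an inductive argument.'' Your route is genuinely different: rather than inducting on $l$, you concatenate the geodesic segments into a single walk $W$, observe that hypotheses (1) and (2) translate precisely into the statement that $W$ is non-backtracking, and then invoke the elementary fact that a non-backtracking walk in a tree is a simple path. This replaces the induction with a one-shot combinatorial reduction. Both approaches are equally elementary; yours has the mild advantage of making the role of hypothesis (1) transparent---it is exactly what guarantees that the outgoing edge at each junction $x_i$ exists---which aligns nicely with the paper's subsequent remark that the example $(x,z,z,x)$ shows condition (1) cannot be dropped.
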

A simple example $(x,z,z,x)$ shows that the condition (1) is necessary.
\begin{Rem}[Role of cost positivity]
One may wonder why the edge-cost positivity (CP) is needed.
Consider the case where some of edges have zero cost.
There may exist edges $ij \in E_{=}$ with $D(p_i, p_j) = 0$.
Therefore we need to add edges to ${\cal D}_p$ corresponding to those edges.
Even if we manage to construct a set ${\cal P}$ of paths from 
a feasible circulation in a modified network,
consecutive nodes in some path $P$ may have the same potential, 
and does not guarantee that $P$ is $p$-geodesic 
($P$ may connect the same terminal).
\end{Rem}

\begin{Rem}[{How to make edge-cost positive}]\label{rem:howto}
The modification is the same as that given in \cite{GK97,Kar94} used for (M).
Let $Z$ denote the set of edges $e$ with $a(e) = 0$. 
Define a positive edge-cost $a'$ by
\begin{equation}\label{eqn:a'}
a'(e) := \left\{ \begin{array}{ll}
1 & {\rm if}\ e \in Z, \\
(2C(Z) + 1) a(e) & {\rm otherwise},
\end{array}\right. \quad (e \in E).
\end{equation}
Then 
any half-integral optimal solution $x$ in (L) with edge-cost $a'$ 
is also optimal to (L) with edge-cost $a$.
Indeed,
by the half-integrality theorem~(Theorem~\ref{thm:fuku}), 
it suffices to show that for every half-integral solution $y$ in (L) with cost $a$
it holds
\[
a x - a y \leq 0, 
\]
where we simply denote $\sum a(e) x(e)$ by $a x$.
Indeed, we have
\[
 (2C(Z) + 1) a x - (2C(Z) +1) a y  = a' x - a' y - x(Z) + y(Z)  \leq C(Z).
\]
This implies that $ax - ay \leq C(Z)/(2C(Z) + 1) < 1/2$.
Since $ax$ and $ay$ are half-integers, we have $ax - ay \leq 0$, as required.
\end{Rem}

\subsection{Proximity scaling algorithm}\label{subsec:proximity_scaling}
In this section we present an algorithm to prove 
Theorem~\ref{thm:multiflow_main}.
By the arguments in the previous section, 
it suffices to solve (D).
Let $\omega: {\cal T}^n \to \overline{\RR}$ be defined by 
\begin{equation}
\omega(p) := \sum_{ij \in E} c(ij) (D(p_i,p_j) -a(ij))^+  + 
\sum_{s \in S} I_s (p_s) - r(s) D(0,p_s) \quad (p \in {\cal T}^n),
\end{equation}
where $I_s$ denotes the indicator function of ${\cal T}_s$:
\[
I_s (q) := \left\{\begin{array}{ll}
0 & {\rm if}\ q \in {\cal T}_s, \\
\infty & {\rm otherwise},
\end{array} \right. (q \in {\cal T}).
\]
Then (D) is equivalent to the minimization of $\omega$.
The range in which an optimum exists is given as follows, where $A := \max \{ a(e) \mid e \in E\}$.
\begin{Lem}\label{lem:region}
There exists a proper half-integral optimal potential $p$
such that $D(0,p_i) \leq n A$ for $i=1,2,\ldots,n$.
\end{Lem}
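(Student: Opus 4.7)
The plan is to start from a proper half-integral optimal potential $p^{0}$, whose existence is granted by Proposition~\ref{prop:min-max}, and to exhibit the desired potential $p$ as its truncation at radius $nA$: for each $i \in V$, replace $p_{i}^{0}$ by the unique point on the ray $[0, p_{i}^{0}] \subseteq \mathcal T_{s(i)}$ at distance $\min(D(0, p_{i}^{0}), nA)$ from the origin (where $s(i)$ is the terminal with $p_{i}^{0} \in \mathcal T_{s(i)}$). Since $nA$ is an integer, $p$ remains half-integral, the containment $p_{s} \in \mathcal T_{s}$ for $s \in S$ is preserved, and properness $D(0, p_{i}) \le D(0, p_{s})$ is inherited from $p^{0}$. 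Truncation is also a contraction on $\mathcal T$, so $D(p_{i}, p_{j}) \le D(p_{i}^{0}, p_{j}^{0})$ for every pair $i, j$, with equality when neither endpoint is truncated.

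The heart of the argument is to verify that truncation does not strictly decrease the objective of (D), which (after cancelling the untouched edges) amounts to
\[
\sum_{s \in S} r(s)\bigl(D(0, p_{s}^{0}) - nA\bigr)^{+} \;\le\; \sum_{ij \in E} c(ij)\Bigl[(D(p_{i}^{0}, p_{j}^{0}) - a(ij))^{+} - (D(p_{i}, p_{j}) - a(ij))^{+}\Bigr]. \qquad (\ast)
\]
Once $(\ast)$ is established, the optimality of $p^{0}$ forces the reverse inequality, so $(\ast)$ holds with equality and $p$ is also optimal, completing the lemma.

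To prove $(\ast)$, I would pair $p^{0}$ with an optimal feasible multiflow $f$ via the optimality criterion in Lemma~\ref{lem:optimality}. For each terminal $s$ with $D(0, p_{s}^{0}) > nA$, condition~(4) gives $f(s) = r(s)$, and the $s$-portion of $f$ decomposes into $p^{0}$-geodesic paths of at most $n - 1$ edges. Along a path $P$ from $s$ to $t$, the $p^{0}$-geodesic identity $\sum_{ij \in P} D(p_{i}^{0}, p_{j}^{0}) = D(0, p_{s}^{0}) + D(0, p_{t}^{0})$, coupled with $\sum_{ij \in P} a(ij) \le (n-1)A < nA \le D(0, p_{s}^{0})$, forces strict inequality $D(p_{i}^{0}, p_{j}^{0}) > a(ij)$ on enough edges of $P$; by condition~(1) these edges carry $f(ij) = c(ij) \ge \lambda(P)$. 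Weighting the per-edge drop $D(p_{i}^{0}, p_{j}^{0}) - D(p_{i}, p_{j})$ by $c(ij)$ on these tight edges, summing over $P$ with weight $\lambda(P)$, and telescoping recovers the demand term $r(s)(D(0, p_{s}^{0}) - nA)^{+}$ on the left side of $(\ast)$.

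The main obstacle is the bookkeeping on edges of $P$ where truncation pushes $D(p_{i}, p_{j})$ below $a(ij)$; there the clean identity $(D^{0} - a)^{+} - (D - a)^{+} = D^{0} - D$ degenerates to $(D^{0} - a)^{+} - (D - a)^{+} = D^{0} - a$, and one must charge only the portion $D^{0} - a$ of the drop against the path. A cleaner route, which I would adopt if the direct accounting becomes too intricate, is to induct on the total excess $\sum_{s}(D(0, p_{s}^{0}) - nA)^{+}$: at each step truncate only the single outermost ``layer'' at distance exceeding $nA$ inward by $\tfrac{1}{2}$, use the above path-decomposition of $f$ to show that this single half-integer step does not decrease the objective (a local version of $(\ast)$), and iterate. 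The handling of multiple ``bad'' terminals whose flow paths share edges is then controlled by the fact that each tight edge's capacity $c(ij)$ distributes among the paths through it exactly according to the $\lambda(P)$ weights.
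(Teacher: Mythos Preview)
Your truncation map does \emph{not} preserve optimality, so the approach fails at its core step. Here is a small counterexample. Take $V=\{s,t,v\}$ with $S=\{s,t\}$, edges $sv,\,vt$, costs $a(sv)=a(vt)=1$, capacities $c(sv)=c(vt)=1$, and demands $r(s)=r(t)=1$. Then $n=3$, $A=1$, $nA=3$. The potential $p^{0}_{s}=10e_{s}$, $p^{0}_{t}=10e_{t}$, $p^{0}_{v}=5e_{s}$ is proper and half-integral, with objective $10+10-(5-1)-(15-1)=2$, which equals the optimum of~(N) (the unique $S$-path has cost~$2$). Truncating at radius $nA=3$ yields $p_{s}=3e_{s}$, $p_{t}=3e_{t}$, $p_{v}=3e_{s}$, whose objective is $3+3-0-(6-1)=1<2$. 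So $(\ast)$ is false, and your alternative half-step induction fails for the same reason: once $p_{s}$ and $p_{v}$ are pulled close enough that $D(p_{s},p_{v})<a(sv)$, further truncation keeps decreasing $r(s)D(0,p_{s})$ while the edge $sv$ contributes nothing on the right-hand side. In the per-path accounting this shows up exactly where you flagged the ``main obstacle'': on the unique path $P=(s,v,t)$ one gets $\Delta_{s}+\Delta_{t}=14$ but $\sum_{ij\in P}\bigl[(D^{0}_{ij}-a_{ij})^{+}-(D_{ij}-a_{ij})^{+}\bigr]=4+9=13$.

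The paper avoids this by never truncating at a fixed radius. Instead it observes, by pigeonhole, that if $D(0,p_{s})>nA$ then among the at most $n$ distinct values $\{D(0,p_{i}):p_{i}\in{\cal T}_{s}\}$ there is a consecutive gap $D(q_{k-1},q_{k})>A$. Let $X=\{i:D(0,p_{i})\ge D(0,q_{k})\}$ and shift every node in $X$ toward the origin by $\alpha=D(q_{k-1},q_{k})-A$. Because every edge $ij\in\delta X$ satisfies $D(p_{i},p_{j})\ge D(q_{k-1},q_{k})>A\ge a(ij)$, the shift changes each such term linearly by $-\alpha\,c(ij)$, and the objective changes by exactly $-\alpha(c(\delta X)-r(s))\le 0$, using only the feasibility assumption $c(\delta X)\ge r(s)$. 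Iterating yields the lemma. The crucial difference from your approach is that the paper removes \emph{slack in a gap} rather than clipping from the outside, so the relative order of potentials along each ray is preserved and no edge is pushed from $E_{>}$ into $E_{<}$ during a single step.
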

\begin{proof}
Take a proper half-integral optimal potential $p$.
Suppose that $D(0, p_s) > n A$ for $s \in S$, 
and that $\{p_i \mid i \in U_s\} = \{q_1,q_2,\ldots, q_l = p_s\}$
with $D(0,q_j) < D(0, q_{j+1})$. Let $q_0 := 0$.
Then $l \leq n$ and $\sum_{j=1}^{l} D(q_{j-1}, q_{j}) = D(0, p_s) > n A$.
Thus there is an index $k (\geq 1)$ with $D(q_{k-1}, q_{k}) > A$.
Let $X := \{ i \in U_s \mid p_i \in \{q_k, q_{k+1},\ldots,q_l\}\}$.
For each $ij \in \delta X$, it holds $D(p_{i},p_{j}) \geq D(q_{k-1}, q_{k}) > A \geq a(ij)$. Hence $\delta X \subseteq E_{>}$.
Let $\alpha := D(q_{k-1}, q_{k}) - A > 0$, which is a half-integer. 
Define proper half-integral potential $p'$ by 
\begin{equation}
p'_i := \left\{ \begin{array}{ll}
p_i - \alpha e_s & {\rm if}\ i \in X (\ni s), \\
p_i & {\rm otherwise},
\end{array} \right. \quad (i \in V).
\end{equation}
Then $D(p'_i, p'_j) = D(p_i, p_j) - \alpha$ if $ij \in \delta X$ with $i \in X$, 
and $D(p'_i, p'_j) = D(p_i,p_j)$ otherwise.
Also $D(0, p'_s) = D(0, p_s) - \alpha$ and $D(0, p'_t) = D(0, p_t)$ for other terminal $t \neq s$.
By feasibility~(\ref{eqn:feasible}), 
we obtain
\begin{eqnarray*}
\omega(p') - \omega(p) & = &\sum_{ij \in \delta X} c(ij) \{ D(p'_i, p'_j) - D(p_i,p_j) \} - r(s) \{ D(0, p'_s) - D(0,p_s) \} \\
& = & - \alpha \{ c(\delta X) - r(s) \} \leq 0. 
\end{eqnarray*}
Thus $p'$ is also optimal. 
Let $p := p'$.
Repeat this procedure to obtain an optimal potential $p$ as required.
\end{proof}
Let $L := \lceil \log n A \rceil$, and
let ${\cal T}'$ be the subset of points $q$ of ${\cal T}$ with $D(0, q) \leq 2^{L}$.
By the above lemma, 
(D) is equivalent to the minimization of $\omega$ over $({\cal T}')^n$.
For $\sigma = -1, 0,1,2,\ldots, L$, 
let $T_\sigma$ denote the tree 
on ${\cal T}' \cap (2^{\sigma} \ZZ^S)$ 
such that vertices
$u,v$ are adjacent if $D(u,v) = 2^{\sigma}$.
In particular, $T_\sigma$ is a (graph-theoretical) tree discretizing ${\cal T}'$.
The graph metric of $T_{\sigma}$ is denoted by $d_{\sigma}$.
Then  it holds
\[
2^{\sigma} d_{\sigma}(u,v) = D(u,v).
\]
The two color classes of $T_\sigma$ are denoted by $B_\sigma$ and $W_\sigma$, and 
suppose $0 \in B_\sigma$.
Then $T_{\sigma-1}$ is naturally identified with the subdivision of $T_\sigma$. Hence
\begin{eqnarray}
T_{\sigma-1} = (T_\sigma)^*, \quad B_{\sigma-1} = T_\sigma.
\end{eqnarray}
For $s \in S$, 
define $f_{s, \sigma}: T_{\sigma} \to \overline{\RR}$ by
\[
f_{s, \sigma}(p) := I_s (p) - r(s) 2^{\sigma} d_\sigma(0,p) \quad (p \in T_{\sigma}).
\]
For each edge $ij \in E$, define $g_{ij, \sigma}: \ZZ \to \RR$ by
\[
g_{ij, \sigma}(z) := c(ij) (2^{\sigma} z - a(ij))^+ \quad (z \in \ZZ).
\]
Let $\omega_{\sigma}: {T_{\sigma}}^{n} \to \overline{\RR}$ 
be the restriction of $\omega$ to ${T_{\sigma}}^n$, which is given by
\[
\omega_{\sigma}(p) =  \sum_{s \in S}  f_{s, \sigma}(p_s) + \sum_{ij \in E} g_{ij, \sigma}(d_\sigma(p_i,p_j)) \quad  (p \in {T_{\sigma}}^{n}).
\]
For each edge $ij$, 
consider even function $\bar g_{ij, \sigma}: \ZZ \to \RR$ 
defined as in Section~\ref{subsec:2-separable}.
Namely let
$\bar g_{ij, \sigma}(z) := (g_{ij, \sigma}(z-1) +  g_{ij, \sigma}(z+1))/2$ if $z$ is odd 
and $\bar g_{ij, \sigma}(z) := g_{ij, \sigma}(z)$ if $z$ is even. 
Define $\bar \omega_{\sigma}: {T_{\sigma}}^{n} \to \overline{\RR}$ by
\[
\bar \omega_{\sigma}(p) =  \sum_{s \in S}  f_{s, \sigma}(p_s) + \sum_{ij \in E} \bar g_{ij, \sigma}(d_\sigma(p_i,p_j)) \quad  (p \in {T_{\sigma}}^{n}).
\]
\begin{Lem}\label{lem:omega}
\begin{itemize}
\item[{\rm (1)}] $\omega_{\sigma}$ and $\bar \omega_{\sigma}$ 
are (2-separable) L-extendable and L-convex on ${T_\sigma}^n$, respectively. 
\item[{\rm (2)}] $\bar \omega_{\sigma}$ 
is an L-convex relaxation of $\omega_{\sigma+1}$.
\item[{\rm (3)}] Any minimizer of $\bar \omega_{-1}$  is optimal to (D).
\end{itemize}
\end{Lem}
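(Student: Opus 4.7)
The plan is to dispatch the three parts separately, relying on Theorem~\ref{thm:2separable} for~(1) and~(2), and on an explicit linearity observation for~(3). For~(1), I would match $\omega_\sigma$ against the $2$-separable convex template of Section~\ref{subsec:2-separable}: unary terms $p_s\mapsto f_{s,\sigma}(p_s)$ for $s\in S$ (zero on non-terminal coordinates), pairwise edge-distance terms $(p_i,p_j)\mapsto g_{ij,\sigma}(d_\sigma(p_i,p_j))$ for $ij\in E$, and no $h_{ij}$-terms. That $g_{ij,\sigma}(z)=c(ij)(2^\sigma z-a(ij))^+$ is nondecreasing and convex on $\ZZ$ is immediate. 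The only nontrivial point is tree-convexity of $f_{s,\sigma}$ on $T_\sigma$: its effective domain is the half-line ${\cal T}_s\cap T_\sigma$, on which it is affine with slope $-r(s)\,2^\sigma$, while it is $+\infty$ elsewhere; since any geodesic in $T_\sigma$ passes through the origin at most once, the first differences of $f_{s,\sigma}$ along such a geodesic are nondecreasing, giving convexity in the tree sense. Theorem~\ref{thm:2separable} then yields both that $\omega_\sigma$ is $2$-separable convex (hence L-extendable) and that $\bar\omega_\sigma$ is L-convex on $T_\sigma^n$.

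For~(2), L-convexity of $\bar\omega_\sigma$ on $(T_{\sigma+1})^{*n}=T_\sigma^n$ is already provided by~(1), so the remaining task is purely to verify that the restriction of $\bar\omega_\sigma$ to $B_\sigma^n=T_{\sigma+1}^n$ coincides with $\omega_{\sigma+1}$. I would use the scaling identity $d_\sigma(u,v)=2\,d_{\sigma+1}(u,v)$ for $u,v\in T_{\sigma+1}$, which makes $d_\sigma(p_i,p_j)$ even whenever $p\in T_{\sigma+1}^n$; at such even arguments $\bar g_{ij,\sigma}$ agrees with $g_{ij,\sigma}$ by definition, and a one-line computation gives $\bar g_{ij,\sigma}(d_\sigma(p_i,p_j))=g_{ij,\sigma+1}(d_{\sigma+1}(p_i,p_j))$. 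The same scaling gives $f_{s,\sigma}(p_s)=f_{s,\sigma+1}(p_s)$, and summing completes the identification.

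Part~(3) is the substantive claim, and my plan is to establish the stronger pointwise identity $\bar\omega_{-1}=\omega_{-1}$ on $T_{-1}^n$. The key observation is that the only kink of $g_{ij,-1}(z)=c(ij)(z/2-a(ij))^+$ lies at $z=2\,a(ij)$, which is an \emph{even} integer because $a(ij)\in\ZZ_+$; hence $g_{ij,-1}$ is affine in a neighborhood of every odd integer, and its even-averaging $\bar g_{ij,-1}(z)=(g_{ij,-1}(z-1)+g_{ij,-1}(z+1))/2$ coincides with $g_{ij,-1}(z)$ there. Combined with the trivial equality at even integers, this gives $\bar g_{ij,-1}\equiv g_{ij,-1}$ and therefore $\bar\omega_{-1}\equiv\omega_{-1}$. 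Since $\omega_{-1}$ is just the restriction of $\omega$ to the half-integer product $T_{-1}^n$, Lemma~\ref{lem:region} then gives $\min\bar\omega_{-1}=\min\omega_{-1}=\min\omega$, so every minimizer of $\bar\omega_{-1}$ is a global minimizer of $\omega$, i.e., an optimal potential for~(D).

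The main content lies in~(3), and the decisive ingredient is the integrality of the edge-costs $a(ij)$, which places the kink of $g_{ij,-1}$ at an even integer and so makes the half-integer scale ``fine enough'' for the L-convex relaxation to be exact on the whole product $T_{-1}^n$. Parts~(1) and~(2) are essentially bookkeeping against Theorem~\ref{thm:2separable} with attention to the scaling factors $2^\sigma$; the only care needed there is the handling of the $+\infty$-valued indicator $I_s$ inside $f_{s,\sigma}$ when verifying tree-convexity.
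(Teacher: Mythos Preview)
Your proposal is correct and follows essentially the same route as the paper: for (1) and (2) you invoke the $2$-separable template (Theorem~\ref{thm:2separable}) together with tree-convexity of $f_{s,\sigma}$ and the parity observation that $d_\sigma$ is even on $B_\sigma^n$, exactly as the paper does; for (3) you prove the pointwise identity $\bar\omega_{-1}=\omega_{-1}$ via the observation that the kink of $g_{ij,-1}$ sits at the even integer $2a(ij)$, which is precisely the paper's ``either $(z-1)/2,(z+1)/2\le a(ij)$ or both $\ge a(ij)$'' argument in different words. The only cosmetic difference is that you cite Lemma~\ref{lem:region} (which already packages the half-integrality of Proposition~\ref{prop:min-max} together with the bounded-domain reduction) where the paper cites Proposition~\ref{prop:min-max} directly.
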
 
\begin{proof}
(1). 
Observe that $f_{s, \sigma}$ is convex on $T_{\sigma}$.
Obviously $g_{ij, \sigma}$ is convex on $\ZZ$. 
Apply Lemma~\ref{lem:tree_convexity} and Theorem~\ref{thm:2separable} 
to obtain the claim.

(2). Let $p \in {T_{\sigma+1}}^n = {B_{\sigma}}^n$. 
Then $d_{{\sigma}}(p_i,p_j)$ is an even integer, 
and $\bar g_{ij, \sigma}(d_{\sigma}(p_i,p_j)) = g_{ij, \sigma}(d_{\sigma}(p_i,p_j))$.
Hence $\omega_{\sigma}(p) = \omega(p) = \omega_{\sigma+1}(p)$.

(3). We show $\bar \omega_{-1} = \omega_{-1}$.
From the view of the proof of (2), 
it suffices to show that $\bar g_{ij,-1}(z)= g_{ij,-1}(z)$ for any odd integer $z$.
Since $a(ij)$ is an integer, either
$(z-1)/2, (z+1)/2 \leq a(ij)$ or $(z-1)/2, (z+1)/2 \geq a(ij)$ holds.
From this, we see $\bar g_{ij,-1}(z)= g_{ij,-1}(z)$.
% Thus we obtain
%\begin{eqnarray*}
%\bar g_{ij,-1} (z)  &= & (g_{ij,-1}(z-1) + g_{ij,-1}(z+1))/2  \\
%& = & \{ c(ij) ((z-1)/2 - a(ij))^+  + c(ij) ((z+1)/2 - a(ij))^+ \}/2 \\
%& = & c(ij) (z/2 - a(ij))^+  = g_{ij,-1}(z).
%\end{eqnarray*}
Notice that $T_{-1}$ is the set of half-integral potentials.
The claim follows from the half-integrality~(Proposition~\ref{prop:min-max}). 
\end{proof}
Thus our goal is to minimize the L-convex function $\omega_{-1}$.
We are now ready to describe our scaling algorithm to solve (D):
\begin{description}
\item[Proximity scaling algorithm:]
\item[Step 0:] Replace $a$ by $a'$ defined by (\ref{eqn:a'}) if $a$ is not positive.
Let $\sigma := L = \lceil \log n A \rceil$ and $p^{\sigma+1}:= (0,0,\ldots,0) \in {B_{\sigma}}^n$.
\item[Step 1:] Find a minimizer $p^{\sigma} \in {T_\sigma}^{n}$ of $\bar \omega_{\sigma}$
by the steepest descent algorithm 
with initial point $p^{\sigma + 1}$.
\item[Step 2:] If $\sigma = -1$, then $p = p^{-1}$ is an optimal solution of (D), and go to step 3.
Otherwise, let $\sigma \leftarrow \sigma - 1$ and go to step 1.
\item[Step 3:] Construct ${\cal D}_p$, and find an integral feasible circulation $\phi$. 
Then $f_{\phi}$ is a half-integral optimal multiflow in (N) as required. 
\end{description}
The time complexity of step 1 is estimated as follows.
\begin{Lem}
$d_{\sigma} ({\rm opt}(\bar \omega_{\sigma}), p^{\sigma + 1}) \leq 6n  + 4$.
\end{Lem}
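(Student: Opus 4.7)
The plan is to chain one application each of the persistency theorem (Theorem~\ref{thm:persistency}) and the proximity theorem (Theorem~\ref{thm:proximity}), navigating three successive levels of the lattice hierarchy. The starting point $p^{\sigma+1}$ minimizes $\bar\omega_{\sigma+1}$, which is not the same function as $\omega_{\sigma+1} = \bar\omega_\sigma|_{B_\sigma^n}$, so proximity cannot be applied directly at $p^{\sigma+1}$. My intended route is to first descend one level via persistency to the coarser lattice $T_{\sigma+2}^n$, then ascend via proximity to a minimizer sitting inside $T_{\sigma+1}^n \subseteq T_\sigma^n$, and finally argue that this minimizer already lies in $\opt(\bar\omega_\sigma)$.

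First, Lemma~\ref{lem:omega}(2) identifies $\bar\omega_{\sigma+1}$ as an L-convex relaxation of $\omega_{\sigma+2}$, so Theorem~\ref{thm:persistency} applied at $p^{\sigma+1}$ supplies a minimizer $q$ of $\omega_{\sigma+2}$ with $q \in \mathcal{F}(p^{\sigma+1}) \cap T_{\sigma+2}^n$, and therefore $d_{\sigma+1}(p^{\sigma+1}, q) \leq 1$, equivalently $d_\sigma(p^{\sigma+1}, q) \leq 2$ under the factor-two embedding of $T_{\sigma+1}$ into $T_\sigma$. Next, using the identity $\omega_{\sigma+2} = \omega_{\sigma+1}|_{T_{\sigma+2}^n}$, I observe that $q$ is simultaneously a minimizer of $\omega_{\sigma+1}$ over $B_{\sigma+1}^n$; viewing $\omega_{\sigma+1}$ as midpoint L-extendable on $T_{\sigma+1}^n$ with L-convex relaxation $\bar\omega_\sigma$, Theorem~\ref{thm:proximity} then yields a minimizer $r$ of $\omega_{\sigma+1}$ on $T_{\sigma+1}^n$ with $d_{\sigma+1}(q, r) \leq 2n$, i.e.\ $d_\sigma(q, r) \leq 4n$.

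Finally, I must check that $r$ is a minimizer of $\bar\omega_\sigma$ on all of $T_\sigma^n$, not merely of its restriction to $B_\sigma^n = T_{\sigma+1}^n$. This is where a second use of persistency is needed: applied to $(\bar\omega_\sigma, \omega_{\sigma+1})$, it produces, from any minimizer $x^*$ of $\bar\omega_\sigma$, a minimizer $y^* \in \mathcal{F}(x^*) \cap T_{\sigma+1}^n$ of $\omega_{\sigma+1}$ with $\bar\omega_\sigma(y^*) \geq \bar\omega_\sigma(x^*)$; comparing this with the trivial inequality $\min \bar\omega_\sigma \leq \min \omega_{\sigma+1}$ pins down the exactness $\min \bar\omega_\sigma = \min \omega_{\sigma+1}$. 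Hence $\bar\omega_\sigma(r) = \omega_{\sigma+1}(r) = \min \omega_{\sigma+1} = \min \bar\omega_\sigma$, so $r \in \opt(\bar\omega_\sigma)$, and the triangle inequality gives $d_\sigma(p^{\sigma+1}, r) \leq 2 + 4n \leq 6n + 4$. I expect the main conceptual obstacle to be keeping straight which of $\omega_\bullet$ versus $\bar\omega_\bullet$ plays the role of $h$ versus $g$ at each invocation, together with the factor-two conversion between $d_\sigma$ and $d_{\sigma+1}$; in particular, the exactness observation in the last step is what prevents the argument from stalling at a point that minimizes $\bar\omega_\sigma$ only over $B_\sigma^n$ rather than over all of $T_\sigma^n$.
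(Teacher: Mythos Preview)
Your first two steps match the paper's proof exactly: persistency gives $q \in \opt(\omega_{\sigma+2})$ with $d_{\sigma+1}(p^{\sigma+1},q)\le 1$, and proximity gives $r \in \opt(\omega_{\sigma+1})$ with $d_{\sigma+1}(q,r)\le 2n$. The divergence, and the gap, is in your third step.

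Your ``exactness'' argument does not go through. Persistency applied to the pair $(g,h)=(\bar\omega_\sigma,\omega_{\sigma+1})$ gives, from a minimizer $x^*$ of $\bar\omega_\sigma$, a minimizer $y^*\in B_\sigma^n$ of $\omega_{\sigma+1}$; since $\bar\omega_\sigma(y^*)=\omega_{\sigma+1}(y^*)$ and $x^*$ minimizes $\bar\omega_\sigma$, this yields
\[
\min \bar\omega_\sigma = \bar\omega_\sigma(x^*) \le \bar\omega_\sigma(y^*) = \min \omega_{\sigma+1},
\]
which is the \emph{same} direction as the trivial restriction inequality, not the opposite one. So you have not established $\min \bar\omega_\sigma = \min \omega_{\sigma+1}$, and in fact this equality need not hold: for odd $z$ one has $\bar g_{ij,\sigma}(z)=(g_{ij,\sigma}(z-1)+g_{ij,\sigma}(z+1))/2$, which is generally strictly larger than $g_{ij,\sigma}(z)$ but is not tied to values on $B_\sigma^n$ in a way that forces the minimum to be attained there. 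Consequently $r$, a minimizer of $\omega_{\sigma+1}$ on $T_{\sigma+1}^n=B_\sigma^n$, need not minimize $\bar\omega_\sigma$ over all of $T_\sigma^n$.

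The paper closes this gap by a \emph{second} application of the proximity theorem: since $\bar\omega_\sigma$ is itself midpoint L-extendable on $T_\sigma^n$ and $r$ (the paper's $q'$) minimizes $\bar\omega_\sigma$ over $B_\sigma^n$, there is $q^*\in\opt(\bar\omega_\sigma)$ with $d_\sigma(q^*,r)\le 2n$. Summing gives $d_\sigma(p^{\sigma+1},q^*)\le 2+4n+2n = 6n+2 \le 6n+4$. (You also need to handle the base cases $\sigma\in\{L,L-1\}$ separately, since $\omega_{\sigma+2}$ is not defined there; the paper dispatches these by the trivial diameter bound.)
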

\begin{proof}
We show the existence of 
a minimizer $q^*$ of $\bar \omega_{\sigma}$ with $d_{\sigma} (q^*, p^{\sigma + 1}) \leq 6n  + 4$.
First consider the case where $\sigma = L$ or $L-1$. 
In this case, the diameter of $T_{\sigma}$ is $2$ or $4$.
Hence the inequality obviously holds. 
Consider the case $\sigma \leq L - 2$.
Then $p^{\sigma+1}$ is a minimizer of an L-convex relaxation 
$\bar \omega_{\sigma+1}$ of $\omega_{\sigma+2}$ (Lemma~\ref{lem:omega}~(2)).
By the persistency (Theorem~\ref{thm:persistency}), 
there exists a minimizer $q$ of $\omega_{\sigma+2}$ 
(over ${T_{\sigma+2}}^{n}$) 
with $d_{\sigma+1} (p^{\sigma+1}, q) \leq 1$.
Since $q$ is also a minimizer of L-extendable function 
$\omega_{\sigma+1}$ (on ${T_{\sigma+1}}^{n} =(B_{\sigma+1} \cup W_{\sigma+1})^{n}$) 
over ${B_{\sigma+1}}^{n}$, 
by proximity theorem (Theorem~\ref{thm:proximity}), 
there is a minimizer $q'$ of $\omega_{\sigma+1}$ (over ${T_{\sigma+1}}^{n})$ 
with $d_{\sigma+1}(q',q) \leq 2 n$.
Since $\bar \omega_{\sigma}$ is an L-convex relaxation of $\omega_{\sigma+1}$, 
the restriction of $\bar \omega_{\sigma}$ to ${B_{\sigma}}^n = {T_{\sigma+1}}^n$ 
is equal to $\omega_{\sigma+1}$.
Hence $q'$ is a minimizer of $\bar \omega_{\sigma}$ over ${B_{\sigma+1}}^n$.
Since $\bar \omega_{\sigma}$ is also midpoint L-extendable  on 
${T_\sigma}^{n} = (B_{\sigma} \cup W_\sigma)^{n}$,
by the proximity theorem,  
there is a minimizer $q^*$ of $\bar \omega_{\sigma}$ over ${T_{\sigma}}^{n}$ 
with $d_{\sigma}(q^*, q') \leq 2 n$.
Notice $2 d_{\sigma +1} = d_{\sigma}$.
Thus we have
\[
d_{\sigma}(p^{\sigma+1},q^*) \leq d_{\sigma}(p^{\sigma+1}, q) + d_{\sigma}(q, q') + d_{\sigma}(q', q^*) 
\leq 2 + 4 n + 2 n
\]
as required.
\end{proof}
Therefore, by Theorem~\ref{thm:bound}, the number of iterations 
of the steepest descent algorithm is at most $6n + 4$.
Since $\bar \omega_{\sigma}$ is a 2-separable L-convex function consisting of $O(m)$ terms,
and the maximum degree of $T_{\sigma}$ is the number $k$ of terminals, 
by Theorem~\ref{thm:approx} we can find an optimal solution in 
$O( n {\rm MF}(kn, km))$ time. 
The total step is $O( n L \, {\rm MF}(kn,km))$, 
where $L$ can be taken to be $\lceil \log 2 (C(Z) + 1) n A \rceil = O(\log n A C)$.
This proves Theorem~\ref{thm:multiflow_main}.

\paragraph{Algorithm for (M).}
Let us sketch a proximity scaling algorithm to solve (M).
Corresponding to (D), consider the following location problem on ${\cal T}$.
\begin{eqnarray*}
\mbox{(D$'$): Max.} && \sum_{ij \in E} c(ij) (D(p_i, p_j) - a(ij))^+ \\
\mbox{s.t.} && p = (p_1,p_2,\ldots, p_n) \in 
{\cal T}^n, \\
                  && p_s = \mu e_s/2 \quad (s \in S).
\end{eqnarray*}
Again a feasible solution of (D$'$) is called a potential, and called half-integral 
if each $p_i$ is half-integral. 
The following duality is implicit in \cite{ HHMPA, Kar94}.
\begin{Prop}[see \cite{HHMPA, Kar94}]
The maximum value of (M) is equal to the minimum value of (D$'$).
Moreover, if $\mu$ is an integer, then there exists a half-integral optimal potential in (D$'$).
\end{Prop}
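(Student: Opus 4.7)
The plan is to mirror, almost verbatim, the argument sketched for Proposition~\ref{prop:min-max}, replacing the demand term $r(s)D(0,p_s)$ by the boundary constraint $p_s = \mu e_s/2$. First I would write (M) as the path-flow LP
\[
\max \sum_{P} \lambda_P \bigl( \mu - a(P) \bigr) \quad \text{s.t.} \quad \sum_{P \ni e} \lambda_P \leq c(e) \ (e \in E), \ \lambda_P \geq 0,
\]
where $P$ ranges over $S$-paths and $a(P) := \sum_{e \in P} a(e)$. Taking the LP dual and substituting $b(e) := y(e)+a(e)$, the dual becomes
\[
\min \sum_{e \in E} c(e) \bigl( b(e) - a(e) \bigr) \quad \text{s.t.} \quad b(e) \geq a(e), \ \sum_{e \in P} b(e) \geq \mu \ \text{for every $S$-path } P,
\]
so that an optimum of (M) equals this LP dual by strong duality (feasibility of (M) is trivial since $\lambda \equiv 0$ works).

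Second, I would show this LP dual is equivalent to (D$'$). The constraint says the shortest $S$-path under edge-length $b$ is at least $\mu$; by standard uncrossing applied to the cut-covering reformulation (exactly as in the sketch of Proposition~\ref{prop:min-max}), there is a laminar optimal solution $\pi : {\cal C} \to \RR_+$ with
\[
b(e)-a(e) = \Bigl( \sum_{X \in {\cal C} : e \in \delta X} \pi(X) - a(e) \Bigr)^{\!+}, \qquad \sum_{X \in {\cal C}_s} \pi(X) = \mu/2 \ (s \in S),
\]
the second equation being forced because the chain ${\rm supp}\,\pi \cap {\cal C}_s$ must cover every $s$-to-$t$ path together with its $t$-counterpart, and because reducing the sum below $\mu/2$ would violate the covering at some $S$-path while increasing it would be suboptimal. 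The same laminar-to-potential bijection as in the sketch of Proposition~\ref{prop:min-max} then produces a proper potential $p$ with $p_s = (\mu/2) e_s$ and $\sum_{X:e\in \delta X}\pi(X) = D(p_i,p_j)$; the objective transforms into $\sum_{ij} c(ij)(D(p_i,p_j)-a(ij))^+$, i.e.\ (D$'$). Conversely any feasible potential $p$ of (D$'$) yields a feasible $b(e) := \max(D(p_i,p_j),a(ij))$, so the two optima coincide.

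Third, for the half-integrality when $\mu \in \ZZ$, I would invoke the TDI-style half-integrality of the cut-covering LP (Theorem~\ref{thm:fuku}, which is stated for general cost vectors and hence applies to the rewritten dual above) to obtain a half-integral laminar $\pi$; since $\mu/2 \in \ZZ/2$, the chain sums $\sum_{X \in {\cal C}_s} \pi(X) = \mu/2$ are automatically compatible with $\pi$ being half-integral. The laminar-to-potential translation then gives a half-integral potential $p$ with $p_s = (\mu/2) e_s$.

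The main obstacle I anticipate is the third paragraph: establishing the half-integrality of the laminar $\pi$ with the extra equality constraint $\sum_{X \in {\cal C}_s} \pi(X) = \mu/2$, which is not literally a special case of Theorem~\ref{thm:fuku}. I would handle this either by Lagrangianizing the equality constraint (observing that a large-enough penalty reduces to an instance of (L) with modified cost on an auxiliary edge from $s$ to a fresh node) or by directly appealing to the half-integrality argument in \cite{Kar94, HHMPA} which treats exactly this setup; either way, once half-integrality of the laminar representation is in hand, the potential constructed from it is automatically half-integral.
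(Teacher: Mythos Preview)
Your approach differs from the paper's: the paper reduces to the node-capacitated framework of \cite{HHMPA} (by subdividing each edge and putting the capacity/cost on the midpoint node) and then cites the duality and half-integrality results proved there wholesale. Your route---direct path-LP duality followed by rewriting the optimal dual as a potential on ${\cal T}$---is more self-contained and does work, but your second paragraph mislabels the key step. The LP dual you wrote has edge variables $b$, not cut variables $\pi$, so there is no ``cut-covering reformulation'' to uncross. What you actually need is a direct potential construction: given optimal $b$, set $\rho(i):=\min_{s\in S} d_b(s,i)$, let $s(i)$ be an attaining terminal, and define $p_i:=(\mu/2-\rho(i))^+ e_{s(i)}$. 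A short case check using the triangle inequality for $d_b$ and the constraint $d_b(s,t)\ge\mu$ gives $D(p_i,p_j)\le b(ij)$, hence $(D(p_i,p_j)-a(ij))^+\le b(ij)-a(ij)$, so the (D$'$) value at $p$ is at most the LP-dual value; the reverse inequality follows from your observation that $b(ij):=\max(D(p_i,p_j),a(ij))$ is dual-feasible. Once you have this, the laminar $\pi$ is recovered from the level sets of $p$ exactly as in Proposition~\ref{prop:min-max}, not the other way around.

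For the half-integrality, you correctly flag the obstacle: Theorem~\ref{thm:fuku} does not literally cover the equality $\sum_{X\in{\cal C}_s}\pi(X)=\mu/2$. Your fallback of citing \cite{Kar94,HHMPA} is precisely what the paper does (it invokes \cite[Remark~4.7]{HHMPA}). The Lagrangian/auxiliary-edge reduction you sketch can be made to work too, but at that point you are essentially re-deriving the node-capacitated result, so there is no real gain over the paper's citation.
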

\begin{proof}[Sketch of proof]
The edge-capacitated formulation is transformed to 
the node-capacitated formulation, discussed in \cite{HHMPA}, as follows.
Replace each edge $e = ij$ by the series of two edges $iu$ and $uj$.
Define the node-capacity and the node-cost of the new node $u$ by $c(e)$ and $a(e)$.
No edge-capacity and edge-cost are given. The node-capacities of the original nodes are $\infty$.
Then (M) becomes a node-capacitated problem, 
and the results in \cite[Section 4]{HHMPA} are applicable.
In particular, the dual of (M) is given by the problem (4.6) of \cite{HHMPA} 
in setting $\bar \mGamma := {\cal T}$ and $\bar R_s := \{ \mu e_s /2\}$.
Subtree $F(i)$ for the original node $i$ is a single point $p_i$ 
(by $b(x) = \infty$), 
and hence $F(u)$ for new node $u$ replacing original edge $ij$ 
is a path between $p_i$ and $p_j$ with length (diameter) $D(p_i, p_j)$.
Thus (4.6) of \cite{HHMPA} becomes (D$'$). 
The half-integrality follows from \cite[Remark 4.7]{HHMPA}.
\end{proof}
By the same argument in the proof of Lemma~\ref{lem:optimality}, 
one can prove that a multiflow $f$ and a potential $p$ are both optimal 
if and only if they satisfy conditions (1), (2), and (3) in Lemma~\ref{lem:optimality}. 
The corresponding double covering network ${\cal D}'_p$ is obtained by 
replacing the lower bound and the upper bound capacities of $s^-s^+$ of ${\cal D}_p$ 
by $0$ and $\infty$, respectively.
Then we obtain an analogue of Proposition~\ref{prop:f_phi} that 
$p$ is optimal if and only if there exists a feasible circulation $\phi$ in ${\cal D}'_p$, 
and $f_{\phi}$ is an optimal multiflow in (M).
We may consider that the variables of (D$'$) are $p_i$ 
for non-terminal nodes $i  \in V \setminus S$ 
(since a potential $p_s$ of each terminal $s$ is fixed to $\mu e_s /2$ in (D$'$)).  
For non-terminal node $i$, 
define $f_i: {\cal T} \to \RR_+$ by 
$f_i (q) := \sum_{s \in S: si \in E} c(si) D(\mu e_s/2, q)$.
We may assume that the set of non-terminal nodes are $\{1,2,\ldots,n-k\}$.
Then (D$'$) is the minimization of 
$\omega (p) := \sum_{1 \leq i \leq n-k} f_i( p_i) 
+ \sum_{ij \in E: 1 \leq i,j \leq n-k} c(ij) (D(p_i, p_j)- a(ij))^+$.
Again it is easy to see that there is an optimal potential $p$ 
with $D(0,p_i) \leq \mu/2$.
For $\sigma = -1, 0,1,2,\ldots$, define $T_{\sigma}$, $g_{ij,\sigma}$, 
$\bar g_{ij,\sigma}$, $\omega_{\sigma}$, and $\bar \omega_{\sigma}$ as above.
Then Lemma~\ref{lem:omega} holds in this setting.
Let $L := \lceil \log \mu \rceil$.
By the proximity scaling algorithm,
we can minimize $\omega'$ in $O((n-k) \lceil \log \mu \rceil {\rm MF}(kn, km))$ time.

It is shown in \cite{GK97, Kar94} 
that if $\mu \geq 2A_1C + 1$ for $A_1 := \sum_{e \in E} a(e)$ 
and $C = \sum_{e \in E} c(e)$, 
then every half-integral optimal multiflow in (M)
is a minimum cost multiflow.
Also it is shown in \cite{GK97, Kar94} 
that cost $a$ is replaced by $a'$ (defined by (\ref{eqn:a'})) 
to satisfy the cost positivity.
Any half-integral optimal multiflow in (M) with cost $a'$ 
is optimal for original cost $a$.
Thus, letting $\mu := 2 A'_1C + 1 = O(A_1 C)$, 
we obtain a minimum cost half-integral multiflow 
in $O( (n-k) \log (A_1 C ) {\rm MF} (kn, km))$ time. 

\subsection{Additional results and remarks}\label{subsec:additional}
\subsubsection{Lov\'asz-Cherkassky formula, $k$-submodular function, and multiway cut}
\label{subsub:LC}

We note a relation
between Lov\'asz-Cherkassky formula (Theorem~\ref{thm:LC}), 
$k$-submodular function minimization, and multiway cut.
Suppose that $S$ consists of $k$ terminals, and 
the set of non-terminal nodes is $\{1,2,\ldots,n\}$.
Recall notions in Section~\ref{subsec:k-submodular}. 
Adding $0$ to $S$, we obtain poset $S_k$, and 
consider the following $k$-submodular function minimization:
\begin{eqnarray}\label{eqn:LS}
\mbox{Min.}  && \frac{1}{2} \sum_{1 \leq i \leq n} \sum_{s \in S: si \in E} 
c(si) \delta (s, p_i) + \frac{1}{2} \sum_{ ij \in E: 1 \leq i,j \leq n} c(ij) \delta (p_i, p_j) 
 \\
\mbox{s.t.} && p = (p_1,p_2, \ldots,p_n) \in {S_k}^n.\nonumber
\end{eqnarray}
Recall Example~\ref{rem:multiway} with $d = \delta$
that this problem is nothing but 
a $k$-submodular relaxation (or an L-convex relaxation) 
of multiway cut.

Furthermore this problem is also a dual of maximum free multiflow problem (MF),
and hence the optimal value of this problem is equal to $\sum_{s \in S} \kappa_s/2$.
To see this, for $p \in {S_k}^n$ and $s \in S$, let 
$X^p_s := \{ s\} \cup \{i \mid p_i = s\}$.
Then $X^p_s$ is an $(s, S \setminus \{s\})$-cut. 
Observe $\sum_{s \in S} c(\delta X^{p}_s)/2$ 
is equal to the objective value of (\ref{eqn:LS}) at $p$.
Conversely, take a minimum $(s, S \setminus \{s\})$-cut $X_s$ for each $s \in S$.
We can assume that $X_s$ $(s \in S)$ are disjoint. 
If $X_s \cap X_t \neq \emptyset$, then, by submodularity, 
we can replace 
$X_s$ by $X_s \setminus X_t$ and $X_t$ by $X_t \setminus X_s$ 
without increasing the cut capacity.
Define $p$ by $p_i = s$ if $i \in X_s$ for some $s \in S$ and $p_i = 0$ otherwise.  
Then the objective value at $p$ is equal to 
$\sum_{s \in S} c(\delta X_s)/2 = \sum_{s \in S} \kappa_s/2$.
In particular, this $k$-submodular function minimization can be solved 
by $(s, S\setminus\{s\})$-mincut computation for each $s \in S$.

\subsubsection{Application to terminal backup problem}

The linear program (L) arises as an LP-relaxation 
of a class of network design problems, 
called {\em terminal backup problems}~\cite{AK11, XAC08INFOCOM}.
Given a graph $G = (V,E)$ with terminal set $S$ and edge-cost $a: E \to \ZZ_+$ 
the terminal backup problem asks to find
a minimum cost subgraph $F$ with the property 
that each terminal $s$ is reachable to other terminal in $F$.
Anshelevich and Karagiozova~\cite{AK11} proved that this problem is solvable 
in polynomial time.
Bern{\'a}th, Kobayashi, and Matsuoka~\cite{BKM13} considered the following weighted generalization.
Given a graph $G = (V,E)$ with 
terminal set $S$, edge-cost $a: E \to \ZZ_+$, and a requirement $r: S \to \ZZ_+$, 
find a minimum cost integral edge-capacity $x: E \to \ZZ_+$ 
such that for each terminal $s$ there is an integral $(s, S \setminus \{s\})$-flow in $(V,E,x)$
with flow-value at least $r(s)$. 
They proved that this generalization is solvable in (strongly) polynomial time, 
and asked whether a natural capacitated version of this problem 
is tractable or not.
The capacitated version is 
to impose the condition $x(e) \leq c(e)$ $(e \in E)$ 
for $c: E \to \ZZ_+$, and is formulated as 
the following integer program:
\begin{eqnarray*}
\mbox{(CTB)}\quad  {\rm Min.} &&  \sum_{e \in E} a(e) x(e) \nonumber \\
{\rm s. t. } &&  x(\delta X) \geq r(s) \quad (s \in S, X \in {\cal C}_s), \nonumber \\
&& x(e) \in \{0,1,2,\ldots, c(e)\} \quad (e \in E).
\end{eqnarray*}
The problem (L) is nothing but a natural LP-relaxation of (CTB). 

Fukunaga~\cite{Fukunaga14} studied (CTB), and proved the half-integrality 
(Theorem~\ref{thm:fuku}) of the LP-relaxation (L).  
As was noted by him,  
a $2$-approximation solution is immediately obtained 
from a half-integral optimal solution $x$ in (L) by rounding 
each non-integral component $x(e)$ to $x(e) +1/2$.
He devised a clever rounding algorithm to obtain a $4/3$-approximation solution.
Our algorithm gives a practical and combinatorial implementation of 
his $4/3$-approximation algorithm, as follows.
A half-integral optimal solution $x$ of (L) and an optimal potential $p$ of (D) are obtained by our combinatorial algorithm.
Fukunaga's algorithm rounds a special half-integral optimal solution $\tilde x$ 
obtained from $x$ by the following fixing procedure. 
Let $E_1$ be the set of edges $e$ with $x(e) \in \ZZ$.
Let $\tilde x (e):= x(e)$ for $e \in E_1$.
For an edge $e\in E \setminus E_1$ (with non-integral $x(e)$), 
check whether there is an optimal solution $y$ in (L)
such that $y(e) \in \{\lfloor x(e) \rfloor, \lceil x(e) \rceil\}$, 
$\lfloor x(e') \rfloor \leq y(e') \leq  \lceil x(e') \rceil$ for $e' \in E \setminus (E_1 \cup \{e\})$, 
and $y(e') = \tilde x(e')$ for $e' \in E_1$.
If such $y$ exists, then let $\tilde x(e) := y(e)$.
Otherwise let $\tilde x(e) := x(e)$. 
Add $e$ to $E_1$, and repeat until $E_1 = E$ to obtain $\tilde x$.
This procedure can be implemented on 
the double covering network ${\cal D}_p$.
By Lemma~\ref{lem:NvsL} with (CP), 
$y$ is optimal to (L)  if and only if 
$y$ is the flow-support of some optimal multiflow $f$ in (N).
From view of (the proof of) Proposition~\ref{prop:f_phi},
$y$ is optimal if and only if there is a circulation $\phi$ of ${\cal D}_p$ 
with $y(ij) = \phi(\vec e)$ for $ij \in E, \vec e \in A_{ij}$.
Therefore the above procedure 
reduces to checking the existence of a circulation 
in ${\cal D}_p$ with changing the lower and upper capacities
of edges in $A_{ij}$ to $\lfloor x(e) \rfloor$ or $\lceil x(e) \rceil$ appropriately.
Thus $\tilde x$ is obtained 
by at most $m$ max-flow computations on ${\cal D}_p$.

\subsubsection{Simple descent algorithm by double covering network}\label{subsub:simple}
We here present a simple and instructive but pseudo-polynomial time algorithm 
solving (N) and (D) of the following description:
\begin{quote}
For a potential $p$, 
find a feasible circulation $\phi$ in ${\cal D}_p$.
If $\phi$ exists, then $f_{\phi}$ is an optimal multiflow, and stop.
Otherwise, from an infeasibility certificate of ${\cal D}_p$, 
we obtain another potential $p'$ with 
$\omega(p') < \omega(p)$. Let $p \rightarrow p'$ and repeat.
\end{quote}
The presented algorithm can always keep $p$ half-integral, 
providing an algorithmic proof 
of Proposition~\ref{prop:min-max}.

Assume the cost positivity (CP).
For a (proper) half-integral potential $p$, construct 
the double covering network ${\cal D}_p$, as above.
We reduce the circulation problem on ${\cal D}_p$ to the maximum flow problem 
on a modified network  $\tilde {\cal D}_{p}$.
Consider supper source $a^{+}$ and sink $a^{-}$.
For each $ij \in E_{>0}$
modify edge set $A_{ij}$ as follows. 
For each $uv \in A_{ij}$
replace $uv$ by two edges 
$ua^{-}, a^{+} v$ with (upper-)capacity $c(ij)$ (and lower-capacity $0$).
%The modified edge set is denoted by $A_{ij}$.
%
For each terminal $s \in S$, 
add new two edges $s^- a^+$ and $s^+ a^{-}$ with capacity $r(s)$.
For edge $s^{-}s^{+}$,  
change the lower-capacity to $0$ and the upper-capacity 
to $\infty$ if $p_s = 0$ and to $0$ otherwise.
The resulting network is denoted by $\tilde {\cal D}_p$.
Consider the maximum $(a^+,a^-)$-flow problem on the new network $\tilde {\cal D}_{p}$.
This is a standard reduction of a circulation problem to a max-flow problem.
In particular,  ${\cal D}_p$ has a feasible circulation if and only if 
a maximum $(a^+, a^-)$-flow $\tilde {\cal D}_p$ 
saturates all edges leaving $a^{+}$ (entering $a^-$), i.e.,
$\{a^+\}$ is a minimum $(a^+,a^-)$-cut.

%This max-flow problem is decomposed to 
%two max-flow problems as follows.
%
Let  $\tilde V_1$ (resp. $\tilde V_{2}$) be 
the set of nodes $i^+$, $i^-$, $i^{s+}$, or $i^{s-}$
such that $i$ has an integral potential $p_i$ (resp. non-integral potential $p_i$).
By the integrality of $a(ij)$, 
there is no edge $ij$ in $E_{=}$ such that 
$i$ has an integral potential and
$j$ has a non-integral potential.
Hence there is no edge connecting between $\tilde V_{1}$ and $\tilde V_{2}$.

An $(a^+, a^{-})$-cut $X$ in $\tilde {\cal D}_p$ is called {\em legal} 
if 
\begin{itemize}
\item[(1)] $X \cap \tilde V_1$ or $X \cap \tilde V_2$ is empty,
\item[(2)] for each $i \in U_0$, $X \cap B_{i}$ is empty or 
$\{ i^{s+}\} \cup \{i^{t-} \mid t \in S \setminus \{s\}\}$ for some $s \in S$, and
\item[(3)] for other node $i$,   $X \cap \{i^+,i^-\}$ is empty, $\{i^+\}$, or $\{i^-\}$. 
\end{itemize}
For a legal cut $X$ of $\tilde D_{p}$, 
the potential $p^X$ is defined by: 
\begin{equation}
p^X_i := \left\{ \begin{array}{ll}
e_s/2 & {\rm if}\  \mbox{$i \in U_0$ and $i^{s+} \in X$ for $s \in S$},\\
p_i + e_s/2 & {\rm if}\ \mbox{$i^{+} \in X$ and $i \in U_s$ for $s \in S$}, \\ 
p_i - e_s/2 & {\rm if}\ \mbox{$i^{-} \in X$ and $i \in U_s$ for $s \in S$},\\
p_i & {\rm otherwise},
\end{array} \right. \quad (i \in V).
\end{equation}
Then the following lemma holds; the proofs are given in the end of this section.
\begin{Lem}\label{lem:legal}
\begin{itemize}
\item[{\rm (1)}]
For a legal cut $X$ in $\tilde {\cal D}_p$, we have
\[
\omega(p^X) - \omega(p) = \frac{1}{2} \{ c(\delta X) - c(\delta \{a^{+}\}) \}.
\]
\item[{\rm (2)}]
Let $X$ be a (unique) inclusion-minimal minimum $(a^+,a^-)$-cut in $\tilde D_p$, 
and let $X_1 := X \setminus \tilde V_2$ and $X_2:= X \setminus \tilde V_1$.  
Then both $X_1$ and $X_2$ are legal cuts with
\[
c(\delta X) = c(\delta X_1) + c(\delta X_2) - c( \delta \{a^+\}).
\]
In particular, if $c(\delta X) < c(\delta \{a^+\})$, 
then  $c(\delta X_1) < c(\delta \{a^+\})$ or  $c(\delta X_2) < c(\delta \{a^+\})$.
\end{itemize}
\end{Lem}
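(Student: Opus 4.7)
The plan is to prove part (1) by a direct per-edge and per-terminal expansion, and then derive part (2) by a decomposition argument combined with a symmetry exploiting the uniqueness of the inclusion-minimal mincut.

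For part (1), I would fix a legal cut $X$ (with $a^+ \in X$ and $a^- \notin X$) and note that conditions (2)--(3) in the legal-cut definition ensure $p^X$ is a well-defined proper potential. Both sides of the identity split into contributions that can be matched group by group. On the objective side, $\omega(p^X) - \omega(p)$ splits as a sum over edges $ij \in E$ (each contributing $c(ij)[(D(p^X_i,p^X_j)-a(ij))^+ - (D(p_i,p_j)-a(ij))^+]$) and over terminals $s \in S$ (each contributing $-r(s)[D(0,p^X_s)-D(0,p_s)]$); on the cut side, $c(\delta X) - c(\delta\{a^+\})$ splits analogously over the two edges of $A_{ij}$ (routed through $a^{\pm}$ when $ij \in E_{>}$) and over the edges $s^-s^+$, $s^-a^+$, $s^+a^-$. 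For each group I would verify the identity (with the factor $1/2$) by case analysis on the three types of $A_{ij}$ (same $U_s$; different $U_s, U_t$; one endpoint in $U_0$) crossed with the patterns of $X$ at the endpoints permitted by legality. The underlying numerical fact is that shifting $p_i$ by $\pm e_s/2$ changes each incident distance $D(p_i, p_j)$ by $\pm 1/2$ in a manner mirrored by the orientation of the corresponding two edges of $A_{ij}$; the terminal sums align similarly.

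For part (2), I would first show that no edge of $\tilde {\cal D}_{p}$ joins $\tilde V_1$ to $\tilde V_2$ directly: any such edge would come from some $ij \in E_= \cup E_>$ with one endpoint of integral and the other of properly half-integral potential, forcing $D(p_i,p_j) \in \ZZ + 1/2$, incompatible with $D(p_i, p_j) = a(ij) \in \ZZ_+$ on $E_=$ and with the integral capacities used on $E_>$. Hence $\tilde {\cal D}_{p}$ decomposes over $\tilde V_1$ and $\tilde V_2$ with only $a^+, a^-$ shared, and direct bookkeeping (edges within $\tilde V_1$, within $\tilde V_2$, from $a^+$ to each side, and into $a^-$ from each side) gives $c(\delta X_1) + c(\delta X_2) = c(\delta X) + c(\delta\{a^+\})$: the only edges double-counted in $\delta X_1 \cup \delta X_2$ are those leaving $a^+$. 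For the legality of $X_1$ and $X_2$, condition (1) is immediate from their definitions; for (3) and (2), I would invoke the natural involution $\sigma$ on $\tilde {\cal D}_{p}$ that swaps $+\leftrightarrow -$ (combined with reversing all edges), which is a capacity-preserving isomorphism from $\tilde {\cal D}_{p}$ to its reverse. Consequently $\sigma(V \setminus X)$ is also a minimum $(a^+,a^-)$-cut, and by uniqueness of the inclusion-minimal mincut $X = \sigma(V \setminus X)$; this self-duality forces $|X \cap \{i^+, i^-\}| = 1$ for every $i \notin U_0$, yielding (3). Combined with the fact that no edge of $B_i$ can leave $X$ (such an edge would contribute infinite capacity), an analogous analysis at $i \in U_0$ yields (2). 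The ``in particular'' statement is then immediate from the capacity equation: if both $c(\delta X_j) \geq c(\delta\{a^+\})$ then $c(\delta X) \geq c(\delta\{a^+\})$.

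The main obstacle is the case analysis in part (1): although each subcase is elementary, the cross-product of three $A_{ij}$ types with the admissible $X$-patterns generates many combinations, and the sign conventions for the $\pm 1/2$ distance changes must be tracked carefully. The symmetry argument for legality in part (2) is conceptually clean but requires verifying that $\sigma$ is indeed an automorphism (up to edge reversal) of every edge type in the construction, which is routine but not automatic.
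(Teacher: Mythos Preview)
Your plan for part~(1) is the same as the paper's: both reduce to a per-edge and per-terminal case analysis, matching each $A_{ij}$-contribution and each terminal contribution to the corresponding change in $\omega$.

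For part~(2), the decomposition equality is handled the same way as in the paper. Your involution idea for legality is different from the paper's direct node-removal argument and is attractive, but the execution has a gap. From the anti-automorphism $\sigma$ you correctly get that $\sigma(V\setminus X)$ is again a minimum $(a^+,a^-)$-cut; however, the map $Y\mapsto\sigma(V\setminus Y)$ is inclusion-\emph{reversing} on cuts, so it sends the inclusion-\emph{minimal} mincut $X$ to the inclusion-\emph{maximal} mincut. Uniqueness of the minimal mincut therefore gives only $X\subseteq\sigma(V\setminus X)$, not equality. Fortunately this inclusion is exactly what condition~(3) needs: $i^+\in X$ implies $i^-=\sigma(i^+)\in V\setminus X$, so $|X\cap\{i^+,i^-\}|\le 1$. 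Your stated conclusion $|X\cap\{i^+,i^-\}|=1$ is too strong and fails whenever the mincut is not unique.

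For condition~(2) at $i\in U_0$, the inclusion $X\subseteq\sigma(V\setminus X)$ together with the infinite capacity of $B_i$-edges handles the case where some $i^{s+}$ lies in $X$, but it does not rule out the configuration in which no $i^{s+}$ lies in $X$ while some $i^{t-}$ does. For that you still need a minimality step: every edge entering $i^{t-}$ comes from $B_i$, hence from outside $X$, so deleting $i^{t-}$ from $X$ does not increase the cut capacity, contradicting inclusion-minimality. This is essentially the paper's argument; your involution shortcut cleans up condition~(3) but does not fully replace the removal argument for condition~(2).

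One minor correction: your reason that no edge joins $\tilde V_1$ to $\tilde V_2$ is right for $E_{=}$, but for $E_{>}$ the point is not ``integral capacities'' --- it is that in $\tilde{\cal D}_p$ each edge of $A_{ij}$ with $ij\in E_{>}$ has been rerouted through $a^{\pm}$, so it never connects $\tilde V_1$ and $\tilde V_2$ directly.
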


Therefore we can check the optimality of $p$ by solving 
the maximum-flow problem on $\tilde {\cal D}_p$. 
If $p$ is not optimal, then we obtain another half-integral potential $p^X$ 
having a smaller objective value. This naturally provides the following algorithm:
\begin{description}
\item[Descent algorithm by double covering network]
\item[Step 0:] Replace $a$ by $a'$ defined by (\ref{eqn:a'}) if $a$ is not positive. 
 Let $p := (0,0,\ldots,0)$.
\item[Step 1:] Construct $\tilde D_{p}$, and 
find a minimal minimum $(a^+,a^-)$-cut $X$ and a maximum $(a^+,a^-)$-flow $\tilde \phi$.
\item[Step 2:] If $X = \{a^+\}$, then $p$ is optimal, and
construct a feasible circulation $\phi$ on ${\cal D}_p$ from $\tilde \phi$ and an optimal multiflow $f_{\phi}$ from $\phi$; stop. 
Otherwise go to step 3.
\item[Step 3:] Let $X_1 := X  \setminus \tilde V_2$ and $X_2 := X  \setminus \tilde V_1$.
Choose $j \in \{1,2\}$ with $c(\delta X_j) < c(\delta \{a^+\})$. Let $p := p^{X_j}$ and go to step 1.
\end{description}
Observe that this algorithm coincides with the steepest descent algorithm applied to 
L-convex function $\omega_{-1}$ on ${T_{-1}}^n$, where 
$p^{X_j}$ is a steepest direction of ${\cal I}(p)$ 
for even iterations and of ${\cal F}(p)$ for odd iterations. 
Thus, by Theorem~\ref{thm:bound} and Lemma~\ref{lem:region}, 
the number of the iteration is $O(n AC )$.
The numbers of nodes and edges of $\tilde {\cal D}_p$ are 
$O(kn)$ and $O(m+ k^2n)$, respectively.
Thus we have:
\begin{Thm}
The above algorithm runs in $O(n AC\, {\rm MF}( kn, m + k^2 n ))$ time.
\end{Thm}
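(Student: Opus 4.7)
The plan is to decompose the running time as (number of outer iterations) $\times$ (cost of one max-flow on $\tilde{\cal D}_p$), and to bound each factor separately by invoking the results already established.

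First I would formalize the observation, made in the paragraph preceding the theorem, that the descent algorithm by the double covering network is precisely the steepest descent algorithm of Section~\ref{subsec:alternating} applied to the L-convex function $\omega_{-1}$ on ${T_{-1}}^n$, started from $p^0 := 0$. Indeed, by Lemma~\ref{lem:omega}(1), $\omega_{-1}$ is L-convex on ${T_{-1}}^n$, and by Lemma~\ref{lem:legal} the legal cuts of $\tilde{\cal D}_p$ that strictly decrease the minimum cut capacity correspond exactly to the vertices of ${\cal I}(p) \cup {\cal F}(p)$ that strictly decrease $\omega_{-1}(p)$; a minimum $(a^+,a^-)$-cut therefore supplies a steepest move. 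Because the initial point $p^0 = 0$ lies in ${B_{-1}}^n$ and black vertices are maximal in $\preceq$, the hypothesis $\omega_{-1}(p^0) = \min_{y \in {\cal F}(p^0)} \omega_{-1}(y)$ of Theorem~\ref{thm:bound} is satisfied vacuously.

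Next I would bound the number of iterations. By Theorem~\ref{thm:bound}, this number equals $d_{-1}(\opt(\omega_{-1}), p^0) = d_{-1}(\opt(\omega_{-1}), 0)$. By Lemma~\ref{lem:region} applied to the (positive) cost function $a'$ produced in Step~0, there exists a proper half-integral optimal potential $p^*$ with $D(0, p^*_i) \leq n A'$ for every $i$, where $A' := \max_{e} a'(e) = O(AC)$ by the definition~(\ref{eqn:a'}) of $a'$. Using the relation $d_{-1} = 2 D$ componentwise and the $l_\infty$ nature of the metric on ${T_{-1}}^n$, this yields $d_{-1}(0, p^*) \leq 2 n A' = O(nAC)$, so the outer loop runs $O(nAC)$ times.

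Then I would account for the cost of a single iteration, which is dominated by the max-flow computation on $\tilde{\cal D}_p$. Counting vertices: each terminal contributes the pair $s^\pm$, each non-terminal $i \notin U_0$ contributes $i^\pm$, and each $i \in U_0$ contributes the $2k$ copies $i^{s\pm}$ $(s \in S)$, plus the two auxiliary nodes $a^\pm$; the total is $O(kn)$. Counting edges: each original edge $ij \in E_= \cup E_>$ contributes $O(1)$ edges in $A_{ij}$ (at most doubled by the reduction to an $(a^+,a^-)$-flow), each $i \in U_0$ contributes the $k(k-1)$ edges of $B_i$, and the terminal gadgets contribute $O(k)$ edges; the total is $O(m + k^2 n)$. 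Hence one iteration takes ${\rm MF}(kn, m + k^2 n)$ time, and the construction of $\phi$ and of the multiflow $f_\phi$ in Step~2 (a cycle decomposition of a feasible circulation) is absorbed into this bound. Multiplying the two factors gives $O(n A C\, {\rm MF}(kn, m + k^2 n))$, as claimed. The only nontrivial step is confirming that $p^0 \in {B_{-1}}^n$ activates the exact (rather than $+2$) variant of Theorem~\ref{thm:bound}; the verification is immediate from the fact that ${\cal F}(p^0) = \{p^0\}$ when $p^0 \in B^n$.
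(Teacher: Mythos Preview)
Your proposal is correct and follows essentially the same approach as the paper: identify the descent algorithm with the steepest descent for $\omega_{-1}$, bound the iteration count by Theorem~\ref{thm:bound} together with Lemma~\ref{lem:region} (noting the cost blowup $A \to A' = O(AC)$ from Step~0), and count nodes and edges of $\tilde{\cal D}_p$ directly. Your explicit check that $p^0 = 0 \in {B_{-1}}^n$ activates the exact form of Theorem~\ref{thm:bound} is a nice addition the paper leaves implicit.
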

\begin{Rem}
In the above algorithm, 
each step minimizes a sum of 
basic ${\pmb k}$-submodular functions of type I and III (thanks to (CP)).
The above network $\tilde {\cal D}_p$ may be viewed as 
yet another representation of ${\pmb k}$-submodular functions.
In fact, an arbitrary sum of basic ${\pmb k}$-submodular functions of type I and III 
admits this kind of a network representation 
(Yuta Ishii, Master Thesis, University of Tokyo, 2014).
However this representation seems not to capture basic ${\pmb k}$-submodular functions of type II. 
In each scaling phase of the proximity scaling algorithm, 
the objective functions of local problems may contain a ${\pmb k}$-submodular term of type II. 
This is why we need an algorithm in Theorem~\ref{thm:IWY14}.
\end{Rem}

\begin{proof}[Proof of Lemma~\ref{lem:legal}]
(1).  Let $X$ be a legal cut (with finite cut capacity).
Observe that
\begin{equation}
c(\delta X) = \sum_{ij \in E_{=} \cup E_{>}} c(ij) | \delta X \cap A_{ij}|
+ \sum_{s \in S} r(s) |\delta X \cap \{ a^+ s^+, s^-a^- \}|.
\end{equation}
In particular we have
\begin{equation}
c(\delta \{a^+\}) = \sum_{ij \in E_{>}} 2 c(ij) + \sum_{s \in S} r(s).
\end{equation}
Let $p' := p^X$. 
For an edge $ij \in E$,
if $p_i$ and $p_j$ are integral and non-integral potentials, respectively, 
then $ij \not \in E_{=}$ and $D(p_i,p_j) -1/2 \leq D(p'_i, p'_j) \leq D(p_i,p_j) +1/2$.
Therefore $a(ij) > D(p_i,p_j)$ implies $a(ij) \geq D(p'_i,p'_j)$ and
$a(ij) < D(p_i,p_j)$ implies $a(ij) \leq  D(p'_i,p'_j)$.
Consequently we have
\begin{eqnarray*}
\omega (p')-  \omega (p) &= &\sum_{ij \in E_{>}} c(ij) (D(p'_i, p'_j)- D(p_i, p_j) ) + \sum_{ij \in E_{=}} c(ij) (D(p'_i, p'_j) - a(ij))^+ \\
&& - \sum_{s \in S} r(s) ( D(0, p'_s) - D(0, p_s) ). \\
& = & \sum_{ij \in E_{>}} c(ij) ( D(p'_i, p'_j)- D(p_i, p_j) + 1) + \sum_{ij \in E_{=}} c(ij) (D(p'_i, p'_j) - a(ij))^+ \\
&& + \sum_{s \in S} r(s) ( D(0, p_s) - D(0, p'_s) + 1/2) -  c( \delta \{a^+\})/2.
\end{eqnarray*}
It suffices to show that
\begin{eqnarray}
 |\delta X \cap A_{ij}|/2 & = & D(p'_i, p'_j)- D(p_i, p_j) + 1 \quad (ij \in E_{>}), \label{eqn:1} \\
 |\delta X \cap A_{ij}|/2  & = & (D(p'_i, p'_j) - a(ij))^+  \quad (ij \in E_=), \label{eqn:2}\\
|\delta X \cap \{ a^+s^+, s^- a^- \}|/2 & = & D(0, p_s) - D(0, p'_s) + 1/2 \quad (s \in S). \label{eqn:3}
\end{eqnarray}
Pick $ij \in E_{=} \cup E_{>}$.

Case 1: $i \in U_s$, $j \in U_s \cup U_0$ and $D(0,p_i) > D(0,p_j)$. 
If $j \in U_0$, then $j^{s\pm}$ is denoted by $j^{\pm}$.
Suppose $ij \in E_{=}$ (to show (\ref{eqn:2})).
If $\{i^+,i^-,j^+,j^-\} \cap X$ is empty or 
contains $i^-$ or $j^+$, 
then $\delta X \cap A_{ij}$ is empty, 
$D(p'_i,p'_j) - D(p_i,p_j) \leq 0$ and hence $(D(p'_i,p'_j) - a(ij))^+ = 0$.
If  $\{i^+,i^-,j^+,j^-\} \cap X = \{i^+\}$ or $\{j^-\}$, 
then $|\delta X \cap A_{ij}| = 1$, 
$D(p'_i,p'_j) - D(p_i,p_j) = 1/2$, and  $(D(p'_i,p'_j) - a(ij))^+ = 1/2$.
If $\{i^+,i^-,j^+,j^-\} \cap X = \{i^+,j^-\}$, 
then $|\delta X \cap A_{ij}| = 2$, 
$D(p'_i,p'_j) - D(p_i,p_j) = 1$, and $(D(p'_i,p'_j) - a(ij))^+ = 1$.
Suppose $ij \in E_{>}$ (to show (\ref{eqn:1})).
If $\{i^+,i^-,j^+,j^-\} \cap X = \{i^+,j^+\}$, $\{i^-,j^-\}$ or empty, 
then  $|\delta X \cap A_{ij}| = 2$, 
$D(p'_i,p'_j) = D(p_i,p_j)$, and $D(p'_i,p'_j) - D(p_i,p_j) +1 = 1$.
If $\{i^+,i^-,j^+,j^-\} \cap X = \{i^+\}$ or $\{j^-\}$, 
then $|\delta X \cap A_{ij}| = 3$, 
$D(p'_i,p'_j) = D(p_i,p_j) + 1/2$, and  $D(p'_i,p'_j) - D(p_i,p_j) +1 = 3/2$.
If $\{i^+,i^-,j^+,j^-\} \cap X = \{i^-\}$ or $\{j^+\}$, 
then $|\delta X \cap A_{ij}| = 1$,
$D(p'_i,p'_j) = D(p_i,p_j) - 1/2$, and  $D(p'_i,p'_j) - D(p_i,p_j) +1 = 1/2$.
If $\{i^+,i^-,j^+,j^-\} \cap X = \{i^+, j^-\}$, 
then $|\delta X \cap A_{ij}| = 4$,
$D(p'_i,p'_j) = D(p_i,p_j) + 1$, and  $D(p'_i,p'_j) - D(p_i,p_j) +1 = 2$.
If $\{i^+,i^-,j^+,j^-\} \cap X = \{i^-, j^+\}$, 
then $|\delta X \cap A_{ij}| = 0$,
$D(p'_i,p'_j) = D(p_i,p_j) - 1$, and  $D(p'_i,p'_j) - D(p_i,p_j) +1 = 0$.

Case 2: $i \in U_s$ and $j \in U_{s'}$.
In this case, (\ref{eqn:1}) and (\ref{eqn:2}) 
are obtained by replacing roles of $j^+$  and 
$j^-$ in the above case 1.

Next consider terminal $s \in S$ (to show (\ref{eqn:3})).
If $X \cap \{s^+,s^-\} = \{s^+\}$, 
then $D(0,p'_s) = D(0,p_s) + 1/2$ and 
$\delta X \cap \{a^+s^+, s^-a^-\}$ is empty.
If $X \cap \{s^+,s^-\} = \{s^-\}$, 
then $D(0,p'_s) = D(0,p_s) - 1/2$ and 
$|\delta X \cap \{a^+s^+, s^-a^-\}| = 2$.
If $X \cap \{s^+,s^-\}$ is empty, 
then $D(0,p'_s) = D(0,p_s)$ and 
$|\delta X \cap \{a^+s^+, s^-a^-\}| = 1$.
For all cases,  (\ref{eqn:3}) holds.

(2). 
The equality $c(\delta X) = c(\delta X_1) + c(\delta X_2) - c( \delta \{a^+\})$
follows from the fact that there is no edge between $\tilde V_1$ and $\tilde V_2$.
So it suffices to show that minimal minimum $(a^+,a^-)$-cut $X$ 
satisfies the conditions (2) and (3) of legal cuts.

Suppose (for contradiction) that $X$ contains 
$\{i^+,i^-\}$ or $\{i^{s+}, i^{s-}\}$.
Remove all such pairs of nodes 
from $X$ to obtain another $(a^+,a^-)$-cut $X'$.
Observe that $|A_{ij} \cap \delta X| \geq |A_{ij} \cap \delta X'|$ for each $ij \in E_{=} \cup E_{>}$, 
and $|\{a^+s^+, s^-a^-\} \cap \delta X| \geq |\{a^+s^+, s^-a^-\} \cap \delta X'|$ 
for each terminal $s$.
Moreover $\delta X'$ does not contain edge $i^{s +}i^{s'-}$ (of infinite capacity).
Otherwise $i^{s+}, i^{s'-}, i^{s'+} \in X \not \ni i^{s-}$, and 
$\delta X$ has edge $i^{s'+}i^{s-}$ of infinite capacity; a contradiction.
Thus $X'$ is also a minimum cut, contradicting the minimality of $X$.
Thus $\delta X$ cannot contain both $i^{+}$ ($i^{s+}$) and $i^{-}$ ($i^{s-}$).
Suppose for contradiction that
$X$ contains $i^{s-}$ and does not contain $i^{s'+}$ for each $s' \in S \setminus \{s\}$.
In this case, remove $i^{s-}$ from $X$. 
Then the cut capacity does not increase, contradicting the minimality of $X$.
Hence $X$ satisfies (2) and (3), as required.
\end{proof}

\section*{Acknowledgments}
The author thanks Takuro Fukunaga for communicating \cite{Fukunaga14}, 
Vladimir Kolmogorov 
for helpful comments on the earlier version of this paper, 
and the referee for helpful comments.
The work was partially supported by JSPS KAKENHI Grant Numbers 25280004, 26330023, 26280004.


\begin{thebibliography}{1}
\small

\bibitem{AHO04}
R. K. Ahuja, D. S. Hochbaum, and J. B. Orlin, 
A cut-based algorithm for the nonlinear dual of the minimum cost network flow problem, 
{\em Algorithmica} {\bf 39} (2004), 189--208.

\bibitem{AK11}
E. Anshelevich and A. Karagiozova,  
Terminal backup, 3D matching, and covering cubic graphs,  
{\it SIAM Journal on Computing} {\bf 40} (2011), 678--708.

\bibitem{BKM13}
A. Bern{\'a}th, Y. Kobayashi, and T. Matsuoka, 
The generalized terminal backup problem, EGRES, TR-2013-07,
2013 (the extended abstract in SODA'13).

\bibitem{Cher77}
B. V. Cherkasski,  
A solution of a problem of multicommodity flows in a network. 
{\it Ekonomika i Matematicheskie Metody} {\bf 13} (1977), 143--151 (in Russian).

\bibitem{DFL76}
P. M. Dearing, R. L. Francis, and T. J. Lowe,  
Convex location problems on tree networks, 
{\it Operations Research} {\bf 24} (1976), 628--642. 

\bibitem{DJPSY94}
E. Dahlhaus, D. S. Johnson, C. H. Papadimitriou, 
P. D.  Seymour, and M. Yannakakis, 
The complexity of multiterminal cuts,
{\em SIAM Journal on Computing} {\bf 23} (1994), 864--894. 


\bibitem{FT90}
P. Favati and F. Tardella,
Convexity in nonlinear integer programming,
{\em Ricerca Operativa} {\bf 53} (1990), 3--44.


\bibitem{FPTZ10}
P. Felzenszwalb, G. Pap, {\'{E}}. Tardos, and R. Zabih,
Globally optimal pixel labeling algorithms for tree metrics,
In: {\em Proceedings of The Twenty-Third {IEEE} 
Conference on Computer Vision and Pattern Recognition (CVPR'10)} (2010), pp. 3153--3160.

\bibitem{Fukunaga14}
T. Fukunaga, Approximating the generalized terminal backup problem
via half-integral multiflow relaxation, 
In: {\em 32nd International Symposium on 
	Theoretical Aspects of Computer Science (STACS'15)} (2015), pp. 316--328.
{\tt arXiv:1409.5561}.


\bibitem{FujiBook}
S. Fujishige,
{\it Submodular Functions and Optimization, 2nd Edition}, 
Elsevier, Amsterdam, 2005.


\bibitem{Fujishige14}
S. Fujishige,
Bisubmodular polyhedra, simplicial divisions, and discrete convexity,
{\it Discrete Optimization} {\bf 12} (2014) 115--120.


\bibitem{FI05}
S. Fujishige and S. Iwata,  
Bisubmodular function minimization, 
{\it SIAM Journal on Discrete Mathematics} {\bf 19} (2005), 1065--1073.

\bibitem{FM00}
S. Fujishige and K. Murota, 
Notes on L-/M-convex functions and the separation theorems,
{\em Mathematical Programming, Series A} {\bf 88} 
(2000), 129--146.



%
\bibitem{GK97}
A. V. Goldberg and A. V. Karzanov, 
Scaling methods for finding a maximum free multiflow of minimum cost, 
{\it Mathematics of Operations Research} {\bf 22} (1997), 90--109.

\bibitem{GK13}
I. Gridchyn and V. Kolmogorov, 
Potts model, parametric maxflow and $k$-submodular functions, 
In: {\em Proceedings of  IEEE International Conference on Computer Vision (ICCV'13)}, 2013, 
2320--2327. {\tt arXiv:1310.1771}.
%
\bibitem{GLS}
M. Gr\"otschel, L. Lov\'asz, and A. Schrijver,  
{\it Geometric Algorithms and Combinatorial Optimization}, 
Springer-Verlag, Berlin, 1988.


\bibitem{HHMPA}
H. Hirai,
Half-integrality of node-capacitated multiflows 
and tree-shaped facility locations on trees, 
{\em Mathematical Programming, Series A} {\bf 137} (2013), 503--530.

\bibitem{HH13SODA}
H. Hirai,
Discrete convexity and polynomial solvability in minimum 0-extension problems, 
{\em Mathematical Programming, Series A}, to appear
(the extended abstract appeared SODA'13). 

\bibitem{HH_HJ13}
H. Hirai, 
Discrete convexity for multiflows and 0-extensions, 
in: {\em Proceeding of
8th Japanese-Hungarian Symposium on Discrete Mathematics and Its Applications},
2013, pp. 209--223.
%
\bibitem{HHprepar}
H. Hirai, L-convexity on graph structures, in preparation.

\bibitem{Hochbaum02}
D. S. Hochbaum, 
Solving integer programs over monotone inequalities in three variables: 
a framework for half integrality and good approximations, 
{\em European Journal of Operational Research}  {\bf 140} (2002), 291--321.


\bibitem{HK12}
A. Huber and V. Kolmogorov, 
Towards minimizing $k$-submodular functions, 
in: {\em Proceedings of 
the 2nd International Symposium on Combinatorial Optimization (ISCO'12)},  
LNCS 7422, Springer, Berlin, 2012, pp. 451--462.
%

\bibitem{IFF}
S. Iwata, L. Fleischer, and S. Fujishige,
A combinatorial strongly polynomial algorithm 
for minimizing submodular functions,
{\em Journal of the ACM} {\bf 48} (2001), 761--777.

\bibitem{IN09}
S. Iwata and K. Nagano,  
Submodular function minimization under covering constraints,
 In: {\em Proceeding of  
the 50th Annual IEEE Symposium on Foundations of Computer Science (FOCS'09)} (2009), 
pp. 671--680.

\bibitem{IS02}
S. Iwata and M. Shigeno, Conjugate scaling algorithm for Fenchel-type duality 
in discrete convex optimization,
{\em SIAM Journal on Optimization} {\bf 13} (2002), 204--211. 


\bibitem{IWY14}
Y. Iwata, M. Wahlstr\"om, and Y. Yoshida, 
Half-integrality, LP-branching and FPT Algorithms, 
{\tt arXiv:1310.2841}, the conference version appeared in SODA'14.



\bibitem{Kolen}
A. W. J. Kolen, 
{\em Tree Network and Planar Rectilinear Location Theory}, 
CWI Tract 25, Center for Mathematics and Computer Science,
 Amsterdam, 1986.

\bibitem{Kar79}
A. V. Karzanov,  A minimum cost maximum multiflow problem, 
in: {\em Combinatorial Methods for Flow Problems}, 
Institute for System Studies, Moscow, 1979, pp. 138--156 (Russian).

\bibitem{Kar94}
A. V. Karzanov, Minimum cost multiflows in undirected networks, 
{\it Mathematical Programming, Series A} {\bf 66} (1994), 313--324.

\bibitem{Kolmogorov12DAM}
V. Kolmogorov, 
Generalized roof duality and bisubmodular functions, 
{\em Discrete Applied Mathematics} {\bf 160} (2012), 416--426.

\bibitem{Kolmogorov11MFCS}
V. Kolmogorov, 
Submodularity on a tree: Unifying L$^\natural$-convex and bisubmodular functions,
in: {\em Proceedings of the 36th International Symposium on 
Mathematical Foundations of Computer Science (MFCS'11)},
LNCS 6907, Springer, Berlin, 2011, pp. 400--411


\bibitem{KS09}
V. Kolmogorov and A. Shioura, 
New algorithms for convex cost tension problem with application to computer vision, 
{\em Discrete Optimization}  {\bf 6} (2009),  378--393.

\bibitem{KTZ13}
V. Kolmogorov, J. Thapper, and S. \v{Z}ivn\'y, 
The power of linear programming for general-valued CSPs, 
{\em SIAM Journal on Computing} {\bf 44} (2015), 1--36.
%Preprint, 2013,  {\tt arXiv:1311.4219}.


\bibitem{Lov76}
L. Lov\'asz, 
On some connectivity properties of Eulerian graphs, 
{\it Acta Mathematica Academiae Scientiarum Hungaricae} {\bf 28} (1976), 129--138.

\bibitem{MF10}
S. T. McCormick and S. Fujishige,  
Strongly polynomial and fully combinatorial algorithms for bisubmodular function minimization, 
{\it Mathematical Programming, Series A} {\bf 122} (2010), 87--120.

\bibitem{MurotaMPA}
K. Murota, Discrete convex analysis, 
{\em Mathematical Programming} {\bf 83} (1998), 313--371.

\bibitem{MurotaBook}
K. Murota, 
{\it Discrete Convex Analysis}, 
SIAM, Philadelphia, 2003.

\bibitem{MurotaShioura14}
K. Murota and A. Shioura, 
Exact bounds for steepest descent algorithms of L-convex function minimization, 
{\em Operations Research Letters} {\bf 42} (2014), 361--366.


\bibitem{Qi88}
L. Qi, 
Directed submodularity, ditroids and directed submodular flows, 
{\em Mathematical Programming} {\bf 42} (1988), 579--599.



\bibitem{Schrijver}
A. Schrijver,
A combinatorial algorithm minimizing submodular functions 
in strongly polynomial time,
{\em Journal of Combinatorial Theory, Series B} {\bf 80} 
(2000),  346--355. 



\bibitem{TFL83}
B. C. Tansel, R. L. Francis, and T. J. Lowe,  
Location on networks I, II, 
{\em Management Science} {\bf 29} (1983), 498--511.

\bibitem{TZ12FOCS}
J. Thapper and S. \v{Z}ivn\'y,
%
The power of linear programming for valued CSPs, 
in: {\em Proceedings of the 53rd Annual 
IEEE Symposium on Foundations of Computer Science (FOCS'12)}, 2012, pp. 669--678.
%preprint available at {\tt arXiv:1204.1079}. 

\bibitem{XAC08INFOCOM}
D. Xu, E. Anshelevich, and M. Chiang, 
On survivable access network design: complexity and algorithms, 
in:  {\it Proceedings of 
the 27th IEEE International Conference on Computer Communications,
 Joint Conference of the IEEE Computer and Communications Societies (INFOCOM'08)}, 2008, pp. 186--190.
%

\bibitem{Vazirani}
V. V. Vazirani, {\it Approximation Algorithms}, 
Springer-Verlag, Berlin, 2001. 



\end{thebibliography}
\end{document}